\title[A note on HDRFs of level zero]{A note on Higgs-de Rham flows of level zero}
\author[Mao Sheng]{Mao Sheng}
\email{msheng@ustc.edu.cn}
\address{School of Mathematical Sciences,
University of Science and Technology of China, Hefei, 230026, China}
\author[Jilong Tong]{Jilong Tong}
\email{jilong.tong@cnu.edu.cn}
\address{School of Mathematical Sciences and Beijing Advanced Innovation Center for Imaging Theory and Technology, Capital Normal University, Beijing, 100048, China}
\begin{document}
\theoremstyle{plain}
\newtheorem{thm}{Theorem}[section]
\newtheorem{theorem}[thm]{Theorem}
\newtheorem*{theorem*}{Theorem}
\newtheorem{lemma}[thm]{Lemma}
\newtheorem{corollary}[thm]{Corollary}
\newtheorem*{corollary*}{Corollary}
\newtheorem{proposition}[thm]{Proposition}
\newtheorem{addendum}[thm]{Addendum}
\newtheorem{variant}[thm]{Variant}
\theoremstyle{definition}
\newtheorem{lemma and definition}[thm]{Lemma and Definition}
\newtheorem{construction}[thm]{Construction}
\newtheorem{notations}[thm]{Notations}
\newtheorem{question}[thm]{Question}
\newtheorem{problem}[thm]{Problem}
\newtheorem{remark}[thm]{Remark}
\newtheorem{remarks}[thm]{Remarks}
\newtheorem{definition}[thm]{Definition}
\newtheorem{statement}[thm]{Statement}
\newtheorem{claim}[thm]{Claim}
\newtheorem{assumption}[thm]{Assumption}
\newtheorem{assumptions}[thm]{Assumptions}
\newtheorem{properties}[thm]{Properties}
\newtheorem{example}[thm]{Example}
\newtheorem{conjecture}[thm]{Conjecture}
\newtheorem{proposition and definition}[thm]{Proposition and Definition}
\numberwithin{equation}{thm}

\newcommand{\pP}{{\mathfrak p}}
\newcommand{\sA}{{\mathcal A}}
\newcommand{\sB}{{\mathcal B}}
\newcommand{\sC}{{\mathcal C}}
\newcommand{\sD}{{\mathcal D}}
\newcommand{\sE}{{\mathcal E}}
\newcommand{\sF}{{\mathcal F}}
\newcommand{\sG}{{\mathcal G}}
\newcommand{\sH}{{\mathcal H}}
\newcommand{\sI}{{\mathcal I}}
\newcommand{\sJ}{{\mathcal J}}
\newcommand{\sK}{{\mathcal K}}
\newcommand{\sL}{{\mathcal L}}
\newcommand{\sM}{{\mathcal M}}
\newcommand{\sN}{{\mathcal N}}
\newcommand{\sO}{{\mathcal O}}
\newcommand{\sP}{{\mathcal P}}
\newcommand{\sQ}{{\mathcal Q}}
\newcommand{\sR}{{\mathcal R}}
\newcommand{\sS}{{\mathcal S}}
\newcommand{\sT}{{\mathcal T}}
\newcommand{\sU}{{\mathcal U}}
\newcommand{\sV}{{\mathcal V}}
\newcommand{\sW}{{\mathcal W}}
\newcommand{\sX}{{\mathcal X}}
\newcommand{\sY}{{\mathcal Y}}
\newcommand{\sZ}{{\mathcal Z}}
\newcommand{\A}{{\mathbb A}}
\newcommand{\B}{{\mathbb B}}
\newcommand{\C}{{\mathbb C}}
\newcommand{\D}{{\mathbb D}}
\newcommand{\E}{{\mathbb E}}
\newcommand{\F}{{\mathbb F}}
\newcommand{\G}{{\mathbb G}}
\renewcommand{\H}{{\mathbb H}}
\newcommand{\I}{{\mathbb I}}
\newcommand{\J}{{\mathbb J}}
\renewcommand{\L}{{\mathbb L}}
\newcommand{\M}{{\mathbb M}}
\newcommand{\N}{{\mathbb N}}
\renewcommand{\P}{{\mathbb P}}
\newcommand{\Q}{{\mathbb Q}}
\newcommand{\Qbar}{\overline{\Q}}
\newcommand{\R}{{\mathbb R}}
\newcommand{\SSS}{{\mathbb S}}
\newcommand{\T}{{\mathbb T}}
\newcommand{\U}{{\mathbb U}}
\newcommand{\V}{{\mathbb V}}
\newcommand{\W}{{\mathbb W}}
\newcommand{\Z}{{\mathbb Z}}
\newcommand{\g}{{\gamma}}
\newcommand{\bb}{{\beta}}
\newcommand{\as}{{\alpha}}
\newcommand{\Id}{{\rm Id}}
\newcommand{\rk}{{\rm rank}}
\newcommand{\END}{{\mathbb E}{\rm nd}}
\newcommand{\End}{{\rm End}}
\newcommand{\Hom}{{\rm Hom}}
\newcommand{\Hg}{{\rm Hg}}
\newcommand{\tr}{{\rm tr}}
\newcommand{\Sl}{{\rm Sl}}
\newcommand{\Gl}{{\rm Gl}}
\newcommand{\Cor}{{\rm Cor}}

\newcommand{\SO}{{\rm SO}}
\newcommand{\OO}{{\rm O}}
\newcommand{\SP}{{\rm SP}}
\newcommand{\Sp}{{\rm Sp}}
\newcommand{\UU}{{\rm U}}
\newcommand{\SU}{{\rm SU}}
\newcommand{\SL}{{\rm SL}}
\newcommand{\Spec}{\mathrm{Spec}}
\newcommand{\Spf}{\mathrm{Spf}}
\newcommand{\ra}{\rightarrow}
\newcommand{\xra}{\xrightarrow}
\newcommand{\la}{\leftarrow}
\newcommand{\Nm}{\mathrm{Nm}}
\newcommand{\Gal}{\mathrm{Gal}}
\newcommand{\Res}{\mathrm{Res}}
\newcommand{\GL}{\mathrm{GL}}
\newcommand{\Rep}{\mathrm{Rep}}
\newcommand{\Isom}{\mathrm{Isom}}
\newcommand{\GSp}{\mathrm{GSp}}
\newcommand{\Tr}{\mathrm{Tr}}

\newcommand{\bA}{\mathbf{A}}
\newcommand{\bK}{\mathbf{K}}
\newcommand{\bM}{\mathbf{M}} 
\newcommand{\bP}{\mathbf{P}}
\newcommand{\bC}{\mathbf{C}}
\newcommand{\HIG}{\mathrm{HIG}}
\newcommand{\MIC}{\mathrm{MIC}}
\newcommand{\Aut}{\mathrm{Aut}}
\newcommand{\Out}{\mathrm{Out}}

\newcommand{\Fil}{\mathrm{Fil}}
\newcommand{\Gr}{\mathrm{Gr}}
\newcommand{\ttr}{\mathrm{tr}}



\thanks{The first named author is supported by National Natural Science Foundation of China (Grant No. 11622109, No. 11721101) and Anhui Initiative in Quantum Information Technologies (AHY150200). The second named author is supported by One-Thousand-Talents Program of China.}

\begin{abstract} The notion of Higgs-de Rham flows was introduced by Lan-Sheng-Zuo in \cite{LSZ}, as an analogue of Yang-Mills-Higgs flows in the complex nonabelian Hodge theory. In this short note we investigate a small part of this theory, and study those Higgs-de Rham flows which are of level zero.  We improve the original definition of level-zero Higgs-de Rham flows (which works for general levels), and establish a Hitchin-Simpson-type correspondence between such objects and certain representations of fundamental groups in positive characteristic, which generalizes the classical results of Katz \cite{Katz73}. We compare the deformation theories of two sides in the correspondence, and translate the Galois action on the geometric fundamental groups of algebraic varieties defined over finite fields into the Higgs side.
\end{abstract}

\keywords{Higgs-de Rham flows of level zero, representations of fundamental groups, deformations, Galois actions}

\subjclass[2010]{14G17, 14J60}

\maketitle

Let $k$ be a perfect field of positive odd characteristic. Let $W:=W(k)$ denote the ring of Witt vectors, with $K$ its field of fractions and $W_n$ its reduction modulo $p^n$ for every $n\in \mathbb N$. Let $\sX$ be a geometrically connected proper smooth scheme over $W$, and set $X_n:=\sX\otimes_W W_n$. Inspired by the classical Hitchin-Simpson correspondence over the field of complex numbers, the first named author together with Lan and Zuo established in their joint work \cite{LSZ} a correspondence between certain Higgs bundles over $X$ with trivial Chern classes and certain integral crystalline representation of the \'etale fundamental group $\pi_1(\sX_K)$ of the generic fiber $\sX_K=\sX\otimes_W K$ (relative to some geometric base point). In the heart of this correspondence is the notion of \textit{Higgs-de Rham flows} introduced in loc. cit. Recall that, for $n\in \mathbb N$, a Higgs-de Rham flow on $X_n$ is, roughly speaking,  a diagram of the following form
\begin{equation*}
\xymatrix{& (H_0,\nabla_{0})\ar[rd]^{\Gr_{\Fil_{0}}}&     & (H_{1},\nabla_{1})\ar[rd]^{\Gr_{\Fil_{1}}} & \\ (E_0,\theta_0)\ar[ru]^{C_{n}^{-1}} & & (E_1,\theta_1) \ar[ru]^{C_{n}^{-1}}  & & \cdots}.
\end{equation*}
Here $(E_0,\theta_0)$ is certain Higgs bundle on $X_{n}$; $C_{n}^{-1}$ is the truncated inverse Carter transform constructed by Lan-Sheng-Zuo, which lifts the inverse Cartier transform of Ogus-Vologodsky (\cite{OV}); for each integer $i>0$, $\Fil_i=(\Fil_i^jH_i)_{j\geq 0}$ is a decreasing filtration by subbundles on the flat module $(H_i,\nabla_i)$ of level $\leq p-1$ (i.e., $\Fil_i^0H_i=H_i$ and $\Fil_i^{p-1}H_i=0$) satisfying the Griffiths transversality condition; and $(E_i,\theta_i)$, $i\geq 1$, is the graded Higgs module associated with the filtered flat bundle $(H_{i-1}, \nabla_{i-1},\Fil_{i-1})$: see Construction \ref{const:GrFil} below for an explanation. We say that the Higgs-de Rham flow above is $f$-periodic for some integer $f\geq 1$, if it is moreover equipped with an isomorphism of Higgs bundles:
\[
\phi:(E_f,\theta_f)\stackrel{\sim}{\longrightarrow} (E_0,\theta_0).
\]
Based on this notion of $f$-periodic Higgs-de Rham flows and on some previous work of Faltings, Lan-Sheng-Zuo established a correspondence between such objects on $X_n$ and the \textcolor{black}{crystalline} $W_n(\F_{p^f})$-representation of $\pi_1(\sX_K)$, from where they deduced their main Hitchin-Simpson type correspondence mentioned above.

In this short note we shall investigate a small part of the theory of Higgs-de Rham flows. We study only those periodic Higgs-de Rham flows which are \emph{of level zero}, so that the filtrations $\Fil_i$ appearing above satisfy
\[
\Fil_i^0H_i=H_i\quad \textrm{and}\quad
\Fil_i^1H_i=0 \quad \textrm{for every }i\geq 0.
\]
Our first result is a Hitchin-Simpson type correspondence for these objects, which generalizes the classical Katz's correspondence \cite[Proposition 4.1.1]{Katz73} to a global setting.

\begin{theorem*}[Theorem \ref{equivalence in HT zero case}] Let $p\geq 2$ be a prime number. Let $f>0$ be an integer, and $k$ a perfect field of characteristic $p$ containing a finite field $\F_{p^f}$ with $p^f$ elements. Let $n\in \Z_{>0}$, and \textcolor{black}{$X_n$ be a smooth connected scheme} over $W_n:=W_n(k)$. Set $X:=X_n\otimes_W k$. Then there is an equivalence of categories between the category of continuous representations of $\pi_1(X)$ on finite free $W_n(\F_{p^f})$-modules, and the category of $f$-periodic Higgs-de Rham flows of level zero over $X_n$.
\end{theorem*}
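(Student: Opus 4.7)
\emph{Strategy.} The plan is to reduce an $f$-periodic level-zero Higgs-de Rham flow on $X_n$ to a vector bundle on $X_n$ equipped with an $f$-fold Frobenius descent structure, and then to invoke a Katz-style equivalence to pass to continuous $W_n(\F_{p^f})$-representations of $\pi_1(X)$.

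\emph{Step 1: collapse of the flow in level zero.} The level-zero condition $\Fil_i^0 H_i = H_i$, $\Fil_i^1 H_i = 0$ forces $\Gr_{\Fil_i}(H_i,\nabla_i) = (H_i, 0)$, so every Higgs bundle appearing in the flow has vanishing Higgs field. For a Higgs bundle $(E,0)$ with zero Higgs field, the truncated inverse Cartier transform $C_n^{-1}$ of Lan-Sheng-Zuo reduces to the pullback along the chosen (local) lift of Frobenius equipped with its canonical Frobenius-descent connection; this is the $W_n$-analogue of the classical fact that the Ogus-Vologodsky inverse Cartier transform sends $(E,0)$ to $F^*E$ with its canonical flat connection. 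Consequently, an $f$-periodic level-zero flow on $X_n$ becomes the datum of a single vector bundle $E_0$ on $X_n$ together with an isomorphism $\phi : F^{*f} E_0 \xrightarrow{\sim} E_0$, each intermediate $(E_i,\theta_i)$ and $(H_i,\nabla_i)$ being recovered from $E_0$ by canonical iterated Frobenius pullback (with zero Higgs field, respectively canonical connection).

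\emph{Step 2: Katz-style correspondence.} Pairs $(E,\phi)$ as above are classical unit-root Frobenius modules of period $f$ on $X_n$. Katz's theorem \cite[Proposition 4.1.1]{Katz73} handles the case $n=f=1$, producing the equivalence with continuous $\F_p$-representations of $\pi_1(X)$. The extension to arbitrary $f$ is Galois descent for the Frobenius structure: factoring $\phi$ as the $f$-fold iterate of a single Frobenius structure upgrades the associated $\F_p$-local system to one valued in finite free $W_n(\F_{p^f})$-modules. The extension from $n=1$ to general $n$ is a routine inductive dévissage on $n$ that uses the identification $\pi_1(X_n) = \pi_1(X)$ coming from the fact that $X$ and $X_n$ share the same étale site. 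Composing with Step 1 gives the claimed equivalence on objects, and the compatibility with morphisms is formal, since a morphism of level-zero Higgs-de Rham flows is exactly a morphism of vector bundles intertwining $\phi$.

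\emph{Main obstacle.} The crucial technical input is the precise identification in Step 1: that $C_n^{-1}$, when restricted to Higgs bundles with zero Higgs field, is literally Frobenius pullback with canonical connection, and that this identification is functorial and compatible with iteration in the flow. Establishing it requires opening up the Lan-Sheng-Zuo construction on the level-zero locus. A secondary but essential subtlety is that $C_n^{-1}$ is built from local Frobenius lifts which in general do not glue on $X_n$; one must check that the $f$-periodicity isomorphism $\phi$ descends these local Frobenius-pullback data to a well-defined global Frobenius structure, independent up to canonical isomorphism of the choices, mirroring the patching role played by periodicity throughout \cite{LSZ}.
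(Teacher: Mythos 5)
Your Step 1 and Step 2 reproduce the paper's argument only in the presence of a \emph{global} Frobenius lifting $F:X_n\to X_n$: in that case the reduction of level-zero flows to pairs $(E,\phi_F:F^{*f}E\xrightarrow{\sim}E)$ is Proposition \ref{Frobenius-periodic bundles}, and \cite[Proposition 4.1.1]{Katz73} already covers all $n$ and $f$ once such a lift is fixed, so neither a d\'evissage in $n$ nor a ``factorization of $\phi$ as an $f$-fold iterate'' (which is not available in general, and not what Katz uses) carries any weight. But for $n\geq 2$ a global Frobenius lift rarely exists (never for a smooth proper curve of genus $\geq 2$ over $W_n$), and removing that hypothesis is exactly the point of the theorem. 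Consequently your assertion that an $f$-periodic level-zero flow ``becomes the datum of a single vector bundle $E_0$ together with $\phi:F^{*f}E_0\xrightarrow{\sim}E_0$'' has no global meaning in general, and what you defer to as a ``secondary but essential subtlety'' is in fact the entire content of the general case; your proposal names it but supplies no argument for it, which is a genuine gap.

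Moreover the goal is misstated: the periodicity isomorphism does not descend the local data to ``a well-defined global Frobenius structure'' (none exists); what glues is the local system of $\phi$-invariants. The paper's mechanism, absent from your proposal, is Lemma \ref{lem:existence-of-connection}: the flow endows $E$ with a canonical connection $\nabla$ (transported from $\nabla_{f-1}$ on $H_{f-1}=E_f$ through $\phi$); for two local liftings $F,F'$ the semilinear structures $\phi_F,\phi_{F'}$ differ by the Taylor-series isomorphism built from $\nabla$; and --- the key computation --- any local section $x$ with $\phi_F(x)=x$ is automatically flat, since $\nabla(x)=(\phi_F\otimes dF)^N(\nabla(x))$ for all $N$ while $dF\equiv 0 \bmod p$. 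Hence $E^{\phi_F=1}=E^{\phi_{F'}=1}$, the locally defined finite \'etale $W_n(\F_{p^f})$-module schemes glue over overlaps, and one obtains a representation of $\pi_1(X_n)=\pi_1(X)$; the same independence statement is needed a second time to glue the locally defined flows when constructing the quasi-inverse from a representation, which your ``compatibility with morphisms is formal'' glosses over. A smaller inaccuracy in Step 1: for $n\geq 2$, $C_n^{-1}(E,0)$ is $F^*E$ equipped with a connection that depends on the mod $p^{n-1}$ flow through $\bar\phi$ (see Lemma \ref{flat basis}), not the ``canonical Frobenius-descent connection''; this is harmless for Proposition \ref{Frobenius-periodic bundles} but is precisely the structure one must track when comparing different local liftings.
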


Compared with the original definition of Higgs-de Rham flows in \cite{LSZ}, in the statement above we removed the restriction on the characteristic of the field $k$ and an extra liftability condition on $X_n$: the reason is simply because here we are only interested in periodic Higgs-de Rham flows \emph{of level zero}: see \S~\ref{subsection:HDRF0} for a detailed discussion. 

\textcolor{black}{If $X$ is a smooth connected $k$-scheme which is lifted to a smooth $p$-adic formal scheme $\mathcal X$ over $W$, by a limit argument, we obtain from the theorem above a correspondence between the category of continuous representations of $\pi_1(X)$ on finite free $W(\F_{p^f})$-modules, and the category of $f$-periodic Higgs-de Rham flows of level zero over $\mathcal X$ (Corollary \ref{cor:HT-equivalence-in-formal-case}). }If moreover $\mathcal X$ is the formal completion along the closed fiber of a proper smooth $W$-scheme, we check in Proposition \ref{factor through specialization map Part 1} that our Hitchin-Simpson type correspondence above is compatible with the one for Higgs-de Rham flows of general levels given by Lan-Sheng-Zuo in \cite{LSZ}.

\begin{corollary*}[Corollary \ref{obstruction space}, Corollary \ref{deformation space}]Use the notation as above and suppose moreover that the base field $k$ is algebraically closed and that $X/k$ is proper. Assume also that $X_{n}$ can be lifted to a proper smooth $W_{n+1}$-scheme $X_{n+1}$. Let $\rho: \pi_1(X)\to \mathbf{GL}_d(\Z/p^{n})$ be a continuous representation and $(E,\Fil_{tr},\phi)$ the corresponding one-periodic Higgs-de Rham flow of level zero over $X_n$. Then the following statements hold:
\begin{itemize}
\item [(i)] $\rho$ can be deformed to a continuous $\mathbf{GL}_d(\Z/p^{n+1})$-representation if and only if $E$ can be deformed to $X_{n+1}$ as vector bundle.
\item [(ii)] Suppose that the deformation obstruction of $\rho$ over $\mathbf{GL}_d(\Z/p^{n+1})$ vanishes. Then the set of isomorphism classes of deformations of $\rho$ as a $H^1(\pi_1(X),ad(\rho))$-torsor, is naturally identified with a set of isomorphism classes of deformations of $E$ over $X_{n+1}$ as a $H^1(X,\mathcal End(\bar E))^{\Phi=1}$-torsor, where $\bar E=E\otimes k$ and $\Phi$ is the induced Frobenius on the endomorphism bundle $\mathcal End(\bar E)$ from $\phi$.
\end{itemize}
\end{corollary*}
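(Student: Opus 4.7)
The plan is to translate both deformation problems through the equivalence of the preceding theorem, then match the two sides via an Artin--Schreier-type sequence combined with Lang's theorem.

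First, since the filtration is trivial, the flow $(E,\Fil_{tr},\phi)$ amounts to a locally free sheaf $E$ on $X_n$ equipped with an isomorphism $\phi$ produced by the truncated inverse Cartier transform of $(E,0)$; its reduction modulo $p$ is the pair $(\bar E,\bar\phi)$ corresponding, by the $n=1$ case of the theorem, to $\bar\rho:=\rho\bmod p$. On the étale site of $X$ this yields the Artin--Schreier-type short exact sequence
\[
0\longrightarrow\mathcal End(\bar E)^{\Phi=1}\longrightarrow\mathcal End(\bar E)\xrightarrow{\,1-\Phi\,}\mathcal End(\bar E)\longrightarrow 0,
\]
whose kernel is the étale local system attached to $\mathrm{ad}(\bar\rho)$. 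Because $X$ is proper and smooth over the algebraically closed field $k$, each $H^i(X,\mathcal End(\bar E))$ is a finite-dimensional $k$-vector space on which the $\sigma$-semilinear operator $1-\Phi$ is surjective, by Lang's theorem applied to the additive group. Taking the long exact sequence and applying this surjectivity in degrees $0$ and $1$ gives canonical isomorphisms
\[
H^i(\pi_1(X),\mathrm{ad}(\bar\rho))\;\cong\;H^i(X,\mathcal End(\bar E))^{\Phi=1}\qquad(i=1,2),
\]
where the right-hand side is the kernel of $1-\Phi$ on $H^i(X,\mathcal End(\bar E))$.

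For (i), the forward direction is formal: a lift $\tilde\rho$ corresponds, via the theorem applied over $X_{n+1}$, to a flow lift $(\tilde E,\tilde\phi)$, and $\tilde E$ is the required bundle lift. The converse is the heart of the argument. Given any bundle lift $\tilde E$ of $E$ to $X_{n+1}$, I would compute the obstruction $\beta(\tilde E)\in H^1(X,\mathcal Hom(F^*\bar E,\bar E))\cong H^1(X,\mathcal End(\bar E))$ (the last identification via $\bar\phi$) to the existence of a compatible $\tilde\phi$; local lifts of $\phi$ always exist, and $\beta(\tilde E)$ is the usual gluing cocycle. A Čech computation in the deformation theory of morphisms of vector bundles then yields the transformation law
\[
\beta(\tilde E+\alpha)\;=\;\beta(\tilde E)+(\Phi-1)(\alpha)
\]
under the torsor action of $\alpha\in H^1(X,\mathcal End(\bar E))$ on bundle lifts, the $\Phi$ contribution recording how $C_{n+1}^{-1}$ — at level zero essentially Frobenius pullback with its canonical connection — acts on the deformation class. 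Since $1-\Phi$ is surjective on $H^1$, one can choose $\alpha$ killing $\beta(\tilde E)$, thereby producing a flow lift and hence, by the equivalence, a deformation $\tilde\rho$.

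For (ii), assume the obstruction vanishes. By Mazur-style deformation theory, isomorphism classes of lifts $\tilde\rho$ form a torsor under $H^1(\pi_1(X),\mathrm{ad}(\rho))$, and by Grothendieck's theory, isomorphism classes of bundle lifts $\tilde E$ form a torsor under $H^1(X,\mathcal End(\bar E))$. Passing the first through the theorem and forgetting $\tilde\phi$ defines a map between these torsors, and the transformation formula above shows that this map is equivariant for the inclusion $H^1(\pi_1(X),\mathrm{ad}(\rho))\cong H^1(X,\mathcal End(\bar E))^{\Phi=1}\hookrightarrow H^1(X,\mathcal End(\bar E))$ identified in the first paragraph. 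A torsor map that is equivariant for a subgroup inclusion is automatically injective, so the image is the asserted sub-$H^1(X,\mathcal End(\bar E))^{\Phi=1}$-torsor of isomorphism classes of $\tilde E$. The main obstacle is the transformation formula $\beta(\tilde E+\alpha)=\beta(\tilde E)+(\Phi-1)(\alpha)$: establishing it requires unpacking how the truncated inverse Cartier transform $C_{n+1}^{-1}$, which at level zero is essentially Frobenius pullback together with the canonical connection, interacts at the level of Čech cocycles with the torsor action of $H^1(X,\mathcal End(\bar E))$ on bundle lifts of $E$. Once this Frobenius-semilinearity is made explicit on cohomology, both (i) and (ii) drop out of the formal setup above.
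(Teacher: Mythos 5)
Your treatment of part (i) is essentially correct and in fact reorganizes the paper's argument in a reasonable way: instead of comparing obstruction classes (the identity $\alpha^2(\mathrm{ob}(\rho))=\mathrm{ob}(E)$ of Proposition \ref{prop:obstruction-class}) and concluding by injectivity of $\alpha^2$, you start from an arbitrary bundle lift $\tilde E$ and use the transformation law $\beta(\tilde E+\alpha)=\beta(\tilde E)+(\Phi-1)(\alpha)$ --- which is exactly Proposition \ref{prop:obstruction-class-when-liftable}(2) --- together with surjectivity of $1-\Phi$ on $H^1(X,\mathcal{E}nd(\bar E))$ (the SGA7 statement the paper cites; ``Lang'' is a harmless relabelling) to correct $\tilde E$ so that $\phi$ lifts. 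This route even sidesteps the \'etale/hypercovering refinement the paper needs when it compares $\beta^1(v)$ with $\mathrm{ob}(\rho)$. The one unproved ingredient, the \v{C}ech computation of the transformation law, you isolate correctly, and it is the short computation in the paper. (Minor point: your asserted isomorphism $H^2(\pi_1(X),\mathrm{ad}(\bar\rho))\cong H^2(X,\mathcal{E}nd(\bar E))^{\Phi=1}$ is in general only an injection of $H^2(\pi_1(X),-)$ into \'etale $H^2$, but you never use it.)

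The genuine gap is in part (ii). The equivariance of the map $\tilde\rho\mapsto[\tilde E]$ with respect to the inclusion $H^1(\pi_1(X),\mathrm{ad}(\bar\rho))\cong H^1(X,\mathcal{E}nd(\bar E))^{\Phi=1}\hookrightarrow H^1(X,\mathcal{E}nd(\bar E))$ does \emph{not} follow from the displayed transformation law. That law only describes how the obstruction to deforming $\phi$ varies as the bundle lift moves; it identifies the image of the forgetful map as $\{\beta=0\}$ and shows that this set, when non-empty, is a torsor under $\ker(1-\Phi)$. It says nothing about how the Mazur torsor action on deformations of $\rho$ is transported, through the correspondence of Theorem \ref{equivalence in HT zero case}, to the torsor action on deformations of $E$; that is a separate cocycle comparison --- precisely $\alpha^1(w_{\mathrm{Gal}})=w_{\mathrm{coh}}$, i.e.\ Proposition \ref{prop:def-spaces-and-auto}(1), proved by trivializing the two local systems on a common finite \'etale cover and tensoring the difference $1$-cocycle with $\mathcal O$. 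Without it you obtain neither your injectivity claim (which you derive from equivariance) nor the compatibility of the two torsor structures, and the latter is part of statement (ii). Injectivity alone could be rescued differently: any two Frobenius structures $\tilde\phi$ on a fixed bundle lift are isomorphic as deformations because $1-\Phi$ is surjective on $H^0(X,\mathcal{E}nd(\bar E))$, which is the route through Proposition \ref{prop:def-spaces-and-auto}(2)--(3); but the compatibility assertion still requires the equivariance computation. Once you add this one further cocycle lemma, the rest of your argument for (ii) goes through.
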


In the theorem above, if we fix an algebraic closure $\bar k$ of $k$ and apply the result to the base change $X_n\otimes_{W_n} W_n(\bar k)$ of $X_n$ to $W_n(\bar k)$, we obtain a Hitchin-Simpson type correspondence for the representations of the geometric fundamental group $\pi_1^{geo}(X):=\pi_1(X\otimes_k\bar k)$ of $X$. On the other hand, it is well-known that $\pi_1^{geo}(X)$ is endowed naturally with an (outer) action of the absolute Galois group $\mathrm{Gal}(\bar k/k)$ of $k$. Motivated by questions arising from anabelian geometry, we would like to have an algebro-geometric description of this (outer) Galois action. When $k$ is a finite field, we achieve in \S~\ref{section:Galois-action} such a description in terms of periodic Higgs-de Rham flows of level zero:

\begin{corollary*}[Corollary \ref{main result}] Let $k\subset \bar{\mathbb F}_p$ be a finite field of $q=p^m$ elements. Let $X_n$ be a smooth connected scheme over $W_n$, with $n\in \Z_{>0}$. Set $X:=X_n\otimes_W k$. Let $(E,0,\Fil_{tr},\ldots, \Fil_{tr},\phi)$ be an $f$-periodic Higgs-de Rham flow of level zero on $X_n\hat{\otimes}_{W_n}W_n(\bar{\F}_{p})$, with $\rho$ the corresponding $W_{n}(\mathbb F_{p^f})$-representation of $\pi_1^{geo}(X)$ given by the Hitchin-Simpson type correspondence above. Let $\sigma\in \mathrm{Gal}(\bar{\mathbb F}_p/k)$ which sends $a\in \bar{\mathbb F}_p$ to $a^{q}$.
Then, the $f$-periodic Higgs-de Rham flow corresponding to $\rho\circ \tilde{\sigma}$ is $(1\otimes\sigma)^*(E,0,\Fil_{tr},\ldots,\Fil_{tr},\phi)$ shifted $Nf-m$ times, where $N\in \mathbb N$ such that $Nf-m\geq 0$. Here $\tilde{\sigma}$ is an automorphism on $\pi_1^{geo}(X)=\pi_1(X\otimes_k\bar{\F}_p)$ induced from the isomorphism $1\otimes \sigma:X_n\stackrel{\sim}{\ra}X_n$ of schemes.
\end{corollary*}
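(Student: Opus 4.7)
The plan is to leverage the Hitchin-Simpson-type equivalence (Theorem \ref{equivalence in HT zero case}) applied to the base-change $Y_n := X_n \hat{\otimes}_{W_n} W_n(\bar{\F}_p)$, which yields a natural bijection between $f$-periodic level-zero Higgs-de Rham flows on $Y_n$ and continuous $W_n(\F_{p^f})$-representations of $\pi_1^{geo}(X)$. Both $(1\otimes\sigma)^*$ and the shift operation are auto-equivalences of the flow category, so the task is to identify the induced auto-equivalences on the representation side and match them up so that their composition recovers $\rho \mapsto \rho \circ \tilde{\sigma}$.

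The first ingredient is a shift–Frobenius compatibility: under the equivalence, shifting an $f$-periodic flow by one step corresponds to post-composing the representation with the Witt Frobenius $\Phi$ on $W_n(\F_{p^f})$ (the direction of $\Phi$ versus $\Phi^{-1}$ is immaterial here, since one can compensate by choosing $N$). This should fall out from inspecting the construction of the correspondence: the transition $(E_i,0) \rightsquigarrow (E_{i+1},0)$ in a level-zero flow is essentially a Frobenius pullback of the underlying bundle, while the $W_n(\F_{p^f})$-module structure on the associated representation arises from the Frobenius-fixed subring inside $W_n(\bar{\F}_p)$ acting on $\phi$-invariant sections over a pro-\'etale cover, so a cyclic shift of the flow is exchanged with a twist of the coefficient action by $\Phi$.

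The second ingredient is the analysis of $(1\otimes\sigma)^*$. Since the absolute Frobenius on $Y = Y_n\otimes k$ is functorial in the scheme, $F_Y$ and $1\otimes\sigma$ commute, so $(1\otimes\sigma)^*$ sends $f$-periodic level-zero flows to $f$-periodic level-zero flows. Fix a geometric base point $\bar y$: the stalk of $(1\otimes\sigma)^*E$ at $\bar y$ is the stalk of $E$ at $(1\otimes\sigma)(\bar y)$, and transport along a chosen path identifies the $\pi_1^{geo}(X)$-action with the original one precomposed by $\tilde{\sigma}$. However, the $W_n(\F_{p^f})$-structure on the pulled-back $\phi$-invariant module is twisted: because $\sigma$ restricts to the $m$-th iterate $\Phi^m$ of the Witt Frobenius on $W_n(\F_{p^f}) \subset W_n(\bar{\F}_p)$, the pullback operation twists the target action by $\Phi^m$. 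Consequently $(1\otimes\sigma)^*(E,\phi)$ corresponds to the representation $\Phi^m \circ (\rho \circ \tilde{\sigma})$.

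Combining the two ingredients, shifting $(1\otimes\sigma)^*(E,\phi)$ by $Nf - m$ steps corresponds on the representation side to a further post-composition with $\Phi^{Nf-m}$, producing $\Phi^{Nf-m} \circ \Phi^m \circ (\rho \circ \tilde{\sigma}) = \Phi^{Nf} \circ (\rho \circ \tilde{\sigma}) = \rho \circ \tilde{\sigma}$, since $\Phi^f$ is the identity on $W_n(\F_{p^f})$. The main obstacle will be to pin down the precise exponent of $\Phi$ in the coefficient twist introduced by $(1\otimes\sigma)^*$; this requires a careful tracking through the Hitchin-Simpson-type correspondence for a base change that is non-linear over $W_n(\bar{\F}_p)$, in particular paying attention to how $\Phi$ acts on the subring $W_n(\F_{p^f}) \subset W_n(\bar{\F}_p)$ relative to $\sigma$. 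A useful sanity check is the case $f \mid m$, in which $\sigma$ acts trivially on $W_n(\F_{p^f})$, the Frobenius twist vanishes, the shift becomes a multiple of $f$ (hence trivial on $f$-periodic flows), and the statement collapses to the expected equality $(1\otimes\sigma)^*(E,\phi) \leftrightarrow \rho \circ \tilde{\sigma}$.
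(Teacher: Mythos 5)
Your proposal is correct and follows essentially the same route as the paper: your two ingredients are exactly Proposition \ref{right action in the finite field case}, namely that the shift corresponds to twisting coefficients by the Witt Frobenius (part (2), via $\sigma_0$) and that $(1\otimes\sigma)^*$ of the flow corresponds to ${}^{\sigma}\rho^{\sigma}=\Phi^m\circ\rho\circ\tilde{\sigma}$ (part (1)), after which the corollary is the same bookkeeping $\Phi^{Nf-m}\circ\Phi^m=\mathrm{id}$ on $W_n(\F_{p^f})$. The paper verifies these two ingredients by the explicit Katz-style quotient description $E=(M\otimes\mathcal O_Y)/\Aut(Y/\bar X_n)$ and compatibility of Frobenius liftings, which is the detailed version of the tracking you correctly flag as the remaining work.
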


Even though we do not have any interesting application of this simple reformulation of Galois action at the moment, we feel that it may still shed light on the study of the outer Galois representations of algebraic varieties over finite fields.

{\bf Acknowledgement.} We would like to thank Professor Kang Zuo for his interest in this work. We are very grateful to the two anonymous referees for their insightful comments and helpful suggestions. Last but not least, we express our sincere thanks to our editor for his professional work.

\if false

Let $k$ be a field, and $k^{sep}$ a separable closure of $k$. Write $G_k=\mathrm{Gal}(k^{sep}/k)$ for the absolute Galois group of $k$. Let $X$ be a geometrically connected smooth variety over $k$. Set $\bar X=X\times k^{sep}$, and let $\bar x$ be a $k^{sep}$-point of $X$. Let
\begin{equation}\label{eq:outer}
\alpha=\alpha_{X/k}: G_k\to  \Out(\pi_1(\bar X, \bar x))=\Aut(\pi_1(\bar{X},\bar x))/\mathrm{Inn}(\pi_1(\bar X,\bar x))
\end{equation}
be the outer Galois representation determined by the short exact sequence of profinite groups below
$$
1\to \pi_1(\bar X, \bar x)\to \pi_1(X,\bar x)\to G_k\to 1.
$$
In the following, we shall ignore the choice of base point and write for simplicity $\pi_1^{geo}(X)=\pi_1(\bar X, \bar x)$. It is a fundamental
question in arithmetic geometry to understand the outer action $\alpha$ of $G_k$ on $\pi_1^{geo}(X)$. In this note, we propose a way to study $\alpha$, natural from the nonabelian Hodge theoretical point of view.
Let $R$ be a topological commutative ring. Let
\[
\rho:\pi_1^{geo}(X)\longrightarrow \mathrm{GL}_r(R)
\]
be an object in the category $\Rep_R(\pi_1^{geo}(X))$ of $R$-linear representations of $\pi_1^{geo}(X)$ in finite free $R$-modules. Let $\sigma\in G_k$, and $\tilde{\sigma}\in \Aut(\pi_1^{geo}(X))$ a lifting of $\alpha(\sigma)\in \Out(\pi_1^{geo}(X))$. Consider the representation
\[
\rho\circ \tilde{\sigma}: \pi_1^{geo}(X)\longrightarrow \mathrm{GL}_r(R).
\]
Since the difference between two liftings of $\alpha(\sigma)$ is an inner automorphism of $\pi_1^{geo}(X)$, the isomorphism class $[\rho\circ \tilde{\sigma}]$ of $\rho\circ \tilde{\sigma}$ does not depend on the choice of $\tilde{\sigma}$. By abuse of notation, let us denote this isomorphism class by $\rho\circ \alpha(\sigma)$.  Then, the outer Galois representation \eqref{eq:outer} induces a right action of $G_k$ by $\alpha$ on (the set of isomorphism classes of)
$\Rep_R(\pi_1^{geo}(X))$ by sending $\rho$ to
$$
\rho^{\sigma}:=\rho\circ \alpha(\sigma).
$$
On the other hand, if we are given an action of $G_k$ on $R$, i.e., a continuous morphism $\phi: G_k\to \Aut(R)$, there is an induced left action of $G_k$ on $\Rep_R(\pi_1^{geo}(X))$ by mapping $\rho$ to
\begin{equation}
^{\sigma}\rho:=\beta(\sigma)\circ \rho,
\end{equation}
where $\beta(\sigma)$ is the induced Galois action of $\phi(\sigma)$ on $\GL_r(R)$. As strongly suggested by the $p$-adic Hodge theory, one hopes that, when the pair $(R,\phi)$ as above is chosen properly, there exist some nice algebro-geometric parametrization of the category $\Rep_R(\pi_1^{geo}(X))$ (in terms of certain Higgs bundles for example) and some coupling between these two Galois actions, so that we can deduce knowledge of outer Galois representation \eqref{eq:outer} from the diagonal action
\[
\Rep_R(\pi_1^{geo}(X))\ni\rho\mapsto \sigma^{*}(\rho):={}^{\sigma^{-1}}\rho ^{\sigma}
\]
of $G_k$ on representations of the geometric fundamental groups.

Our study in the case of finite fields is based on the notion of Higgs-de Rham flows introduced in the joint work \cite{LSZ} of the first named author with G. Lan and K. Zuo. Our first result is a generalization of Katz's correspondence \cite[Proposition 4.1.1]{Katz73} to a global setting.
\begin{theorem*}[Theorem \ref{equivalence in HT zero case} and Corollary \ref{cor:HT-equivalence-in-formal-case}]
Let $n\in \Z_{>0}\cup \{\infty\}$. Let $k$ be a perfect field of characteristic $p$ and $\sX$ a smooth connected formal scheme over $W_n:=W_n(k)$. Then, for each integer $f>0$, there is an equivalence of categories between the category $\mathrm{Rep}_{W_n(\F_{p^f})}(\pi_1(\sX))$ and the category $\mathrm{HDR}_{0,f}(\sX/W_n)$ of $f$-periodic HDRFs of level zero over $\sX$.
\end{theorem*}
For a precise definition of the category $\mathrm{HDR}_{0,f}(\sX/W_n)$, we refer the readers to \S~\ref{subsection:HDRF0}. It is shown in Proposition \ref{Frobenius-periodic bundles} that when there is a Frobenius lifting over $\sX$, the category $\mathrm{HDR}_{0,f}(\sX/W_n)$ is nothing but the category of Frobenius-periodic vector bundles of period $f$.

Our main result on the outer Galois representation on the geometric fundamental group of an algebraic variety over a finite field is the following
\begin{theorem*}[Theorem \ref{main result}] Let $k\subset \bar{\mathbb F}_p$ be a finite field of $q=p^m$ elements and $X_n$ a smooth connected formal scheme over $W_n(k)$ for some $n\in \Z_{>0}\cup \{\infty\}$. Set $X=X_n\times k$. Let $(E,0,\Fil_{tr},\ldots, \Fil_{tr},\phi)$ be an $f$-periodic HDRF of level zero on $X_n$, with $\rho$ the corresponding $W_{n}(\mathbb F_{p^f})$-representation of $\pi_1^{geo}(X)$. Let $\sigma\in \mathrm{Gal}(\bar{\mathbb F}_p/k)$ be the topological generator sending $a\in \bar{\mathbb F}_p$ to $a^{q}$.
Then, the periodic HDRF corresponding to $\rho^{\sigma}$ is $\sigma^*(E,0,\Fil_{tr},\ldots,\Fil_{tr},\phi)$ shifted $Nf-m$ times, where $N\in \mathbb N$ such that $Nf-m\geq 0$.
\end{theorem*}

\fi

\if false
\section{The case of finite fields}
Let $k=\mathbb F_q\subset \bar \F_p$ be a finite field of characteristic $p$, and $\sigma\in \mathrm{Gal}(\bar \F_p/k)$ the Frobenius $k$-automorphism of $\bar \F_p$: so $\sigma(a)=a^q$ for any $a\in \bar \F_p$. Let $X$ be a geometrically connected smooth variety over $k$. In this section, we want to find an algebro-geometric description of of the (outer) action of $\sigma$ on $\pi_1^{geo}(X)$. Our main tool is the $p$-torsion Hitchin-Simpson correspondence established in \cite{LSZ}.

\fi

\section{Periodic HDRFs of level zero}\label{subsection:HDRF0} Let $k$ denote a perfect field of positive characteristic $p$. Let $n>0$ be an integer and set $W_n:=W_n(k)$. Let $X_n$ be a smooth scheme over $W_n$. Write $X_m:=X_n\otimes_{W_n}W_m$ for every $1\leq m\leq n$. In this section, we shall review the definition of periodic \emph{Higgs-de Rham flow}, written HDRF for short in the sequel, of level zero introduced in \cite{LSZ}, which will be central in our discussion. To do so, assume as in \cite{LSZ} the following condition:
\begin{itemize}
\item[(*)] \emph{$\mathrm{Char}(k)=p\geq 3$, and $X_{n}$ can be lifted to a smooth $W_{n+1}$-scheme $X_{n+1}$}.
\end{itemize}
We shall see in Remark \ref{rem:remove-extra-assumptions} that, if we are only interested in the level-zero case, the condition (*) above can be removed.

Let us start with the following construction used in \cite{LSZ}.

\begin{construction}\label{const:GrFil} Let $X/S$ be a smooth $S$-scheme, and $H$ a flat bundle on $X$: so $H$ is a vector bundle on $X$, equipped with an integrable connection
\[
\nabla:H\longrightarrow H\otimes \Omega_{X/S}^1.
\]
Let $\Fil^{\bullet}H=(\Fil^i H)_{i\in \mathbb Z}$ be a decreasing separated and exhaustive filtration by subbundles of $H$, satisfying the \textit{Griffiths transversality} condition, i.e.,
\[
\nabla(\Fil^iH)\subset \Fil^{i-1}H\otimes\Omega_{X/S}^1, \quad \forall \ i\in \mathbb Z.
\]
The filtered flat bundle $(H,\nabla,\Fil^{\bullet} H)$ is also referred to as a \emph{de Rham bundle} on $X/S$ in the following. Write $\Gr_{\Fil^{\bullet}}(H)$ for the associated graded $\mathcal{O}_{X}$-module of $H$ relative to the filtration $\Fil^{\bullet}H$, so that
\[
\Gr_{\Fil^{\bullet}}(H)=\bigoplus_{i\in \mathbb Z}\Gr_{\Fil^{\bullet}}^i(H), \quad \textrm{with }\Gr_{\Fil^{\bullet}}^i(H):=\Fil^iH/\Fil^{i+1}H.
\]
Because of the Griffiths transversality condition, $\nabla$ induces an $\mathcal{O}_{X}$-linear map
\[
\theta: \Gr_{\Fil^{\bullet}}(H)\longrightarrow \Gr_{\Fil^{\bullet}}(H)\otimes \Omega_{X/S}^1,
\]
taking the class in $\Gr_{\Fil^{\bullet}}^i(H)$ of a local section $x$ in $\Fil^iH$ to the class of $\nabla(x)$ in $\Gr_{\Fil^{\bullet}}^{i-1}(H)\otimes \Omega_{X/S}^1$. In this way, $(\Gr_{\Fil^{\bullet}}(H),\theta)$ becomes a (graded) Higgs bundle. For example, if we take the \textit{trivial filtration} $\Fil_{tr}$ on $H$:
\[
\Fil_{tr}^0H=H \quad \textrm{and}\quad \Fil_{tr}^1H=0,
\]
then $\Gr_{\Fil_{tr}}(H)=H$ and the resulting Higgs field $\theta$ on $\Gr_{\Fil_{tr}}(H)$ is trivial. So $\Gr_{\Fil_{tr}}(H)$ is simply $H$, viewed as a Higgs bundle with trivial Higgs field on $X$.
\end{construction}

\begin{definition}[\cite{LSZ}] We keep the notations above. Let $f\in \mathbb Z_{>0}$.

\begin{enumerate}
\item An \emph{$f$-periodic HDRF of level zero} over $X_1/W_1$ is a diagram of the form
\begin{equation}\label{eq:HDRF-over-X1}
\xymatrix{& \textcolor{black}{(H_0,\nabla_{0})}\ar[rd]^{\Gr_{\Fil_{tr}}}&     & (H_{f-1},\nabla_{f-1})\ar[rd]^{\Gr_{\Fil_{tr}}} & \\ (E_0,0):=(E,0)\ar[ru]^{C_{1}^{-1}} & & \cdots \ar[ru]^{C_{1}^{-1}}  & & (E_{f},0)\ar@/^1.5pc/[llll]_{\phi}};
\end{equation}
Here,
\begin{itemize}
\item $(E,0)$ is a Higgs bundle with trivial Higgs field over $X_1$;
\item $C_1^{-1}$ is the inverse Cartier transform of Ogus-Vologodsky (\cite{OV});
\item $\Gr_{\Fil_{tr}}$ stands for taking the associated graded $\mathcal O_{X_1}$-module relative to the trivial filtration $\Fil_{tr}$ as recalled in Construction \ref{const:GrFil}: \textcolor{black}{in particular, if we denote by $F_{X_1}$ the absolute Frobenius on $X_1$, for each $0\leq i\leq f-1$, we have $
H_i=F_{X_1}^*E_i$, $\nabla_i$ is the canonical connection on a Frobenius pullback, and $E_{i+1}=H_i$; }
\item $\phi:(E_f,0)\stackrel{\sim}{\ra}(E,0)$ is an isomorphism of Higgs bundles on $X_1$.
\end{itemize}
We shall write this $f$-periodic HDRF by the tuple $(E,0,\Fil_{tr},\ldots,\Fil_{tr},\phi)$.

\item When $n\geq 2$, an \emph{$f$-periodic HDRF of level zero} over $X_n$ is defined inductively as the collection of the following data:
\begin{itemize}
\item an $f$-periodic HDRF of level zero $(\bar E,0,\Fil_{tr},\cdots,\Fil_{tr},\bar \phi)$ on $X_{n-1}$: in particular, we have the following diagram
\begin{equation}\label{eq:HDRF-over-Xn-1}
\xymatrix{& (\bar H_0,\nabla_0)\ar[rd]^{\Gr_{\Fil_{tr}}}&     & (\bar H_{f-1},\nabla_{f-1})\ar[rd]^{\Gr_{\Fil_{tr}}} & \\ (\bar E_0,0):=(\bar E,0)\ar[ru]^{C_{n-1}^{-1}} & & \cdots \ar[ru]^{C_{n-1}^{-1}}  & & (\bar E_{f},0)\ar@/^1.5pc/[llll]_{\bar{\phi}}}
\end{equation}
with $\bar \phi$ an isomorphism of Higgs bundles on $X_{n-1}$;
\item a Higgs bundle $(E,0)$ with trivial Higgs field on $X_{n}$, with
\[
(E,0)\otimes_{W_n}W_{n-1}\simeq (\bar E,0);
\]
because of the equality $
(\bar E_f,0)=\Gr_{\Fil_{tr}}(\bar H_{f-1},\bar{\nabla}_{f-1})$ and the isomorphism $\bar \phi$, we can apply the truncated inverse Cartier transform $C_n^{-1}$ of \cite{LSZ} to $(E,\theta)$, \textcolor{black}{from which we deduce a diagram}, i.e., the Higgs-de Rham flow initiated by $(E,0)$:
\begin{equation}\label{eq:HDRF-over-Xn}
\xymatrix{& ( H_0,\nabla_0)\ar[rd]^{\Gr_{\Fil_{tr}}}&     & (H_{f-1},\nabla_{f-1})\ar[rd]^{\Gr_{\Fil_{tr}}} & \\ (E_0,0):=(E,0)\ar[ru]^{C_{n}^{-1}} & & \cdots \ar[ru]^{C_{n}^{-1}}  & & (E_{f},0)}.
\end{equation}
\textcolor{black}{As the inverse Cartier transforms} $C_{n}^{-1}$ for various $n$ are compatible with each other, the diagrams \eqref{eq:HDRF-over-Xn-1} and \eqref{eq:HDRF-over-Xn} are compatible in the evident way; and
\item an isomorphism $\phi:(E_f,0)\stackrel{\sim}{\ra} (E,0)$ of Higgs bundles on $X_n$, compatible with $\bar \phi:(\bar E_f,0)\stackrel{\sim}{\ra} (\bar E,0)$.
\end{itemize}
Like above, we denote this $f$-periodic HDRF by the tuple
\[
(E,0,\Fil_{tr},\ldots, \Fil_{tr},\phi).
\]
\end{enumerate}
\end{definition}

Let $(E,0)$ be a Higgs bundle with trivial Higgs field on $X_{n}$, such that $(\bar E, 0):=(E,0)\otimes_{W_n}W_{n-1}$ is the initial term of an $f$-periodic HDRF
\[
(\bar E, 0, \Fil_{tr}, \ldots, \Fil_{tr}, \bar \phi)
\]
of level zero on $X_{n-1}$. \textcolor{black}{We can apply the inverse Cartier} transform $C_n^{-1}$ on $(E,0)$. Write $(H,\nabla)=C_n^{-1}(E,0)$.

\begin{lemma}\label{flat basis} Up to a canonical isomorphism, the vector bundle with integrable connection $(H,\nabla)$ is independent of the choice of the $W_{n+1}$-lifting $X_{n+1}$ of $X_{n}$. Moreover, Zariski locally, $H$ admits a flat basis with respect to $\nabla$.
\end{lemma}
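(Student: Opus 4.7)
The strategy is to use the local description of $C_n^{-1}$ via Frobenius liftings, which simplifies drastically for a Higgs bundle with vanishing Higgs field. Working Zariski-locally on $X_n$, I would choose an affine open $U_n \subset X_n$ small enough that a Frobenius lifting $\tilde F : U_{n+1}\to U_{n+1}$ exists on the corresponding open of $X_{n+1}$; write $F$ for the reduction of $\tilde F$ modulo $p^n$. The explicit formula for $C_n^{-1}$ from \cite{LSZ}, applied to the Higgs bundle $(E,0)$ with zero Higgs field, yields
\[
C_n^{-1}(E,0)|_{U_n} \;\cong\; \bigl(F^*E|_{U_n},\,\nabla_{\mathrm{can}}\bigr),
\]
where $\nabla_{\mathrm{can}}$ is the canonical connection on a Frobenius pullback, characterized by the property that its local horizontal sections are the pullbacks of sections of $E$ via $F^*$. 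The flat basis assertion is then immediate: for any local frame $e_1,\ldots,e_r$ of $E|_{U_n}$, the vectors $F^*e_1,\ldots,F^*e_r$ form a flat basis of $H|_{U_n}$.

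For the independence statement I would proceed in two stages. First, fix $X_{n+1}$ and vary only the Frobenius lifting: given two choices $\tilde F_1,\tilde F_2$ over the same affine, the comparison isomorphism $(F_1^*E,\nabla_1)\xrightarrow{\sim}(F_2^*E,\nabla_2)$ between the two local realizations of $C_n^{-1}(E,\theta)$ produced in \cite{LSZ} (cf.\ also \cite{OV}) is expressed as an exponential in the derivation $(\tilde F_1-\tilde F_2)/p$ paired with the Higgs field $\theta$. Since $\theta=0$, the formula collapses to the identity, so the local $C_n^{-1}(E,0)$ is intrinsically determined. Second, for two $W_{n+1}$-liftings $X_{n+1}$ and $X'_{n+1}$ of $X_n$, the smoothness of $X_n/W_n$ allows one to pick, Zariski-locally, an isomorphism $\tau:U_{n+1}\xrightarrow{\sim}U'_{n+1}$ of liftings together with compatible Frobenius lifts; this transports the local description on one side to the other, yielding a canonical identification piece by piece.

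To globalize, one checks that these local identifications patch consistently. The ambiguity in the choice of $\tau$ on overlaps is a \v{C}ech $1$-cochain with values in $T_{X_1/k}\otimes p^n\mathcal O_{X_{n+1}}$, and the induced discrepancy on the local flat bundles is, by the same LSZ formulas, an exponential of this cochain paired with $\theta$. With $\theta=0$ the discrepancy vanishes, so the local identifications glue to a canonical global isomorphism between the two versions of $(H,\nabla)$. The main obstacle I anticipate is bookkeeping: the LSZ construction of $C_n^{-1}$ is inductive on $n$ and uses the HDRF data already fixed on $X_{n-1}$, so one must verify that the collapse of twist terms due to $\theta=0$ propagates through every level of the induction and that the resulting canonical isomorphism reduces modulo $p^{n-1}$ to the one produced at the previous level. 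This check is essentially mechanical within the formalism of \cite{LSZ}.
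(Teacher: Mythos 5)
There is a genuine gap, and it is located exactly at the step you dismiss as ``essentially mechanical.'' Your argument rests on the identification $C_n^{-1}(E,0)\simeq (F^*E,\nabla_{\mathrm{can}})$, with horizontal sections being the Frobenius pullbacks of \emph{arbitrary} local sections of $E$, and on the claim that the comparison isomorphism attached to two Frobenius liftings is an exponential paired with the Higgs field $\theta$, hence the identity when $\theta=0$. Both statements are true only for $n=1$. For $n\geq 2$ the truncated inverse Cartier transform of \cite{LSZ} does not take $(E,0)$ alone as input: it takes the full datum $(E,0,\bar H_{f-1},\bar\nabla_{f-1},\Fil_{tr},\bar\phi)$ coming from the flow one level down, and the resulting connection on $H=F_{X_n}^*E$ is given by \eqref{formula-for-connection}, whose second term involves $\bar\nabla$, the connection induced on $\bar E$ via $\bar\phi$ from $\bar\nabla_{f-1}$. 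This term is nonzero in general, so $1\otimes e$ is \emph{not} flat for an arbitrary frame $e$ of $E$; flat frames only arise from frames $e$ whose reduction $\bar e$ is $\bar\nabla$-flat, and producing such frames is precisely the content of the induction on $n$ in the paper's proof (transport a flat frame of $\bar H_{f-1}$ through $\bar\phi$). Likewise, the gluing isomorphism for two Frobenius liftings is the Taylor series \eqref{eq:series-for-sigma} in the $p$-connection $\tilde\nabla$ (equivalently in $\bar\nabla$), not in $\theta$; it does \emph{not} collapse to the identity when $\theta=0$, so your ``discrepancy vanishes'' argument for independence of the lifting, and for patching across two models $X_{n+1}$, $X'_{n+1}$, breaks down at every level $n\geq 2$.

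The correct mechanism, which your proposal is missing, is not vanishing of twist terms but a divisibility statement: because $\tilde H\simeq E$ and $\tilde\nabla(x)=p\bar\nabla(\bar x)$, the factor $p$ cancels the divisions by $p$ occurring in $\frac{dF_{X_{n+1}}}{p}$ and in $\frac{F_{X_{n+1}}(\underline t)-F'_{X_{n+1}}(\underline t)}{p}$, so both the connection formula \eqref{formula-for-connection} and the comparison isomorphisms can be rewritten purely in terms of the reductions $F_{X_n}$, $F'_{X_n}$ modulo $p^n$ and of $\bar\nabla$. That is what makes $(H,\nabla)$ depend only on $X_n$ and not on the chosen $W_{n+1}$-model, and the flat-basis statement then requires the separate inductive argument just described rather than following ``immediately'' from a canonical-connection description.
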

\begin{proof} Consider the corresponding diagram \eqref{eq:HDRF-over-Xn} associated with the $f$-periodic HDRF $(\bar E,0,\Fil_{tr},\ldots,\Fil_{tr},\bar \phi)$ over $X_{n-1}$. In particular, $\bar \phi : (\bar E_f,0)\stackrel{\sim}{\ra}(\bar E,0)$ is an isomorphism of Higgs bundles over $X_{n-1}$, and $(\bar E_f,0)=\Gr_{\Fil_{tr}}(\bar H_{f-1},\bar{\nabla}_{f-1})$. So $\bar E_f=\bar{H}_{f-1}$. We want to compute $C_n^{-1}(E,0)$, which, according to \cite{LSZ}, means
$$
C_n^{-1}(E,0,\bar H_{f-1},\bar \nabla_{f-1},\Fil_{tr},\bar \phi).
$$
From the proof of \cite[Theorem 4.1]{LSZ}, the latter is obtained in two steps. The first step is to construct a vector bundle with integrable $p$-connection on $X_n$, given by
\begin{equation*}
 (\tilde H,\tilde \nabla)=\sT_n(E,0,\bar H_{f-1},\bar \nabla_{f-1},\Fil_{tr},\bar \phi).
\end{equation*}
We claim that $\tilde H\simeq E$. Indeed, by the second approach in the construction of $\sT_n$ in \cite{LSZ}, $\tilde H$ is the cokernel of the morphism
$$
E\longrightarrow \bar E\oplus E, \quad x\mapsto (\bar x, -px),
$$
where $\bar x$ is the reduction of $x$ mod $p^{n-1}$. The image of this morphism is equal to  the kernel of the morphism
$$
\bar E\oplus E \longrightarrow E, \quad (\bar x,y)\mapsto p\bar x+y.
$$
Thus $\tilde H\simeq E$, as claimed. Under this identification, the $p$-connection $\tilde{\nabla}$ on $E$ can be described as follows. Let $\bar{\nabla}$ be the connection on $\bar E$ induced from $\nabla_{f-1}$ on $\bar E_{f}=\bar H_{f-1}$ through the isomorphism $\bar \phi: \bar E_f\stackrel{\sim}{\ra}\bar E$. For any local section $x$ of $\tilde H\simeq E$, write $\bar x$ for its reduction modulo $p^{n-1}$. Let $\tilde{y}\in E\otimes\Omega_{X_n/W_n}^1$ be any (local) lifting of $\bar{\nabla}(\bar x)\in\bar E\otimes \Omega_{X_{n-1}/W_{n-1}}^1$. Since the difference of two such liftings lies in $p^{n-1}E\otimes \Omega_{X_n/W_n}^1$, $p\tilde{y}$ is a well-defined local section of $E\otimes \Omega_{X_n/W_n}^1$ depending only on $\bar x$. Denote the latter by $p\bar{\nabla}(\bar x)$. Then one has $\tilde{\nabla}(x)=p\bar{\nabla}(\bar x)$.

To deduce $(H,\nabla)$ from $(\tilde H, \tilde \nabla)$, assume first that $X_{n+1}$ is \emph{small}, that is, there exists an \'etale morphism of $W_{n+1}$-schemes:
\[
X_{n+1}\longrightarrow \Spec(W_{n+1}[t_1^{\pm 1},\ldots, t_d^{\pm 1}]).
\]
Let $F_{X_{n+1}}:X_{n+1}\to X_{n+1}$ be a \emph{Frobenius lifting}, i.e., a lifting of the absolute Frobenius on $X_1$ that is compatible with the Frobenius on $W_{n+1}$. Let $F_{X_n}:X_n\ra X_n$ be its reduction modulo $p^n$. By Step 1 in the proof of \cite[Proposition 4.15]{LSZ} and \cite[Formula (4.15.1)]{LSZ},
\[
H=F_{X_n}^{*}\tilde H=F_{X_n}^*E=\mathcal O_{X_n}\otimes_{F_{X_n}^{\#},\mathcal O_{X_n}}E,
\]
and for $\lambda\otimes x\in F_{X_n}^*E=H$ we have
\begin{equation}\label{formula-for-connection}
\begin{array}{rcl}\nabla(\lambda\otimes x)& =& (1\otimes x)\otimes d\lambda +\lambda \left(1\otimes \frac{dF_{X_{n+1}}}{p}\right)(1\otimes \tilde{\nabla}(x)) \\ & =& (1\otimes x)\otimes d\lambda +\lambda \left(1\otimes dF_{X_{n}}\right)(1\otimes \bar{\nabla}(\bar x)) .\end{array}
\end{equation}
Here again, since $dF_{X_n}(\Omega_{X_n/W_n}^1)\subset p\Omega_{X_n/W_n}^1$, $(1\otimes dF_{X_{n}})(1\otimes \bar{\nabla}(\bar x))$ is a well-defined section of $H\otimes\Omega_{X_n/W_n}^1$. In particular, $\nabla $ depends only on $F_{X_n}$. Moreover, let $F_{X_{n+1}}'$ be a second Frobenius lifting. Write $\nabla'$ for the connection on $H':=F_{X_{n}}^{'*}\tilde{H}$ defined similarly as above. One has a horizontal isomorphism $\sigma=\sigma_{F_{X_{n+1}},F_{X_{n+1}}'}$ between $(H,\nabla)$ and $(H',\nabla')$:
\[
\sigma : F_{X_n}^*\tilde{H}\longrightarrow F_{X_n}^{'*}\tilde{H},\quad 1\otimes x\mapsto\sum_{\underline m\in \mathbb N^d}\frac{1}{\underline m!}\left(\frac{F_{X_{n+1}}(\underline t)-F_{X_{n+1}}'(\underline t)}{p}\right)^{\underline m}\otimes \tilde{\nabla}_{\partial/\partial \underline t}^{\underline m}(x).
\]
Since $\tilde \nabla(x)=p\bar{\nabla}(\bar x)$, the series above can be rewritten as
\begin{equation}\label{eq:series-for-sigma}
\sum_{\underline m\in \mathbb N^d}\frac{(F_{X_{n}}(\underline t)-F_{X_{n}}'(\underline t))^{\underline m}}{\underline m!}\otimes \bar{\nabla}_{\partial/\partial \underline t}^{\underline m}(\bar x).
\end{equation}
Thus, $\sigma$ depends only on the reduction modulo $p^n$ of $F_{X_{n+1}}$ and $F_{X_{n+1}}'$ as well.

In the general case, we cover $X_{n+1}$ by small open subsets $X_{n+1}=\bigcup_{i\in I} U_i$. For each $i\in I$, choose a Frobenius lifting $F_i: U_i\ra U_i$. Write $\bar U_i$ (resp. $\bar{F}_i$) for the reduction modulo $p^n$ of $U_i$ (resp. of $F_i$). Then, $(H,\nabla)$ is obtained by gluing $(H_i,\nabla_i)=\bar F_i^{*}(\tilde H, \tilde{\nabla})|_{\bar U_i}$ on $\bar U_i$, $i\in I$, along the horizontal isomorphisms
\[
\sigma_{F_i, F_j} : (H_i, \nabla_i)|_{\bar U_i\cap \bar U_j}\stackrel{\sim}{\longrightarrow} (H_j, \nabla_j)|_{\bar U_i\cap \bar U_j}, \quad i, j\in I.
\]
As seen in the paragraph above, the $(H_i, \nabla_i)$ and the gluing isomorphisms $\sigma_{F_i, F_j}$ can be given using only the reduction modulo $p^n$ of $F_i$ and $F_j$, so the same holds for $(H,\nabla)$. Furthermore, a similar argument shows that, up to a canonical isomorphism, $(H,\nabla)$ does not depend on the covering $X_n=\bigcup_i \bar U_i$ and the Frobenius lifting $\bar F_i$ on $\bar U_i$. In other words, $(H,\nabla)$ depends only on the reduction modulo $p^n$ of $X_{n+1}$, as asserted in the first part of our lemma.

For the remaining part of our lemma, it is harmless to assume that $X_n$ is small. We shall do induction on $n$. If $n=1$, $\nabla$ is the canonical connection $\nabla_{can}$ of trivial $p$-curvature in the Cartier descent theorem. A flat local basis of $H=F_{X_1}^{*}E$ is given by $1\otimes e$, where $e$ is a local basis of $E$. Assume $n\geq 2$ and the truth of our assertion for $n-1$. Choose a Frobenius lifting $F_{X_n}$ on $X_n$. By the induction hypothesis, we
 may take a local basis $e$ of $E$ whose mod $p^{n-1}$ reduction $\bar e$ in $\bar E$ is mapped via the isomorphism $\bar \phi^{-1}$ to a flat local
 basis of $\bar H_{f-1}$ with respect to $\bar \nabla_{f-1}$. Then, $e$ is a flat local basis of $\tilde H_0=E$ with respect to the
 $p$-connection $\tilde \nabla_0$. By \eqref{formula-for-connection}, $1\otimes e$ is a flat local basis of $H=F_{X_{n}}^*E$ with respect to $\nabla$, and this completes the induction step.
\end{proof}

\begin{remark}\label{rem:remove-extra-assumptions} We keep the notations of Lemma \ref{flat basis} (and its proof).
\begin{enumerate}
\item \textcolor{black}{Lemma \ref{flat basis} can be viewed as a $p^n$-torsion generalization of Cartier descent (\cite[Theorem 5.1]{Katz70}). Moreover, the fact that $(H,\nabla)$ is independent of $X_{n+1}$ generalizes the fact that the inverse Cartier transform of Ogus-Vologodsky (\cite{OV}) of a Higgs bundle with trivial Higgs field specializes to the Frobenius pullback mod $p$, which clearly does not require $X_1$ to be lifted over $W_2$.} It follows that the notion of $f$-periodic HDRF of level zero on $X_n$ can be defined without any smooth $W_{n+1}$-model of $X_n$. 


\item To define periodic HDRF of level zero, one can equally remove the assumption $p\geq 3$ in the condition (*) at the beginning of this section. Indeed, in the original definition in \cite{LSZ} (which also works for non-zero levels), the only place we need the fact that $p\geq 3$ is to make sure that the series in \eqref{eq:series-for-sigma} is a finite sum (recall that, for $p$ a prime, the series $p^n/n!$ converges $p$-adically to $0$ when $p\geq 3$). However, as in the last part of the proof above, one checks that the $\bar E$ has flat basis relative to $\bar {\nabla}$ Zariski locally. Thus $\nabla^{\underline m}_{\partial/\partial \underline t}(\bar x)$ converges $p$-adically to $0$ when $|\underline m|\to \infty$. As a result, the series in \eqref{eq:series-for-sigma} is still a finite sum when $p=2$! This allows us to glue the local pieces $(H_i,\nabla_i)$ by means of the horizontal isomorphisms $\sigma_{F_i,F_j}$ when $\mathrm{Char}(k)=p=2$. 
\end{enumerate}
Therefore, to define HDRFs of level zero on $X_n$, we can drop the condition (*) at the beginning of this section. \textcolor{black}{There is a direct definition of a morphism between} two $f$-periodic HDRFs of level zero, and we obtain the category of $f$-periodic HDRFs of level zero on $X_n$, denoted by
\[
\mathrm{HDR}_{0,f}(X_n/W_n)
\]
in the sequel.
\end{remark}


The notion of periodic HDRFs of level zero is closely related with another one introduced by Katz \cite{Katz73}. To state this, suppose that $X_n/W_n$ is equipped with a Frobenius lifting $F=F_{X_n}:X_n\ra X_n$. Take $f\in \mathbb Z_{>0}$. A \emph{Frobenius-periodic vector bundle of period $f$} on $X_n$ (relative to $F$) is a pair $(E,\psi)$ consisting of a vector bundle $E$ over $X_n$, together with an isomorphism of $\mathcal{O}_{X_n}$-modules
\[
\psi:F^{*f}(E)\stackrel{\sim}{\longrightarrow} E.
\]
Such objects form a category, denoted by $\mathrm{FVect}_f(X_n,F)$.

\begin{proposition}\label{Frobenius-periodic bundles}
Assume that there exists a Frobenius lifting $F:X_{n}\to X_{n}$. Let $(E,0, \Fil_{tr},\ldots, \Fil_{tr},\phi)$ be an $f$-periodic HDRF of level zero on $X_n$, with \eqref{eq:HDRF-over-Xn} its corresponding diagram. Then, $E_f\simeq F^{*f}E$, and under this identification, $\phi$ becomes an isomorphism $\phi_F:F^{*f}E\stackrel{\sim}{\ra}E$ of $\mathcal{O}_{X_n}$-modules.  Moreover, the functor
\begin{equation}\label{eq:functor-HDRF-FVect}
\mathrm{HDR}_{0,f}(X_n/W_n)\longrightarrow \mathrm{FVect}_{f}(X_n,F),
\end{equation}
sending $(E,0,\Fil_{tr},\ldots, \Fil_{tr},\phi)$ to $(E,\phi_F)$ is an equivalence of categories.
\end{proposition}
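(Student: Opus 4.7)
The plan is to trace through the construction of the HDRF when a global Frobenius lifting is available, and to observe that each truncated inverse Cartier transform collapses to $F^*$, so that the periodicity datum $\phi$ reduces to a Frobenius descent datum on the single vector bundle $E$. Concretely, I would revisit the computation made in the proof of Lemma \ref{flat basis}: with a Frobenius lifting $F=F_{X_n}$ (taken as the reduction of a Frobenius lift $F_{X_{n+1}}$ on a smooth $W_{n+1}$-model, existing at least Zariski locally), the truncated inverse Cartier transform $C_n^{-1}$ applied to a Higgs bundle $(E,0)$ yields $H_0=F^*E$ equipped with the connection of formula \eqref{formula-for-connection}. Since the filtration is trivial, $\Gr_{\Fil_{tr}}(H_0,\nabla_0)=(F^*E,0)$ as a Higgs bundle, so $E_1=F^*E$. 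Iterating $f$ times gives canonical identifications $E_i\simeq F^{*i}E$ for $0\le i\le f$; in particular $E_f\simeq F^{*f}E$, and $\phi:(E_f,0)\xrightarrow{\sim}(E,0)$, being an isomorphism of Higgs bundles with trivial Higgs fields, is precisely the datum of an $\mathcal{O}_{X_n}$-linear isomorphism $\phi_F:F^{*f}E\xrightarrow{\sim}E$. This defines the functor \eqref{eq:functor-HDRF-FVect}.

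Next I would construct an explicit quasi-inverse. Given $(E,\psi)\in\mathrm{FVect}_f(X_n,F)$, set $E_i:=F^{*i}E$ for $0\le i\le f$, endow each with the trivial Higgs field and the trivial filtration, and take $\phi:=\psi$. By the previous paragraph, the diagram \eqref{eq:HDRF-over-Xn} produced by the canonical alternation of $C_n^{-1}$ and $\Gr_{\Fil_{tr}}$ reproduces exactly these $E_i$, so $(E,0,\Fil_{tr},\ldots,\Fil_{tr},\psi)$ is a well-defined $f$-periodic HDRF of level zero. These two functors are manifestly mutually quasi-inverse on objects. On morphisms, a morphism of HDRFs is by definition a morphism of initial Higgs bundles that propagates compatibly through each $C_n^{-1}$, each $\Gr_{\Fil_{tr}}$, and the periodicity isomorphism; functoriality of $F^*$ makes compatibility with every intermediate stage automatic, so the data of such a morphism reduces to an $\mathcal{O}_{X_n}$-linear map intertwining $\phi_F$ and $\phi'_F$, which is exactly a morphism in $\mathrm{FVect}_f(X_n,F)$.

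The main obstacle is the bookkeeping of identifications in the inductive definition in $n$: I need to know that the canonical isomorphism $C_n^{-1}(E,0)\simeq(F^*E,\nabla)$ of Lemma \ref{flat basis} is functorial in $(E,0)$ and compatible with reduction modulo $p^{n-1}$, so that the identifications $E_i\simeq F^{*i}E$ glue across the inductive step and the tuple really defines an object of $\mathrm{HDR}_{0,f}(X_n/W_n)$ rather than merely a compatible sequence of isomorphism classes. This is contained in the two-step construction $C_n^{-1}=F^*\circ\sT_n$ reviewed in Lemma \ref{flat basis}, but it requires chasing the identification $\sT_n(E,0,\bar H_{f-1},\bar\nabla_{f-1},\Fil_{tr},\bar\phi)\simeq E$ together with the connection formula \eqref{formula-for-connection}. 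A secondary technicality is that when $X_n$ is not globally small one must cover it by small opens with local Frobenius liftings and invoke the horizontal gluing isomorphisms $\sigma_{F_i,F_j}$ of Lemma \ref{flat basis}; both the source and target categories depend on the chosen global $F$, so the dependence is consistent and the equivalence is canonical once $F$ is fixed.
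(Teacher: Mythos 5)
Your proposal is correct and follows essentially the same route as the paper: both identify $C_n^{-1}$ of a trivial-Higgs-field bundle with the Frobenius pullback $F^{*}$ (via the computation in Lemma \ref{flat basis}), iterate to get $E_i\simeq F^{*i}E$, and build the quasi-inverse by sending $(E,\psi)$ back to the flow with all filtrations trivial. The only difference is presentational: the paper carries out the ``bookkeeping'' you flag as an explicit induction on $n$, first producing the $f$-periodic flow on $X_{n-1}$ from $(\bar E,\bar\psi)$ so that $C_n^{-1}$ can legitimately be applied to $(E,0)$ at the next level.
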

In particular, for $n=1$, an $f$-periodic HDRF over $X_1$ is just a vector bundle $E$ on $X_1$ together with an isomorphism $\psi:\mathrm{Fr}^{*f}E\stackrel{\sim}{\ra} E$ of $\mathcal{O}_{X_1}$-modules, where $\mathrm{Fr}:X_1\ra X_1$ is the absolute Frobenius on $X_1$.
\begin{proof}
We shall prove our proposition by induction on $n$. Suppose $n=1$. Consider an $f$-periodic HDRF of level zero $(E,\Fil_{tr},\ldots,\Fil_{tr},\phi)$ on $X_1$, and its corresponding diagram \eqref{eq:HDRF-over-X1}. Since we have naturally $C_1^{-1}(\tilde E,0)\simeq (F^*\tilde E,\nabla_{can})$ for any Higgs bundle $(\tilde E,0)$ with trivial Higgs field, we get inductively a natural isomorphism $(E_i,0)\simeq F^{*i} E$, and $\phi$ becomes an isomorphism
$$
\phi_{F}:
(F_{X_1}^{*f}E,0)=\Gr_{\Fil_{tr}}(F_{X_1}^{*f}E,\nabla_{can})\stackrel{\sim}{\longrightarrow} (E,0)
$$
of Higgs bundles, or equivalently, an isomorphism of vector bundles
\[
\phi_F: F_{X}^{*f}E\stackrel{\sim}{\longrightarrow} E.
\]
Thus, the pair $(E,\phi_F)$ is a Frobenius-periodic vector bundle of period $f$. This gives the functor \eqref{eq:functor-HDRF-FVect} when $n=1$, which is an equivalence of categories.

Assume $n\geq 2$. As seen in the proof of Lemma \ref{flat basis}, $E_i\simeq F^{*i}E$ for $1\leq i\leq f$. So like above, $E_{f}\simeq F^{*f}E$ and $\phi$ becomes an isomorphism $\phi_F: F^{*f}E\stackrel{\sim}{\ra} E$ of vector bundles, giving a Frobenius-periodic vector bundle $(E,\phi_F)$ of period $f$ over $X_n$. This defines the functor \eqref{eq:functor-HDRF-FVect}. Conversely, let $(E,\psi)$ be a
 Frobenius-periodic vector bundle of period $f$ on $X_n$ and set $(\bar E,\bar \psi)=(E,\psi)\otimes \Z/p^{n-1}\Z$. By induction, we attach to $(\bar E,\bar \psi)$ to an $f$-periodic HDRF $(\bar E,0,Fil_{tr},\cdots,Fil_{tr},\bar \phi)$ over $X_{n-1}$ such that $\bar{\phi}_{\bar F}=\bar \psi$, where $\bar F$ is the reduction of $F$ modulo $p^{n-1}$. The bundle part of $C_n^{-1}(E,0)$ is canonically identified with $F_{}^{*}E$, and successively one has (set $(E_0,\theta_0)=(E,0)$)
$$
(E_i,\theta_i)=\Gr_{\Fil_{tr}}\left(C_n^{-1}(E_{i-1},\theta_{i-1})\right)\simeq (F_{X_n}^{*i}E,0), \quad 1\leq i\leq f.
$$
So, $\psi$ gives the isomorphism $\phi:E_f\stackrel{\sim}{\ra}E$ required for a periodic HDRF of level zero over $X_n$. As a result, the functor \eqref{eq:functor-HDRF-FVect} is an equivalence of categories.
\end{proof}

Later we shall need the notion of periodic HDRF of level zero over $W$. Let $\mathcal X$ be a smooth formal scheme over $W$ lifting $X/k$, and write $X_n$ the reduction modulo $p^n$ of $\mathcal X$. An \textit{$f$-periodic HDRF of level zero on $\mathcal X/W$} consists of a HDRF of level zero $(E_n,0,\Fil_{tr},\ldots,\Fil_{tr},\phi_n)$ on $X_n/W_n$ for each $n\in \mathbb Z_{>0}$, together with isomorphisms
\[
(E_{n+1},0,\Fil_{tr},\ldots, \Fil_{tr},\phi_{n+1})|_{X_n}\stackrel{\sim}{\longrightarrow} (E_n,0,\Fil_{tr},\ldots, \Fil_{tr}, \phi_n), \quad \forall \ n>0.
\]
In particular, the projective limit $\varprojlim (E_{n},0)$ gives a Higgs bundle $\mathcal E$ with trivial Higgs field on $\mathcal X$. Since the truncated inverse Cartier transforms $C_n^{-1}$ for various $n$ are compatible with each other, the projective limit
\[
C_{\mathcal X}^{-1}(E,0):=\varprojlim (C_n^{-1}(E_n,0))
\]
is a vector bundle with integrable connection on $\mathcal X$, and the compatible family $(\phi_n)_{n\geq 1}$ defines an isomorphism of Higgs bundles
\[
\phi=\varprojlim \phi_n :  (\Gr_{\Fil_{tr}}\circ C_{\mathcal X}^{-1})^f(\mathcal E,0)\stackrel{\sim}{\longrightarrow} (\mathcal E, 0).
\]
As above, we shall denote this $f$-periodic HDRF of level zero on $\mathcal X$ by the tuple $(\mathcal E, 0, \Fil_{tr},\ldots, \Fil_{tr}, \phi)$, and by $\mathrm{HDR}_{0,f}(\mathcal X/W)$ the category of such objects.

\section{A Hitchin-Simpson correspondence in positive characteristic} Let $f\geq 1$ be an integer. Let $k$ be a perfect field of positive characteristic $p$ containing $\mathbb F_{p^f}$, the finite field with $p^f$ elements. Let $X/k$ be a connected smooth scheme. In this section, we would like to, under some conditions, give an algebro-geometric parametrization of the category
\[
\Rep_{W_n(\F_{p^f})}(\pi_1(X))
\]
of continuous representations of $\pi_1(X)$ on finite free $W_n(\F_{p^f})$-modules.  For $n=1$, such a parametrization was done by Lange-Stuhler \cite{LS} and Katz \cite{Katz73} (see also \cite{Katz71}). In loc. cit., Katz also gave a parametrization for general $n$, however with the liftability assumption both on the scheme $X$ and on the absolute Frobenius morphism. This latter condition is very restrictive: for example, it is well-known that there is no Frobenius lifting over $W_n=W_n(k)$, $n\geq 2$, for any smooth projective curve of genus $\geq 2$. Notice that in the theorem below, we assume only a mild liftability condition on the scheme $X/k$.

\begin{theorem}\label{equivalence in HT zero case} Let $f\geq 1$ be an integer. Let $k$ be a perfect field of positive characteristic $p$ containing $\mathbb F_{p^f}$. Let $X$ be a connected smooth variety over $k$. Assume that $X/k$ can be lifted to a smooth scheme $X_n$ over $W_n$. Then there is an equivalence of categories between $\mathrm{HDR}_{0,f}(X_n/W_n)$ of $f$-periodic HDRFs of level zero over $X_n$ and the category $\mathrm{Rep}_{W_n(\F_{p^f})}(\pi_1(X))$ of continuous $W_{n}(\mathbb F_{p^f})$-representations of $\pi_1(X)$ on finite free $W_n(\mathbb F_{p^f})$-modules.
\end{theorem}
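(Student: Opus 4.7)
The plan is to deduce the equivalence from the classical local correspondence of Lange-Stuhler \cite{LS} and Katz \cite[Prop.~4.1.1]{Katz73} by a descent argument along a Zariski cover of $X_n$ by small opens carrying Frobenius liftings. The two main inputs are Proposition \ref{Frobenius-periodic bundles}, which identifies level-zero HDRFs with Frobenius-periodic vector bundles once a Frobenius lifting is chosen, and Lemma \ref{flat basis}, which guarantees that the HDRF structure does not depend on this choice.

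First I would construct the functor $\Psi: \mathrm{HDR}_{0,f}(X_n/W_n) \to \mathrm{Rep}_{W_n(\F_{p^f})}(\pi_1(X))$. Cover $X_n$ by small Zariski opens $U_n^{(i)}$, each admitting a Frobenius lifting $F_i$. By Proposition \ref{Frobenius-periodic bundles}, the restriction of an $f$-periodic HDRF $(E,0,\Fil_{tr},\ldots,\Fil_{tr},\phi)$ to $U_n^{(i)}$ becomes a Frobenius-periodic vector bundle $(E|_{U_n^{(i)}},\phi_{F_i})$ of period $f$, and Katz's correspondence produces from it an \'etale-locally constant sheaf $\mathcal V^{(i)}$ of finite free $W_n(\F_{p^f})$-modules on $U^{(i)}:=U_n^{(i)}\otimes k$. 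On overlaps the two local constructions use different Frobenius liftings; the canonical horizontal isomorphism $\sigma_{F_i,F_j}$ from the proof of Lemma \ref{flat basis}, given by the explicit series \eqref{eq:series-for-sigma}, supplies a comparison $\mathcal V^{(i)}|_{U^{(i)}\cap U^{(j)}}\xrightarrow{\sim}\mathcal V^{(j)}|_{U^{(i)}\cap U^{(j)}}$. The uniqueness properties of $\sigma_{F_i,F_j}$ imply the cocycle condition on triple intersections, so the $\mathcal V^{(i)}$ glue to a global \'etale-locally constant sheaf $\mathcal V$ of finite free $W_n(\F_{p^f})$-modules on $X$, equivalently a continuous representation of $\pi_1(X)$.

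For the quasi-inverse $\Phi$, I would start from a continuous representation $\rho$, view it as an \'etale-locally constant sheaf $\mathcal V$ on $X$, and reverse the above procedure: on each small Zariski open $U_n^{(i)}\subset X_n$ with a chosen Frobenius lifting $F_i$, apply Katz's theorem to $\mathcal V|_{U^{(i)}}$ to obtain a local Frobenius-periodic vector bundle of period $f$, and then invoke Proposition \ref{Frobenius-periodic bundles} to produce a local HDRF of level zero over $U_n^{(i)}$. Since the HDRF notion is intrinsic (Lemma \ref{flat basis}), these local HDRFs patch, along the cocycle of $\mathcal V$, into a global HDRF of level zero on $X_n$. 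That $\Psi$ and $\Phi$ are mutually quasi-inverse is an assertion that can be checked \'etale-locally, where a Frobenius lifting is available and where both constructions reduce to Katz's local equivalence.

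The main obstacle will be the descent step, namely verifying that under a change of Frobenius lifting $F\leadsto F'$ on a small open the horizontal comparison $\sigma_{F,F'}$ intertwines the Katz correspondences attached to $(E,\phi_F)$ and $(E,\phi_{F'})$, so that the local representations agree on overlaps and the cocycle condition holds on triple intersections. This is essentially a compatibility between Katz's construction and the canonical connection $\nabla_{can}$ on a Frobenius pullback, and reduces to a direct, if somewhat delicate, computation with the explicit series formula \eqref{eq:series-for-sigma}; the inductive structure of the HDRF (passage from level $n-1$ to level $n$) suggests that this can most cleanly be handled by induction on $n$, with the base case $n=1$ being Lange-Stuhler's theorem applied to the absolute Frobenius $F_X:X\to X$, which is its own tautological lifting.
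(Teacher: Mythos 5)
Your architecture is the same as the paper's: cover $X_n$ by small opens with Frobenius liftings, apply Katz's local equivalence through Proposition \ref{Frobenius-periodic bundles}, and glue using a change-of-Frobenius comparison. But the decisive step --- that the local representations attached to $(E|_U,\phi_F)$ and $(E|_U,\phi_{F'})$ agree on overlaps and satisfy the cocycle condition on triple intersections --- is precisely what you defer to a ``direct, if somewhat delicate, computation with \eqref{eq:series-for-sigma}'', and as stated that computation is not available. The series $\sigma_{F,F'}$ is not the identity on arbitrary sections, and Lemma \ref{flat basis} only gives independence of the $W_{n+1}$-lifting of $X_n$; what is actually needed is Lemma \ref{lem:existence-of-connection}, which equips $E$ itself (not merely a Frobenius pullback) with a connection $\nabla$, induced via $\phi$ from $\nabla_{f-1}$, for which each local $\phi_F$ is horizontal and $\phi_{F'}\circ\sigma=\phi_F$.

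The missing idea, which carries the paper's proof, is that $\phi_F$-invariant \'etale local sections are automatically flat: horizontality gives $\nabla(x)=(\phi_F\otimes dF^f)^m(\nabla(x))$ for every $m$, and since $dF(\Omega_{X_n/W_n}^1)\subset p\,\Omega_{X_n/W_n}^1$ this forces $\nabla(x)=0$. Only then does the Taylor series collapse to $\sigma(x)=1\otimes x$, and the commutativity of \eqref{eq:phi-for-two-liftings} yields $\phi_{F'}(x)=x$; hence $E^{\phi_F=1}=E^{\phi_{F'}=1}$ \emph{as equal subsheaves}, so the local $W_n(\F_{p^f})$-module schemes glue canonically and there is no residual cocycle to check. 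Your proposed induction on $n$ does not substitute for this: agreement of invariants modulo $p^{n-1}$ does not control the top-order discrepancy, whereas the flatness argument works directly at each $n$. A related simplification you miss on the quasi-inverse side: the paper builds $E_\rho$ globally as a quotient of $M\otimes\sO_{Y_n}$ for a Galois cover through which $\rho$ factors, so only the Frobenius structure is constructed locally; gluing local bundles along the cocycle of $\mathcal V$, as you propose, again requires the same flatness observation to see that the local HDRF structures are independent of the chosen liftings.
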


Let us first prove the theorem above under an extra condition.

\begin{proof}[Proof of Theorem \ref{equivalence in HT zero case} under the existence of a Frobenius lifting] Assume moreover that there exists a Frobenius lifting $F:X_n\ra X_n$. By Proposition \ref{Frobenius-periodic bundles}, it suffices to find an equivalence between the category $\mathrm{FVect}_f(X_n)$ and the category $\mathrm{Rep}_{W_n(\mathbb F_{p^f})}(\pi_1(X))$, which follows from Katz's result \cite[Proposition 4.1.1]{Katz73}. More precisely, for $(E,\psi)$ a Frobenius-periodic vector bundle of period $f$ and of rank $r$ on $X_n$, Katz defined in loc. cit. its corresponding $W_n(\mathbb F_{p^f})$-representation of $\pi_1(X)$ as follows. Firstly, for a finite \'etale morphism $f:Y\ra X_n$, $Y$ admits a unique Frobenius lifting $F_Y$ that is compatible with $F_{X_n}$. It follows that we can pull-back $(E,\psi)$ through $f$ to obtain a Frobenius periodic vector bundle of period $f$ on $Y$ relative to $F_{Y}$, written by $(E_Y, \psi_Y)$. Secondly, Katz showed in loc. cit. that one may find a suitable finite \'etale Galois morphism $f:Y\ra X_n$ such that the vector bundle $E_Y$ has a basis formed by $\psi_Y$-invariant elements. As a result, the set $E_{Y}^{\psi_Y=1}$ of $\psi_Y$-invariant elements is a free $W_n(\mathbb F_{p^f})$-module of rank $r$. Finally, the Galois group of $Y/X$ acts naturally on $E_Y^{\psi_Y=1}$, so it induces a $W_{n}(\mathbb F_{p^f})$-representation of $\pi_1(X_n)=\pi_1(X)$. Let us denote the latter intuitively by $E^{\psi=1}$. Katz showed that the correspondence $(E,\psi)\mapsto E^{\psi=1}$ gives an equivalence of categories $\mathrm{FVect}_f(X_n)\stackrel{\sim}{\ra}\mathrm{Rep}_{W_n(\mathbb F_{p^f})}(\pi_1(X))$. Consequently, we have an equivalence of categories
\begin{equation}\label{eq:local-HT-equivalence-in-level-0}
\mathrm{HDR}_{0,f}(X_n/W_n)\longrightarrow \mathrm{Rep}_{W_n(\mathbb F_{p^f})}(\pi_1(X)),
\end{equation}
which sends an $f$-periodic HDRF $(E,0, \Fil_{tr},\ldots,\Fil_{tr},\phi)$ of level zero on $X_n$ to the $W_{n}(\mathbb F_{p^f})$-representation $E^{\phi_F=1}$ of $\pi_1(X)$.
\end{proof}

A priori, the functor \eqref{eq:local-HT-equivalence-in-level-0} depends on the choice of the Frobenius lifting $F$. To obtain a proof of Theorem \ref{equivalence in HT zero case} in the general case, one has to to compare the functor \eqref{eq:local-HT-equivalence-in-level-0} relative to two Frobenius liftings.

\begin{lemma}\label{lem:existence-of-connection} We keep the assumption of Theorem \ref{equivalence in HT zero case}. Let $(E,0,\Fil_{tr},\ldots, \Fil_{tr},\phi)$ be an $f$-periodic HDRF of level $0$ on $X_n$, with the corresponding diagram \eqref{eq:HDRF-over-Xn}.
\begin{enumerate}
\item The vector bundle $E$ is endowed naturally with a connection $\nabla$, such that if there is a Frobenius lifting $F$ on some open $U\subset X_n$, so that $E_{f}|_U$ is identified naturally with $F^{*f}(E|_U)$, then the isomorphism
\[
\phi_F:F^{*f}(E|_U)\stackrel{\sim}{\longrightarrow}E|_U
\]
induced from $\phi|_U$ is horizontal. Here $F^{*f}(E|_U)$ is equipped with the connection $F^{*f}(\nabla|_U)$: for $\lambda\otimes e\in F^{*f}(E|_U)=\mathcal O_{U}\otimes_{F^{f},\mathcal O_{U}}E|_U$,
\[
F^{*f}(\nabla|_U)(\lambda\otimes e)=(1\otimes e)d\lambda +\lambda(1\otimes dF^f)(\nabla(e)).
\]
\item Assume $X_n$ small, hence equipped with \'etale coordinates $\underline t=(t_1,\ldots, t_d)$. Let $F$ and $F'$ be two Frobenius liftings on $X_n$. Then there is an isomorphism of $\mathcal{O}_{X_n}$-modules
\[
\sigma:F^{*f}E\longrightarrow F^{'*f}E,\quad 1\otimes x\mapsto\sum_{\underline m\in \mathbb N^d}\frac{(F^f(\underline t)-F^{'f}(\underline t))^{\underline m}}{\underline m!}\otimes \nabla_{\partial/\partial \underline t}^{\underline m}(x)
\]
such that $\phi_{F'}\circ \sigma=\phi_F$.
\end{enumerate}
\end{lemma}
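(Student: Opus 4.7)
The plan is to first construct the connection $\nabla$ of (1) locally via a choice of Frobenius lifting $F$, and then to establish (2), which shows that the local construction is independent of $F$ and thus glues globally. The conceptual content of (2) is that it encodes the classical ``Taylor transport'' isomorphism: a connection $\nabla$ on $E$ produces a canonical identification between $F^{*f}E$ and $F^{'*f}E$ as soon as $F^f$ and $F^{'f}$ agree modulo $p$.

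For (1), I fix a Frobenius lifting $F$ on a small open $U \subset X_n$. Iterating the construction of $C_n^{-1}$ in Lemma \ref{flat basis} (equivalently, invoking Proposition \ref{Frobenius-periodic bundles}), one identifies $E_i|_U \cong F^{*i}(E|_U)$ as $\mathcal O$-modules for each $0 \le i \le f$, so that $\phi|_U$ takes the form $\phi_F : F^{*f}(E|_U) \to E|_U$. The bundle $H_{f-1}|_U = E_f|_U = F^{*f}(E|_U)$ already carries the connection $\nabla_{f-1}$ produced by the last application of $C_n^{-1}$. Define the local candidate $\nabla^{(F)}$ on $E|_U$ by transport via $\phi_F$, namely $\nabla^{(F)} := (\phi_F \otimes \mathrm{id}) \circ \nabla_{f-1} \circ \phi_F^{-1}$; by construction $\phi_F$ becomes horizontal. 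Using the explicit formula for each $\nabla_i$ from Lemma \ref{flat basis} together with an induction on $n$ (the base case $n=1$ being the canonical flat connection on a Frobenius pullback), one verifies that $\nabla_{f-1}$, read on $F^{*f}(E|_U)$, coincides with the pullback $F^{*f}(\nabla^{(F)})$, yielding the explicit formula asserted in (1).

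For (2), I work on a small $X_n$ with two Frobenius liftings $F, F'$ and with $\nabla$ as above. Writing down $\sigma$ via the stated Taylor series, I first check that it is well defined modulo $p^n$: one has $F^f(\underline t) - F^{'f}(\underline t) \in p\mathcal O_{X_n}$ (both lift $\underline t^{p^f}$), and locally, by the flat-basis statement in Lemma \ref{flat basis}, $\nabla^{\underline m}(x)$ vanishes for $|\underline m|$ sufficiently large, so the series reduces to a finite sum. Horizontality of $\sigma$ with respect to $F^{*f}\nabla$ and $F^{'*f}\nabla$ is then a direct Leibniz-rule computation. The identity $\phi_{F'} \circ \sigma = \phi_F$ reduces, via $f$-fold iteration, to the single-step horizontal gluing isomorphism $\sigma_{F, F'}$ already constructed in the proof of Lemma \ref{flat basis}; the key algebraic point is that the denominators $p^{|\underline m|}$ appearing in the single-step formula exactly cancel the factors $p^{|\underline m|}$ from $\tilde\nabla^{|\underline m|} = p^{|\underline m|}\bar\nabla^{|\underline m|}$, so the iterated formula collapses to the Taylor series in $\nabla$ and $F^f - F^{'f}$.

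Once (2) is in hand, the local connections $\nabla^{(F)}$ and $\nabla^{(F')}$ agree on overlaps, since they are related by the horizontal $\sigma$ which intertwines $\phi_F$ and $\phi_{F'}$; they therefore glue to a global $\nabla$ on $E$ with the property asserted in (1). The main obstacle will be the bookkeeping in (2): one has to track carefully how the $p$-connections $\tilde\nabla$ coming from each of the $f$ applications of $C_n^{-1}$ combine to yield a Taylor series purely in $\nabla$ and $F^f - F^{'f}$, with all intermediate $p$-factors cancelling. This is a calculation rather than a conceptual difficulty.
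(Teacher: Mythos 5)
Your proposal is correct and follows essentially the same route as the paper: identify $E_f|_U\simeq F^{*f}(E|_U)$, transport $\nabla_{f-1}$ through $\phi$, verify the local formula from the construction of $C_n^{-1}$ (as in \eqref{formula-for-connection}), and deduce (2) from the single-step gluing isomorphism \eqref{eq:series-for-sigma} together with the cancellation $\tilde\nabla=p\bar\nabla$. The only deviation is the local-to-global gluing step in (1), which is redundant: since $H_{f-1}=E_f$ already carries the globally defined connection $\nabla_{f-1}$, your $\nabla^{(F)}=(\phi_F)_*\nabla_{f-1}$ is just $\phi_*\nabla_{f-1}$ restricted to $U$, hence independent of $F$ from the outset --- which is exactly how the paper defines $\nabla$ globally without appealing to (2).
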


\begin{proof} (1) Consider the diagram \eqref{eq:HDRF-over-Xn} corresponding to $(E,0,\Fil_{tr},\ldots,\Fil_{tr},\phi)$. Via the isomorphism $\phi: E_f\stackrel{\sim}{\ra}E$, the connection $\nabla_{f-1}$ on $H_{f-1}=E_f$ induces a connection $\nabla$ on $E$. If $F$ is a Frobenius lifting on some open $U\subset X_n$, from the construction the truncated Cartier transform $C_{n}^{-1}$, we find: for $\lambda\otimes e\in H_{f-1}|_U= E_{f}|_U\simeq F^{*f}E|_U$,
\[
\nabla_{f-1}(\lambda\otimes e)=(1\otimes e)d\lambda+\lambda(1\otimes dF^f)(\bar \nabla (\bar e)),
\]
where $\bar \nabla$ (resp. $\bar e$) is the reduction modulo $p^{n-1}$ of $\nabla$ (resp. of $e$). Since $\nabla(e)$ is a lifting of $\bar{\nabla}(\bar e)$ in $E\otimes \Omega_{X_n/W_n}^1$, $(1\otimes dF^f)(\bar \nabla (\bar e))=(1\otimes dF^f)(\nabla (e))$. So $F^{*f}(\nabla|_U)=\nabla_{f-1}|_U$, and the isomorphism $\phi_F$ induced from $\phi|_U$ is horizontal.

(2) According to the construction of HDRF, the isomorphism $\phi_F$ and $\phi_{F'}$ can be inserted into the commutative diagram below 
\begin{equation}\label{eq:phi-for-two-liftings}
\xymatrix{H_{f-1}\ar@{=}[d]\ar[r]^{\simeq} & F^{*f}E\ar[r]^{\phi_F} \ar[d]_{\simeq}& E \\ H_{f-1}\ar[r]^{\simeq} & F^{'*f}E\ar[ru]_{\phi_{F'}} &  },
\end{equation}
where the vertical isomorphism is
\[
F^{*f}E\longrightarrow F^{'*f}E,\quad 1\otimes x\mapsto\sum_{\underline m\in \mathbb N^d}\frac{(F^f(\underline t)-F^{'f}(\underline t))^{\underline m}}{\underline m!}\otimes \bar{\nabla}_{\partial/\partial \underline t}^{\underline m}(\bar x).
\]
Since $\nabla_{\partial/\partial \underline t}^{\underline m}(x)$ lifts $\bar{\nabla}_{\partial/\partial \underline t}^{\underline m}(\bar x)$, the series above can be rewritten as
\[
\sum_{\underline m\in \mathbb N^d}\frac{(F^f(\underline t)-F^{'f}(\underline t))^{\underline m}}{\underline m!}\otimes \nabla_{\partial/\partial \underline t}^{\underline m}(x).
\]
Hence, the vertical isomorphism in \eqref{eq:phi-for-two-liftings} is exactly the map $\sigma$ given in (3).
\end{proof}


\begin{proof}[Proof of Theorem \ref{equivalence in HT zero case} in the general case] The key point of the proof is to compare, in the local situation, the functor \eqref{eq:local-HT-equivalence-in-level-0} relative to two different Frobenius liftings. First of all, let us assume that $X_n$ is small, thus equipped with \'etale coordinates $\underline{t}=(t_1,\ldots,t_d)$. Let $F,F'$ be two Frobenius liftings on $X_n$. Let $(E,\Fil_{tr},\ldots,\Fil_{tr},\phi)$ be an $f$-periodic HDRF of level zero on $X_n$. Relative to the lifting $F$ (resp. the lifting $F'$), the isomorphism $\phi:E_{f}\stackrel{\sim}{\ra} E$ becomes an isomorphism of $\mathcal{O}_{X_n}$-modules
\[
\phi_F: F^{*f}E\stackrel{\sim}{\longrightarrow}E, \quad (\textrm{resp. }\phi_{F'}:F^{'*f}E\stackrel{\sim}{\longrightarrow}E),
\]
and the isomorphisms $\phi_F$ and $\phi_{F'}$ are related by the $\mathcal O_{X_n}$-linear isomorphism $\sigma$ in Lemma \ref{lem:existence-of-connection} (3).
We claim that $E^{\phi_F=1}=E^{\phi_{F'}=1}$ as representations of $\pi_1(X)$. Indeed, observe first that, for an \'etale local section $x\in E^{\phi_F=1}$, since the connection $\nabla$ on $E$ is compatible with $\phi_F$, one has
\[
\nabla(x)=\nabla(\phi_F(x))=(\phi_F\otimes dF)(\nabla(x))=\ldots=(\phi_F\otimes dF)^n(\nabla(x)).
\]
But $dF(\Omega_{X_n/W_n}^1)\subset p\Omega_{X_n/W_n}^1$, so $(dF)^n(\Omega_{X_n/Wn}^1)=0$ and $\nabla(x)=0$. Thus,
\[
\sigma(x)=\sum_{\underline m\in \mathbb N^d}\frac{(F^f(\underline t)-F^{'f}(\underline t))^{\underline m}}{\underline m!}\otimes \nabla_{\partial/\partial \underline t}^{\underline m}(x)=1\otimes x\in F^{'*f}E.
\]
Furthermore, from the commutativity of \eqref{eq:phi-for-two-liftings} and the fact that $\phi_F(x)=x$, we find $\phi_{F'}(x)=x$. As a result, the set of $\phi_F$-invariant \'etale local sections of $E$ coincides with the set of $\phi_{F'}$-invariant \'etale local sections. Therefore the $W_n(\mathbb F_{p^f})$-representations $E^{\phi_F=1}$ and $E^{\phi_{F'}=1}$ of $\pi_1(X)$ are the same.

For a general smooth $W_n$-scheme $X_n$, it can be covered by its small open subsets $X_n=\bigcup_i U_i$. Choose a Frobenius lifting $F_i$ on $U_i$ for each $i$. Let $(E,\Fil_{tr}, \ldots, \Fil_{tr},\phi)$ be an $f$-periodic HDRF of level zero on $X_n$. It gives rise to a $W_n(\mathbb F_{p^f})$-representation of $\pi_1(U_i)$, or equivalently, a finite \'etale $W_{n}(\mathbb F_{p^f})$-module scheme $Y_i$ over $U_i$. By the analysis in the first paragraph of the proof, the $Y_i$ and $Y_j$ are canonically isomorphic over $U_i\cap U_j$. Thus we can glue the $Y_i$'s to a finite \'etale $W_{n}(\mathbb F_{p^f})$-module scheme $Y$ over $X_n$. In particular, we obtain a $W_n(\mathbb F_{p^f})$-representation of $\pi_1(X_n)=\pi_1(X)$, written intuitively by $E^{\phi=1}$. In this way, we obtain a functor
\begin{equation}\label{eq:functor-for-HT-equivalence}
\mathrm{HDR}_{0,f}(X_n)\longrightarrow \mathrm{Rep}_{W_n(\mathbb F_{p^f})}(\pi_1(X)), \quad (E,\Fil_{tr},\ldots, \Fil_{tr}, \phi)\mapsto E^{\phi=1}.
\end{equation}
If $X_n$ is endowed with a Frobenius lifting $F_{X_n}$, the functor is the same as the local one \eqref{eq:local-HT-equivalence-in-level-0}.

It remains to show that the functor \eqref{eq:functor-for-HT-equivalence} is an equivalence of categories. We shall do it by defining a quasi-inverse, whose construction is already contained in \cite{Katz73}. Let $\rho:\pi_1(X_n)\ra \mathrm{Aut}_{W_n(\mathbb F_{p^f})}(M)$ be a representation of $\pi_1(X)=\pi_1(X_n)$, with $M$ a finite free $W_n(\mathbb F_{p^f})$-module. Let $f:Y_n\ra X_n$ be a finite \'etale Galois cover such that $\rho$ factors through the surjection $\pi_1(X)\ra \mathrm{Aut}(Y_n/X_n)^{\rm op}$. In particular, $\mathrm{Aut}(Y_n/X_n)$ has a right $W_{n}(\mathbb F_{p^f})$-linear action on $M$. Consider the quotient $E_{\rho}$ of $M\otimes_{W_n(\mathbb F_{p^f})}\mathcal O_{Y_n }$ by the following right action of $\mathrm{Aut}(Y_n/X_n)$: for all $\gamma\in \mathrm{Aut}(Y_n/X_n)$ and $m\otimes a\in M\otimes_{W_n(\mathbb F_{p^f})}\mathcal O_{Y_n}$,
\[
(m\otimes a)\cdot \gamma:= (m\cdot \gamma)\otimes (\gamma^{*}a).
\]
So $E_{\rho}$ is a vector bundle on $X_n$. Moreover, there is a natural connection $\nabla$ on $E$ induced from the differential $d:\mathcal O_{Y_n}\ra \Omega_{Y_n/W_n}^1$. Let $U\subset X_n$ be a small open of $X_n$ and $V:=f^{-1}(U)$. Let $F_U$ be a Frobenius lifting. As $V\ra U$ is finite \'etale, $V$ is endowed a unique Frobenius lifting $F_V$ compatible with $F_U$. By the uniqueness, $F_V$ commutes with every $U$-automorphism of $V$. As a result, the map
\[
1\otimes F_V^f: M\otimes_{W(\mathbb F_{p^f})}\mathcal O_{V}\longrightarrow M\otimes_{W(\mathbb F_{p^f})}\mathcal O_{V}
\]
descends to a $F_U^f$-semilinear morphism $\psi_U$ on $E_U=E_{\rho}|_U$, such that the induced morphism  $F_U^{*f}E_U\ra E_U$ is a horizontal isomorphism. In particular, we get a Frobenius periodic vector bundle $(E_U,\psi_U)$ relative to $F_U$, whence an $f$-periodic HDRF $(E_U,\Fil_{tr},\ldots, \Fil_{tr},\phi_U)$ on $U$. If $F_U'$ is a second Frobenius lifting on $U$, and let $\psi_U':F_{U}'E\stackrel{\sim}{\ra}E_U$ be the isomorphism resulting from $F_U'$. One checks that $\psi_U$ and $\psi_U'$ are related by the Taylor series defining by the connection $\nabla$. As a result, the $f$-periodic HDRF of level zero resulting from Frobenius periodic vector bundle $(E_U,\psi_U')$ is the same as $(E_U,\Fil_{tr},\ldots,\Fil_{tr}, \phi_U)$, i.e., the latter does not depend on the choice of $F_U$. Consequently, one can glue these local HDRFs to get an $f$-periodic HDRF of level zero on $X_n$, written by $(E_{\rho},\Fil_{tr},\ldots,\Fil_{tr},\phi_{\rho})$. The required quasi-inverse of \eqref{eq:functor-for-HT-equivalence} is given by
\[
\mathrm{Rep}_{W_n(\mathbb F_{p^f})}(\pi_1(X))\longrightarrow \mathrm{HDR}_{0,f}(X_n), \quad \rho\mapsto (E_{\rho},\Fil_{tr},\ldots, \Fil_{tr},\phi_{\rho}).
\]
This completes the proof of our theorem.
\end{proof}

The functors \eqref{eq:functor-for-HT-equivalence} for various $n$ are compatible with each other. In other words, we have the following commutative diagram of categories:
\[
\xymatrix{\mathrm{HDR}_{0,f}(X_n)\ar[r]^<<<<<<{\eqref{eq:functor-for-HT-equivalence}} \ar[d]_{(-)|_{X_{n-1}} }& \mathrm{Rep}_{W_n(\mathbb F_{p^f})}(\pi_1(X)) \ar[d]^{\mathrm{mod}\ p^{n-1}}\\\mathrm{HDR}_{0,f}(X_{n-1})\ar[r]^<<<<<{\eqref{eq:functor-for-HT-equivalence}}  & \mathrm{Rep}_{W_{n-1}(\mathbb F_{p^f})}(\pi_1(X))  }
\]
Letting $n$ tend to infinity, we obtain from Theorem \ref{equivalence in HT zero case} a Hitchin-Simpson correspondence for the category $\mathrm{Rep}_{W(\mathbb F_{p^f})}(\pi_1(X))$ of \textcolor{black}{continuous} $W(\F_{p^f})$-representation of $\pi_1(X)$ on finite free $W(\F_{p^f})$-modules.

\begin{corollary}\label{cor:HT-equivalence-in-formal-case}
Let $X$ be a connected smooth variety over $k$. Assume that $X/k$ can be lifted to a smooth formal scheme $\mathcal X$ over $W=W(k)$. Then, for each integer $f>0$, there is an equivalence of categories
\[
\mathrm{HDR}_{0,f}(\mathcal X/W)\stackrel{\sim}{\longrightarrow}
\mathrm{Rep}_{W(\F_{p^f})}(\pi_1(X)), \quad (\mathcal E,0, \Fil_{tr},\ldots, \Fil_{tr},\phi)\mapsto \mathcal E^{\phi=1}.
\]
\end{corollary}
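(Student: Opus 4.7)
The plan is to take a $2$-categorical projective limit of the equivalences given by Theorem \ref{equivalence in HT zero case} at each truncation level $n$. By the definition of $\mathrm{HDR}_{0,f}(\mathcal X/W)$ recalled at the end of \S~\ref{subsection:HDRF0}, an object is precisely a compatible system $\{(E_n,0,\Fil_{tr},\ldots,\Fil_{tr},\phi_n)\}_{n\geq 1}$ of $f$-periodic HDRFs of level zero on $X_n/W_n$, together with isomorphisms identifying the reduction modulo $p^{n-1}$ of the $n$-th object with the $(n{-}1)$-st one. Thus the restriction functors $(-)|_{X_{n-1}}$ exhibit $\mathrm{HDR}_{0,f}(\mathcal X/W)$ as the $2$-projective limit of the categories $\mathrm{HDR}_{0,f}(X_n/W_n)$.

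On the representation side, since $W(\F_{p^f})=\varprojlim_n W_n(\F_{p^f})$ with its $p$-adic topology, a continuous representation $\rho:\pi_1(X)\to \GL_d(W(\F_{p^f}))$ on a finite free $W(\F_{p^f})$-module is the same data as a compatible system of continuous representations $\rho_n:\pi_1(X)\to\GL_d(W_n(\F_{p^f}))$; hence $\Rep_{W(\F_{p^f})}(\pi_1(X))$ is canonically equivalent to the $2$-projective limit of the categories $\Rep_{W_n(\F_{p^f})}(\pi_1(X))$. The commutative square displayed just before the statement of the corollary shows that the equivalences \eqref{eq:functor-for-HT-equivalence} of Theorem \ref{equivalence in HT zero case} are compatible with the transition functors on both sides; passing to the $2$-limit therefore yields an equivalence of categories between $\mathrm{HDR}_{0,f}(\mathcal X/W)$ and $\Rep_{W(\F_{p^f})}(\pi_1(X))$, and the explicit formula for the functor is $(\mathcal E,0,\Fil_{tr},\ldots,\Fil_{tr},\phi)\mapsto \varprojlim_n (E_n)^{\phi_n=1}=\mathcal E^{\phi=1}$.

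No serious obstacle is expected; the only points to verify are essentially bookkeeping. First, one should check that the construction of the functor \eqref{eq:functor-for-HT-equivalence}, being performed by gluing local finite \'etale $W_n(\F_{p^f})$-module schemes on an open cover of $X_n$, commutes with reduction modulo $p^{n-1}$, so that the limit of $(E_n)^{\phi_n=1}$ is a well-defined continuous $W(\F_{p^f})$-representation of $\pi_1(X)$ of the same rank. Second, in the other direction, given a compatible system $\{\rho_n\}$ of representations, the quasi-inverse constructed in the proof of Theorem \ref{equivalence in HT zero case} (using finite \'etale Galois covers of $X_n$ trivializing $\rho_n$, together with the unique compatible Frobenius liftings) is functorial in $n$, so the resulting HDRFs on the $X_n$ assemble into an object of $\mathrm{HDR}_{0,f}(\mathcal X/W)$; this yields a quasi-inverse to the limit functor and completes the proof.
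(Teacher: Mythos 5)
Your proposal is correct and coincides with the paper's own argument: the corollary is obtained exactly by observing that the equivalences \eqref{eq:functor-for-HT-equivalence} commute with the restriction/reduction functors (the displayed square before the corollary) and then passing to the limit over $n$, using that $\mathrm{HDR}_{0,f}(\mathcal X/W)$ and $\Rep_{W(\F_{p^f})}(\pi_1(X))$ are by definition the compatible-system (projective-limit) categories of the truncated ones. Your extra bookkeeping remarks are fine but add nothing beyond what the paper implicitly relies on.
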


\begin{remark} Theorem \ref{equivalence in HT zero case} and Corollary \ref{cor:HT-equivalence-in-formal-case} generalize the result \cite[Proposition 4.1.1]{Katz73} of Katz in the sense that we remove the assumption on the existence of Frobenius lifting. Note that, a rational version of Corollary \ref{cor:HT-equivalence-in-formal-case} was obtained by Crew in terms of \textit{unit-root $F$-isocrystals} (\cite[Theorem 2.1]{Crew}).
\end{remark}

In the remaining part of this section, we compare \eqref{eq:functor-for-HT-equivalence} with a construction in \cite{LSZ}. Let $\mathcal X/W$ be a proper smooth scheme with connected fibers, and set $X_n:=\mathcal X\otimes_W W_n$, $n\in \mathbb Z_{\geq 1}$. So $X:=X_1$ is the closed fiber of $\mathcal X/W$. Let $K$ be the fraction field of $W$. Write $\mathcal X_K$ for the generic fiber of $\mathcal X$.  Let $(E,0,\Fil_{tr},\cdots,\Fil_{tr},\phi)$ be an $f$-periodic HDRF of level zero over $X_n$, with
\[
\rho_1: \pi_1(X)\longrightarrow \mathrm{GL}_{r}(W_n(\mathbb F_{p^f}))
\]
the representation of $\pi_1(X)$ given by Theorem \ref{equivalence in HT zero case}. On the other hand,  by \cite[Theorem 5.3]{LSZ} and \cite[Theorem 2.6]{Fa1}, there exists another $W_n(\F_{p^f})$-representation
\[
\rho_2: \pi_1(\mathcal X_K) \longrightarrow \mathrm{GL}_r(W_n(\mathbb F_{p^f})).
\]
of $\pi_1(\mathcal X_K)$  attached to $(E, 0, \Fil_{tr},\ldots, \Fil_{tr},\phi)$. We want to compare $\rho_1$ with $\rho_2$.

\begin{proposition}\label{factor through specialization map Part 1}We keep the notations above. Then, the representation $\rho_2$ factors through the specialization morphism $\mathrm{sp}:\pi_1(\mathcal X_K)\twoheadrightarrow \pi_1(X)$, and the resulting $W_n(\mathbb F_{p^f})$-representation of $\pi_1(X)$ is isomorphic to $\rho_1$.
\end{proposition}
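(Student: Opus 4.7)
The plan is to exhibit a single finite \'etale cover over which both representations become trivial, and then compare the induced Galois descent data.

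First, by the proof of Theorem~\ref{equivalence in HT zero case} (together with the local Katz-type argument on small opens), there exists a connected finite \'etale Galois cover $f_n:Y_n\to X_n$, with Galois group $G$, such that the pulled-back HDRF on $Y_n$ is trivialized: concretely, $E|_{Y_n}$ admits a basis of $\phi$-invariant sections, and the finite free $W_n(\F_{p^f})$-module $M:=(E|_{Y_n})^{\phi=1}$ carries a natural right $G$-action realizing $\rho_1$ via the $G$-torsor $Y_n\to X_n$. Since $\mathcal X/W$ is proper smooth, $X=X_1$ and $X_n$ have the same \'etale site as $\mathcal X$, and hence $f_n$ lifts uniquely to a finite \'etale Galois cover $g:\mathcal Y\to \mathcal X$ with the same group $G$, whose special fiber is $Y:=Y_n\otimes_{W_n} k$ and whose generic fiber $\mathcal Y_K\to \mathcal X_K$ is a finite \'etale Galois cover with group $G$. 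This yields a commutative diagram
\[
\xymatrix{ \pi_1(\mathcal Y_K)\ar[r]\ar[d]_{\mathrm{sp}_{\mathcal Y}} & \pi_1(\mathcal X_K)\ar@{->>}[r]\ar[d]^{\mathrm{sp}} & G^{\mathrm{op}}\ar@{=}[d] \\ \pi_1(Y)\ar[r] & \pi_1(X)\ar@{->>}[r] & G^{\mathrm{op}} }
\]
in which both rows are exact sequences (up to passage to the kernel of the right-hand surjection).

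Second, the Lan-Sheng-Zuo construction of $\rho_2$ is functorial with respect to finite \'etale base change: pulling back the HDRF $(E,0,\Fil_{tr},\ldots,\Fil_{tr},\phi)$ along $g$ produces a periodic HDRF on the lifted cover whose associated Faltings-LSZ representation is the restriction $\rho_2|_{\pi_1(\mathcal Y_K)}$. On $\mathcal Y$, however, the pulled-back HDRF is (by construction of $Y_n$) isomorphic to the constant flow associated with the trivial $G$-module $M\otimes_{W_n(\F_{p^f})}\mathcal O_{\mathcal Y}$ endowed with its tautological Frobenius structure. By inspection of the Faltings functor on such trivial objects, the corresponding crystalline representation of $\pi_1(\mathcal Y_K)$ is trivial with underlying module $M$. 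Consequently, $\rho_2$ factors through the surjection $\pi_1(\mathcal X_K)\twoheadrightarrow G^{\mathrm{op}}$, hence a fortiori through $\mathrm{sp}:\pi_1(\mathcal X_K)\twoheadrightarrow \pi_1(X)$, and the induced $W_n(\F_{p^f})$-representation of $G^{\mathrm{op}}$ is given by the natural right $G$-action on $M$---which is exactly the description of $\rho_1$ given in Theorem~\ref{equivalence in HT zero case}.

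The main obstacle is the compatibility step: identifying the Faltings-LSZ functor applied to a \emph{trivialized} level-zero HDRF on $\mathcal Y$ with the trivial representation, in a way that is $G$-equivariant. This reduces to unwinding the definition of the functor in \cite[Theorem~5.3]{LSZ} and the Fontaine-Faltings functor of \cite[Theorem~2.6]{Fa1} in the unit-root case, and checking that both the underlying module and the $G$-action coincide with those produced by Katz-type descent. Once this identification is made, the proposition follows at once from the uniqueness of Galois descent.
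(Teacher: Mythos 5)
Your high-level strategy is different from the paper's and is attractive in outline: trivialize the level-zero HDRF on a single finite \'etale Galois cover $Y_n\to X_n$, lift it (uniquely, and by properness algebraize it) to $\mathcal Y\to\mathcal X$, and then conclude by comparing descent data, provided the Lan--Sheng--Zuo/Faltings construction of $\rho_2$ is compatible with finite \'etale pullback and sends the trivialized unit-root object to the trivial representation, $G$-equivariantly. The paper instead never invokes such a global functoriality: it computes $\mathbf D$ directly on small opens, solving the Frobenius-invariance equation $\sigma(\mathbf x)=B\mathbf x$ in $B^+(R)/p^n$ by an inductive Artin--Schreier argument to show all solutions lie in $R^{ur}/p^n$, proving that the resulting bijection \eqref{eq:Phi=1} is $\Gamma$-equivariant, checking independence of the Frobenius lifting via the connection, and then gluing the resulting finite covers over $\mathcal X$.

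The gap is that the two compatibilities you defer are precisely the mathematical content of the proposition, and they are not obtainable ``by inspection.'' First, the claim that pulling back the HDRF along $\mathcal Y\to\mathcal X$ computes $\rho_2|_{\pi_1(\mathcal Y_K)}$, and that the identification is equivariant for $\Aut(\mathcal Y/\mathcal X)$, is a functoriality statement about the functor $\mathbf D$ of \cite{Fa1} (and its $W_n(\F_{p^f})$-enhanced version in \cite{LSZ}) that is not available off the shelf in the form you need; establishing it requires exactly the kind of local period-ring comparisons (compatibility of $\iota_F:R\to B^+(R)$ with finite extensions and with changes of Frobenius lifting) that occupy Steps~1--4 of the paper's proof. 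Second, even on the trivialized object, computing $\left(B^+(R)/p^n\otimes M\right)^{\sigma\otimes\Phi=1}$ and its Galois action is not a formal matter: the $\Gamma$-action on $B^+(R)\otimes_R H$ is \emph{not} the naive action on the first factor (the map $\iota$ is not $\Gamma$-equivariant, as the paper explicitly cautions in Step~3), and identifying the Frobenius invariants with $M$ requires showing that every solution of the fixed-point equation in $B^+(R)/p^n$ already lies in the unramified part $R^{ur}/p^n$ --- this is the inductive Artin--Schreier analysis of Step~2 and is where ``crystalline of Hodge--Tate weight zero implies unramified'' actually gets proved. So as written, your argument reduces the proposition to unproved claims whose verification is essentially the paper's proof; to make it complete you would either have to carry out that local analysis yourself (at which point the trivializing cover buys you little) or cite a precise \'etale-functoriality and unit-root computation for Faltings' functor, which neither \cite{Fa1} nor \cite{LSZ} states in the form you use.
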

\begin{proof} According to \cite[Theorem 5.3]{LSZ}, we can associate to the $f$-periodic HDRF $(E,\Fil_{tr},\ldots,\Fil_{tr},\phi)$ a Fontaine module $H:=(H,\nabla_H, \Fil_{tr}, \Phi)$ with $W_n(\mathbb F_{p^f})$-endomorphism structure $
\iota:W_n(\mathbb F_{p^f})\rightarrow \mathrm{End}(H,\nabla_H, \Fil_{tr}, \Phi)$. The proof is to examine carefully the construction of $\rho_2$, which is obtained by applying to $H$ the (covariant version of the) functor $\mathbf D$ of Faltings given in \cite{Fa1}. 
As in the proof of \cite[Theorem 5.3]{LSZ}, we view an $f$-periodic HDRF of level zero as an $1$-periodic HDRF of level zero endowed with $W_n(\mathbb F_{p^f})$-endomorphism structure, and similarly a $W_{n}(\mathbb F_{p^f})$-representation of a profinite group $\Gamma$ as a $W_n(\mathbb F_p)$-representation of $\Gamma$ with $W_n(\mathbb F_{p^f})$-endomorphism structure. Consequently, to prove our proposition, we reduce to the case where $f=1$. Thus $(E,0,\Fil_{tr},\phi)$ is an $1$-periodic HDRF on $X_n$, with $(H,\nabla_H,\Fil_{tr} ,\Phi)$ its associated Fontaine module. In concrete terms, $(E,0)$ is a Higgs vector bundle with trivial Higgs field on $X_n$, $
(H,\nabla_H)=C_{n}^{-1}(E,0)$,
\[
\phi: \Gr_{\Fil_{tr}}(H,\nabla_H)=(H,0)\longrightarrow (E,0)
\]
is an isomorphism of Higgs bundles, and $\Phi=C_n^{-1}(\phi)$. 
Before starting the proof which is divided into several steps, recall that the vector bundle $E$ on $X_n$ is endowed with a connection $\nabla_E$ by Lemma \ref{lem:existence-of-connection}.
\vskip 2mm

 {\itshape Step 1:} Let $\mathcal U\subset \mathcal X$ be an affine open subset of $\mathcal X$ with non-empty closed fiber, which is equipped with an \'etale morphism over $W$:
\[
\mathcal U\longrightarrow \Spec(W[t_1^{\pm 1},\ldots, t_d^{\pm 1}]).
\]
Let $\hat{\mathcal U}=\Spf(R)$ be the $p$-adic completion of $\mathcal U$, and let $F=F_{\mathcal U}$ be a Frobenius lifting on $\hat{\mathcal U}$ which is \'etale in characteristic in characteristic $0$ (for example, we may take $F$ such that $F^{\#}(t_i)=t_i^p$ for $1\leq i\leq d$). Let $U_n$ (resp. $F_n=F_{U_n}$) be the reduction modulo $p^n$ of $\mathcal U$ (resp. of $F$). Set
\[
(E_{\mathcal U},\nabla_{E_{\mathcal U}},\phi_{\mathcal U}):=(E,\nabla_E,\phi)|_{\mathcal U},\quad \textrm{and}\quad
(H_{\mathcal U},\nabla_{H_{\mathcal U}},\Phi_{\mathcal U}):=(H,\nabla_H,\Phi)|_{\mathcal U}.
\]
Using the Frobenius lifting $F$, we get a natural isomorphism $H_{\mathcal U}\stackrel{\simeq}{\ra} F^*E_{\mathcal U}$, so that $\phi_{\mathcal U}$ becomes an $F^{\#}$-semilinear endomorphism
\[
\phi_{\mathcal U,F}:E_{\mathcal U}\longrightarrow E_{\mathcal U}.
\]
Similarly, the bundle part of $C_{n}^{-1}(H_{\mathcal U},0)$ is identified with $F^*H_{\mathcal U}$, and then $\Phi_{\mathcal U}$ becomes an $F^{\#}$-semilinear endomorphism $\Phi_{\mathcal U,F}$ on $H_{\mathcal U}$. Under the identification $H_{\mathcal U}\simeq F^*E_{\mathcal U}$, the latter can be further identified with
\[
F^{\#}\otimes \phi_{\mathcal U,F}: F^*E_{\mathcal U}\longrightarrow F^*E_{\mathcal U}.
\]

Shrinking $\mathcal U$ if needed, we assume that $H_{\mathcal U}$ admits a flat basis $(m_i)_{1\leq i\leq r}$ relative to $\nabla_{H_{\mathcal U}}$ (Lemma \ref{flat basis}). So we get an invertible matrix $A\in \mathrm{GL}_r(R/p^n)$ such that
\[
\Phi_{\mathcal U,F}(1\otimes \mathbf m)=\mathbf m\cdot A,
\]
with $\mathbf m:=(m_1,\ldots, m_r)$.

 \vskip 2mm

 {\itshape Step 2:} Let $\xi\in \mathcal X$ be the generic point of the closed fiber of $\mathcal X/W$, so that $\mathcal{O}_{\mathcal X,\xi}$ is a DVR with maximal ideal $(p)\subset \mathcal O_{\mathcal X,\xi}$. Fix an algebraic closure $\Omega$ of the quotient field of $\hat{\mathcal O}_{\mathcal X,\xi}$, the $p$-adic completion of $\mathcal{O}_{\mathcal X,\xi}$. Since $\xi\in \mathcal U$, we have a natural inclusion $\mathcal O_{\mathcal X}(\mathcal U)\hookrightarrow \mathcal O_{\mathcal X,\xi}$. On passing to $p$-adic completions, we find
\[
R=\mathcal O_{\mathcal X}(\mathcal U)^{\hat{}}\subset \hat{\mathcal O}_{\mathcal X,\xi}\subset \Omega.
\]
Let $\bar R$ be the union of all finite extensions of $R$ contained in $\Omega$ that are \'etale in characteristic zero, with $\hat{\bar R}$ its $p$-adic completion. Let $R^{ur}\subset \bar R$ be the maximal subextension of $\bar R/R$ that is \'etale everywhere. Recall that the period ring $B^+(R)$ used in \cite{Fa1} (and in \cite{LSZ}) is endowed with a Frobenius $\sigma:B^+(R)\ra B^+(R)$. As $F$ is \'etale in characteristic $0$, there is a morphism
 \[
 \iota=\iota_F:R\longrightarrow B^+(R)
 \]
 compatible with the Frobenius morphisms (\cite[page 36]{Fa1}). Furthermore, the Frobenius lifting $F$ extends uniquely to a Frobenius lifting on $R^{ur}$, and thus $\iota $ above extends to a morphism $R^{ur}\ra B^{+}(R)$, still denoted by $\iota$ in the following, which is Frobenius-compatible and makes commutative the following diagram
\[
\xymatrix{B^+(R)\ar[r]^{\theta}& \hat{\bar R} \\ R^{ur} \ar@{^(->}[u]^{\iota}\ar@{^(->}[ru]_{\textrm{natural inclusion}} & }.
\]
Here $\theta:B^+(R)\ra \hat{\bar R}$ is the natural morphism of rings defined by Fontaine.

Consider
\[
B^+(R)\otimes_{R} H_{\mathcal U}=B^+(R)/p^n\otimes_{R/p^n} H_{\mathcal U}
\]
which is endowed with the Frobenius $\sigma\otimes \Phi_{\mathcal U,F}$. The Frobenius-invariants
\begin{equation*}\label{eq:Frobenius-invariants}
\mathbf D(H_{\mathcal U})_F:=\left(B^+(R)/p^n\otimes_{R/p^n}H_{\mathcal U}\right)^{\sigma\otimes\Phi_{\mathcal U,F}=1}
\end{equation*}
is of form $(1\otimes \mathbf m)\cdot \mathbf x$, with $\mathbf x$ a column vector with entries in $B^+(R)/p^n$ satisfying
\[
A\cdot \sigma(\mathbf x)=\mathbf x.
\]
To describe $\mathbf{D}(H_{\mathcal U})_F$, one needs to solve the equation below in $B^+(R)/p^n$:
\begin{equation}\label{frobenius invariant}
\sigma(\mathbf x)=B\cdot \mathbf x,
\end{equation}
with $B=A^{-1}\in \mathrm{GL}_r(R/p^n)$, viewed as an element of $\mathrm{GL}_r(B^+(R)/p^n)$ via $\iota$. As usual one first solves the equation \eqref{frobenius invariant} modulo $p$ and then lifts the solutions to higher $p$-power. In the first step, one reduces to solving
\begin{equation}\label{Kummer equation}
\mathbf x^p=B_1\cdot \mathbf x,\quad \textrm{with}\ B_1=B \ \mathrm{mod}\  p.
\end{equation}
Since $B_1$ is invertible with entries in $R/p\stackrel{\iota}{\hookrightarrow} B^+(R)/p$, \eqref{Kummer equation} is an Artin-Schreier equation in characteristic $p$. So the entries of all the solutions $\mathbf x_1$ of \eqref{Kummer equation} lie in $R^{ur}/p\stackrel{\iota}{\hookrightarrow}B^+(R)/p$. Assume that we have obtained all the solutions of \eqref{frobenius invariant} modulo $p^{i}$ with $1\leq i\leq n-1$, and that the entries of them are contained in $R^{ur}/p^{i}\stackrel{\iota}{\hookrightarrow}B^+(R)/p^{i}$. Let $\mathbf x_i$ be one such solution. In the second step, one takes an arbitrary lifting $\mathbf x'$ of $\mathbf x_{i}$ with entries contained in $R^{ur}/p^{i+1}\stackrel{\iota}{\hookrightarrow}B^+(R)/p^{i+1}$, and looks for solutions of the from $\mathbf x'+p^{i}\mathbf y \mod p^{i+1}$ of the equation (\ref{frobenius invariant}) modulo $p^{i+1}$, which amounts to solving the next equation in $B^+(R)/p$:
\begin{equation}\label{Artin-Schreier}
\mathbf y^p-B_1\cdot \mathbf y=\mathbf z \ \mathrm{mod}\ p,
\end{equation}
where $\mathbf z$ is a vector with entires in $R^{ur}/p^{i+1}$ with $p^i\mathbf z=B\mathbf x'-F(\mathbf x')$: note that $\mathbf x_{i}=\mathbf x' \ \mathrm{mod}\ p^{i}$ is a solution of \eqref{frobenius invariant} modulo $p^i$, the entries of $B\mathbf x'-F(\mathbf x')$ is divisible by $p^i$ in $R^{ur}/p^{i+1}$. Since $B_1$ is invertible, \eqref{Artin-Schreier} is again an Artin-Schreier equation in characteristic $p$, whose solutions have entries contained in $R^{ur}/p$. It follows that all the solutions of \eqref{frobenius invariant} in $B^+(R)/p^{i+1}$ have entries in $R^{ur}/p^{i+1}$. Thus, inductively we see that  the entries of the solutions of \eqref{frobenius invariant} are contained entirely in $R^{ur}/p^n$, and the induced map
\begin{equation}\label{eq:Phi=1}
H_{\mathcal U}^{\Phi_{\mathcal U,F}=1}:=
(R^{ur}\otimes_RH_{\mathcal U})^{F\otimes\Phi_{\mathcal U,F}=1}\longrightarrow \mathbf{D}(H_{\mathcal U})_F
\end{equation}
is bijective. In particular, $\mathbf D(H_{\mathcal U})_F$ is a finite free $\mathbb Z/p^n\mathbb Z$-module.
 \vskip 2mm

{\itshape Step 3:} Let $
\Gamma:=\mathrm{Gal}(\bar R/R)$, and $\Gamma^{ur}:=\mathrm{Gal}(R^{ur}/R)$. So $\Gamma$ can be identified with the fundamental group $\pi_1(\hat{\mathcal U}_K, \bar \eta)$ of the generic fiber $\hat{\mathcal U}_K$ of $\hat{\mathcal U}$ relative to the geometric base point $\bar \eta$ defined by the inclusion $R\hookrightarrow \Omega$, and $\Gamma^{ur}$ is naturally a quotient of $\Gamma$. As in \cite[page 37]{Fa1}, $\Gamma$ acts on $B^+(R)\otimes_R H_{\mathcal U}$, from where $\mathbf D(H_{\mathcal U})_F$ inherits an action of $\Gamma$. We caution the readers that this action of $\Gamma$ on $B^+(R)\otimes_R H_{\mathcal U}$ is not the obvious action of $\Gamma$ on the first factor: in fact the latter is not well-defined as the morphism $\iota:R\ra B^+(R)$ is not $\Gamma$-equivariant.

We claim that the bijection \eqref{eq:Phi=1} is $\Gamma$-equivariant. To check this, observe that the natural map $\theta:B^+(R)\ra \hat{\bar R}$ induces a morphism
\[
B^+(R)\otimes_R H_{\mathcal U}\longrightarrow \hat{\bar R}\otimes_R H_{\mathcal U}
\]
which is $\Gamma$-equivariant: here $\Gamma$ acts on $\hat{\bar R}\otimes_R H_{\mathcal U}$ through its natural action on the first factor. Consider the following commutative diagram
\[
\xymatrix{R^{ur}\otimes H_{\mathcal U} \ar[r] & B^+(R)\otimes_R H_{\mathcal U} \ar[r] & \hat{\bar R}\otimes_R H_{\mathcal U} \\ H_{\mathcal U}^{\Phi_{\mathcal U}=1}\ar[r]^{\eqref{eq:Phi=1}}_{\simeq} \ar@{^(->}[u] & \mathbf D(H_{\mathcal U})\ar@{^(->}[u] \ar[ru]_{\alpha}& }.
\]
The composition of the upper horizontal maps is $
R^{ur}\otimes H_{\mathcal U}\rightarrow  \hat{\bar R}\otimes_R H_{\mathcal U}$,
which is injective and $\Gamma$-equivariant. Since the lower horizontal map \eqref{eq:Phi=1} is bijective, the map $\alpha$ is injective (and $\Gamma$-equivariant). Therefore, \eqref{eq:Phi=1} is $\Gamma$-equivariant. In particular, the action of $\Gamma$ on $\mathbf D(H_{\mathcal U})$ factors through the quotient $\Gamma\twoheadrightarrow \Gamma^{ur}$.

On the other hand, the natural map $E_{\mathcal U}\ra H_{\mathcal U}\simeq F^*E_{\mathcal U}$ induces an isomorphism
\[
E_{\mathcal U}^{\phi_{\mathcal U,F}=1}\stackrel{\sim}{\longrightarrow} H_{\mathcal U}^{\Phi_{\mathcal U,F}=1}
\]
of $W_{n}(\mathbb F_p)$-representations of $\Gamma^{ur}$. As a result, we obtain a natural isomorphism
\begin{equation}\label{eq:local-iso}
E_{\mathcal U}^{\phi_{\mathcal U,F}=1}\stackrel{\sim}{\longrightarrow}\mathbf D(H_{\mathcal U})_F
\end{equation}
of $W_n(\mathbb F_p)$-representations of $\Gamma^{ur}$. As $H_{\mathcal U}\simeq F^*E_{\mathcal U}$, we have also
\[
\mathbf D(H_{\mathcal U})_F=(B^+(R)\otimes_{\iota, R}H_{\mathcal U})^{\sigma\otimes \Phi_{\mathcal U,F}=1}\simeq (B^+(R)\otimes_{\iota\circ F,R}E_{\mathcal U})^{\sigma\otimes \phi_{\mathcal U,F}=1}.
\]

\vskip 2mm

{\itshape Step 4:} Assume that $\mathcal U$ is endowed with a second Frobenius lifting $F'$. Let $\phi_{\mathcal U,F'}$ be the corresponding $F^{'\#}$-semilinear endomorphism on $E_{\mathcal U}$, and $\iota'=\iota_{F'}:R\ra B^+(R)$ the Frobenius-compatible map constructed from $F'$. We have seen in the proof of Theorem \ref{equivalence in HT zero case} that
\[
E_{\mathcal U}^{\phi_{\mathcal U,F}=1}=E_{\mathcal U}^{\phi_{\mathcal U,F'}=1}\subset E_{\mathcal U}^{\nabla_{E_{\mathcal U}}},
\]
from where we deduce a commutative diagram
\[
\xymatrix{E_{\mathcal U}^{\phi_{\mathcal U,F}=1}\ar@{^(->}[r]\ar@{=}[d] & E_{\mathcal U}\ar[r] & F^*E_{\mathcal U}\ar[r]\ar[d]^{\tau} & B^+(R)\otimes_{\iota\circ F, R}E_{\mathcal U}\ar[d]^{\tau} \\ E_{\mathcal U}^{\phi_{\mathcal U,F'}=1}\ar@{^(->}[r] & E_{\mathcal U}\ar[r] & {F'}^*E_{\mathcal U}\ar[r] & B^+(R)\otimes_{\iota'\circ F',R}E_{\mathcal U}  },
\]
where $\tau$ is the usual isomorphism defined using the connection $\nabla_{E_{\mathcal U}}$ on $E_{\mathcal U}$. Taking Frobenius-invariants we obtain the following commutative diagram
\[
\xymatrix{E_{\mathcal U}^{\phi_{\mathcal U,F}=1}\ar[r]^<<<<<{\eqref{eq:local-iso}}\ar@{=}[d] & (B^+(R)\otimes_{\iota\circ F, R}E_{\mathcal U})^{\sigma\otimes \phi_{\mathcal U,F}=1}\simeq \mathbf D(H_{\mathcal U})_F\ar[d]^{\tau} \\ E_{\mathcal U}^{\phi_{\mathcal U,F'}=1}\ar[r]^<<<<<{\eqref{eq:local-iso}} & (B^+(R)\otimes_{\iota'\circ F',R}E_{\mathcal U} )^{\sigma\otimes \phi_{\mathcal U,F'}=1}\simeq \mathbf D(H_{\mathcal U})_{F'} }.
\]
In particular, up to a canonical isomorphism, the $(\mathbb Z/p^n\mathbb Z)[\Gamma]$-module $\mathbf D(H_{\mathcal U})_F$ does not depend on the choice of the Frobenius lifting $F$.

\vskip 2mm

 {\itshape Step 5:} Finally, let $\mathcal X=\bigcup_i\mathcal U_i$ be a cover of $\mathcal X$ by its small affine open subsets. For each $i$, let $F_i$ be a Frobenius lifting on $\hat{\mathcal U}_i$, the $p$-adic completion of $\mathcal U_i$. For each $i$, let $\Gamma_i$ be the fundamental group of $\hat{\mathcal U}_{i,K}$ relative to the geometric base point given by the inclusion $\mathcal O_{\mathcal X}(\mathcal U_i)^{\hat{}}\hookrightarrow \Omega$. Using the action of $\Gamma_i$ on the finite $\Z/p^n\Z$-module $\mathbf D(H_{\mathcal U_i})_{F_i}$ in Step 3, the latter corresponds naturally a finite \'etale cover of $\hat{\mathcal U}_{i,K}$, and thus (by taking normalization) a finite morphism of formal schemes over $W$
\[
\mathsf Z_{\mathcal U_i,F_i}\longrightarrow \hat{\mathcal U_i}
\]
that is \'etale in characteristic $0$. Moreover, according to Step 4, these finite morphisms for various $i$ can be glued naturally to a finite morphism $\mathsf Z\ra \hat{\mathcal X}$, which, by properness of $\mathcal X/W$, comes from a finite morphism
\[
\mathcal Z\longrightarrow \mathcal X
\]
that is \'etale in characteristic $0$. By definition, $\rho_2$ is the associated $\mathbb Z/p^n\mathbb Z$-representation of $\pi_1(\mathcal X_K)$ of the finite \'etale cover $\mathcal Z_K\ra \mathcal X_K$. On the other hand,
let $\mathcal Y\ra \mathcal X$ be the finite \'etale cover corresponding to $\rho_1$. The $\Gamma$-equivariant isomorphism \eqref{eq:local-iso} gives rise to an isomorphism over $\mathcal U_i$
\[
\mathcal Y\times_{\mathcal X}\mathcal U_i\stackrel{\sim}{\longrightarrow} \mathcal Z\times_{\mathcal X} \mathcal U_i.
\]
By Step 4, these local isomorphisms for various $i$ glue to an $\mathcal X$-isomorphism
\[
\mathcal Y\stackrel{\sim}{\longrightarrow}\mathcal Z.
\]
Consequently, $\rho_2$ factors through the quotient $sp:\pi_1(\mathcal X_K)\twoheadrightarrow \pi_1(X)$ and the resulting representation of $\pi_1(X)$ is isomorphic to $\rho_1$.
\end{proof}

\begin{corollary}\label{cor:byproduct}
Let $\mathcal X$ be a smooth proper scheme over $W$, with $X$ its special fiber and $\mathcal X_K$ its generic fiber. Let $\mathbb L_K$ be a $\Z/p^n$-\'etale local system on $\mathcal X_K$. Then, the following statements are equivalent:
\begin{enumerate}
\item $\mathbb L_K$ is crystalline of Hodge-Tate weight $0$;
\item $\mathbb L_K$ is the generic fiber of a $\Z/p^n$-\'etale local system on $\mathcal X$. In other words, the corresponding $\Z/p^n$-representation of $\pi_1(\mathcal X_K)$ factors through the quotient $\pi_1(\mathcal X_K)\twoheadrightarrow \pi_1(X)$.
\end{enumerate}
In particular, the functor ${\bf D}$ in \cite[Theorem 2.6]{Fa1} induces an equivalence of the category of strict $p^n$-torsion Fontaine modules of Hodge-Tate weight
zero and the category of crystalline $W_n(\F_p)$-representations of $\pi_1(X)$.
\end{corollary}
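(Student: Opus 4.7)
The plan is to combine three ingredients that have all been put in place in the excerpt: (i) the correspondence of \cite{LSZ} (Theorem 5.3 in loc.\ cit., used in the proof of Proposition \ref{factor through specialization map Part 1}) between strict $p^n$-torsion Fontaine modules on $\mathcal X$ and $f$-periodic HDRFs on $X_n$, which, when restricted to Hodge--Tate weight zero on one side and level zero on the other, is an equivalence since the two conditions both say literally $\Fil^0=H$ and $\Fil^1=0$; (ii) Theorem \ref{equivalence in HT zero case}, which identifies $1$-periodic HDRFs of level zero over $X_n$ with continuous $W_n(\F_p)=\Z/p^n$-representations of $\pi_1(X)$; and (iii) Proposition \ref{factor through specialization map Part 1}, which says the two representations attached to one such HDRF---the $\pi_1(X)$-representation from (ii) and the $\pi_1(\mathcal X_K)$-representation obtained via Faltings's $\mathbf D$---are related by the specialization map $sp:\pi_1(\mathcal X_K)\twoheadrightarrow \pi_1(X)$. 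I will also use the standard isomorphism $\pi_1(\mathcal X)\cong \pi_1(X)$, valid because $\mathcal X/W$ is proper.

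For $(1)\Rightarrow (2)$, I would start from a crystalline $\mathbb L_K$ of Hodge--Tate weight zero, invoke Faltings's theorem to realize $\mathbb L_K=\mathbf D(H)$ for some strict $p^n$-torsion Fontaine module $H$ on $\mathcal X$ with trivial Hodge filtration, convert $H$ via \cite{LSZ} into a $1$-periodic HDRF of level zero $(E,0,\Fil_{tr},\phi)$, and then read off from Proposition \ref{factor through specialization map Part 1} that the representation of $\pi_1(\mathcal X_K)$ corresponding to $\mathbb L_K$ factors through $sp$; the quotient representation of $\pi_1(X)\cong \pi_1(\mathcal X)$ is the desired extension of $\mathbb L_K$ to $\mathcal X$. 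Conversely, for $(2)\Rightarrow (1)$, a $\Z/p^n$-local system on $\mathcal X$ gives a $\Z/p^n$-representation of $\pi_1(X)$, which by Theorem \ref{equivalence in HT zero case} corresponds to a $1$-periodic HDRF of level zero on $X_n$, and then by \cite{LSZ} to a strict $p^n$-torsion Fontaine module $H$ of Hodge--Tate weight zero; Proposition \ref{factor through specialization map Part 1} identifies $\mathbf D(H)$ with the original $\mathbb L_K$, so $\mathbb L_K$ is crystalline of Hodge--Tate weight $0$.

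The ``in particular'' statement is then a formal consequence: composing the two equivalences
\[
\{\text{strict $p^n$-torsion Fontaine modules of HT weight 0}\}\;\longleftrightarrow\; \mathrm{HDR}_{0,1}(X_n/W_n)\;\longleftrightarrow\; \Rep_{\Z/p^n}(\pi_1(X))
\]
from \cite{LSZ} and Theorem \ref{equivalence in HT zero case} yields an equivalence that agrees with Faltings's $\mathbf D$ (followed by restriction to $\pi_1(X)\subset \pi_1(\mathcal X_K)$ via $sp$) thanks to Proposition \ref{factor through specialization map Part 1}; the essential image is precisely the crystalline subcategory by the equivalence of (1) and (2) just established. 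The only point I anticipate having to handle with care is that the \cite{LSZ} correspondence must be invoked for the fibered system $\{X_n\}$ rather than for a single $X_n$, so that both the Fontaine-module side and the HDRF side live on $\mathcal X$ in the compatible sense of \S \ref{subsection:HDRF0}; once this level-zero $\Leftrightarrow$ HT-weight-zero dictionary is set up, no further calculation is needed.
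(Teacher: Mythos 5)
Your proposal is correct and follows essentially the same route as the paper, which proves the corollary simply by combining Theorem \ref{equivalence in HT zero case} and Proposition \ref{factor through specialization map Part 1} (with the \cite{LSZ} Fontaine-module/HDRF dictionary implicit in the construction of $\rho_2$). You merely spell out the two implications and the ``in particular'' statement in more detail than the paper does, including the correct observation that level zero on the HDRF side matches Hodge--Tate weight zero on the Fontaine-module side.
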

\begin{proof} This combines Theorem \ref{equivalence in HT zero case} and Proposition \ref{factor through specialization map Part 1}.
\end{proof}

\section{Deformation of HDRFs of level zero}

Let $k$ be a perfect field of characteristic $p$. Let $W:=W(k)$ be the ring of Witt vectors with coefficients in $k$, with $W_m$ its reduction modulo $p^m$ for every $m\in \mathbb N$.  Let $n\geq 1$ be an integer. Let $X$ be a connected smooth $k$-scheme, equipped with a geometric point $\bar x$. Suppose that $X/k$ can be lifted to a smooth $W_{n+1}$-scheme $X_{n+1}$ and write $X_n=X_{n+1}\otimes_{W_{n+1}}W_n$. Let
\[
\rho:\pi_1(X,\bar x)\longrightarrow \mathbf{GL}_d(\Z/p^n\Z)
\]
be a continuous $\Z/p^n\Z$-representation of $\pi_1(X,\bar x)$, with $(E,\Fil_{tr},\phi)$ the corresponding $1$-periodic HDRF of level zero on $X_n$ given by Theorem \ref{equivalence in HT zero case}. In the following, we consider a part of the deformation theory of $\rho$, or equivalently of the level-zero HDRF $(E,\Fil_{tr},\phi)$, and discuss its relation with the deformation theory of the vector bundle $E$. We also draw the attention of the readers to a very recent work of Krishnamoorthy-Yang-Zuo (\cite{KYZ}), where a deformation theory for HDRFs of general levels is developed in a different context.

\begin{remark}\label{rem:deformation-of-repns} Let us briefly recall some deformation theory of the representations of profinite groups \textcolor{black}{(see \cite[\S~1]{Ma} for a detailed discussion)}. Let $(R,\mathfrak m)\ra (R_0,\mathfrak m_0)$ be a surjective morphism of local Artinian rings, with $\kappa$ their common residue field. Let $\mathfrak a=\ker(R\ra R_0)$. Suppose $\mathfrak a\mathfrak m=0$.  Let $\Pi$ be a profinite group, and
\[
\gamma:\Pi \longrightarrow\mathbf{GL}_d(R_0)
\]
a continuous $R_0$-representation of $\Pi$, with $\bar{\gamma}:\Pi\ra\mathbf{GL}_d(\kappa)$ \textcolor{black}{the reduction} modulo $\mathfrak m_0$. We would like to consider deformations of $\gamma$ over $R$, i.e., continuous $R$-representations $\Pi\ra \mathbf{GL}_d(R)$ of $\Pi$ lifting $\rho$.

(1) Let $\tilde{\gamma}:\Pi\rightarrow \mathbf{GL}_d(R)$
be a set-theoretic continuous lifting of $\gamma$. Set
\[
c:\Pi\times \Pi\longrightarrow 1+\mathbf{M}_d(\mathfrak a)\subset \mathbf{GL}_d(R), \quad (g_1,g_2)\mapsto \tilde{\gamma}(g_1g_2)\tilde{\gamma}(g_2)^{-1}\tilde{\gamma}(g_1)^{-1}.
\]
Then the map $c-1$ is a $2$-cocycle of $\Pi$ with coefficients in the $\Pi$-module
\[
\mathbf M_d(\mathfrak a)\simeq \mathbf M_d(\kappa)\otimes_{\kappa}\mathfrak a\simeq \mathrm{ad}(\bar{\gamma})\otimes_{\kappa}\mathfrak a.
\]
The corresponding cohomology class in $H^2(\Pi, \mathrm{ad}(\bar{\gamma}))\otimes \mathfrak a$, written $\mathrm{ob}(\gamma)$ in the sequel, does not depend on the choice of $\tilde{\gamma}$, and $\mathrm{ob}(\gamma)=0$ if and only if $\gamma$ can be deformed to to an $R$-representation of $\Pi$. In other words, $\mathrm{ob}(\gamma)$ is the obstruction for the existence of a deformation of $\gamma$ over $R$.

(2) The set of isomorphism classes of deformations of $\rho$ over $R$ is a torsor under $H^1(\Pi, \mathrm{ad}(\bar{\rho}))\otimes \mathfrak a$. More precisely, let $
\tilde{\gamma} , \tilde{\gamma}':\Pi\rightarrow \mathbf{GL}_d(R)$ be two deformations of $\gamma$ over $R$. Set
\[
e:\Pi\longrightarrow 1+\mathbf{M}_d(\mathfrak a)\subset \mathbf{GL}_d(R), \quad g\mapsto \tilde{\gamma}(g)\tilde{\gamma}'(g)^{-1}.
\]
Then the map $e-1$ is a $1$-cocycle of $\Pi$ with coefficients in $\mathrm{ad}(\bar{\rho})\otimes \mathfrak a$, and the corresponding class in $H^1(\Pi, \mathrm{ad}(\bar{\gamma}))\otimes \mathfrak a$ depends only on the isomorphism classes of $\tilde{\gamma}$ and $\tilde{\gamma}'$. Conversely, let $w\in H^1(\Pi,\mathrm{ad}(\bar{\gamma}))\otimes \mathfrak a$ and write $w=[\tilde e-1]$ with
\[
\tilde e: \Pi\longrightarrow 1+\mathbf{M}_d(\mathfrak a)\subset \mathbf{GL}_d(R).
\]
Then $g\mapsto \tilde{e}\tilde{\gamma}(g)\in \mathbf{GL}_d(R)$ gives another deformation of $\rho$ over $R$, whose isomorphism class depends only on $w$.

(3) Finally, for $\tilde{\gamma}$ a deformation of $\gamma$ over $R$, its group of automorphisms, i.e., $\Pi$-equivariant $R$-linear isomorphisms $R^d\stackrel{\sim}{\ra}R^d$ reducing to $\mathrm{id}_{R_0^d}$ modulo $\mathfrak a$, is naturally identified with $H^0(\Pi, \mathrm{ad}(\bar{\gamma}))\otimes \mathfrak a$.
\end{remark}

Let $\mathfrak a=\ker(\Z/p^{n+1}\Z\ra \Z/p^n\Z)$. Let $\L$ be the (\'etale) $\Z/p^n\Z$-local system corresponding to $\rho$. Write $\bar{\rho}$ and $\overline{\L}$ the reductions modulo $p$ of $\rho$ and $\L$ respectively. As representations of $\pi_1(X,\bar x)$, we have $\mathrm{ad}(\bar{\rho})=\mathrm{End}(\overline{\L}_{\bar x})\simeq \mathcal End(\overline{\L})_{\bar x}$. Furthermore, there is a canonical injective map
\[
H^2(\pi_1(X,\bar x),\mathcal End(\overline{\L})_{\bar x})\simeq H^2(X_{\rm fet},\mathcal End(\overline{\L}))\longrightarrow H^2(X_{\rm et},\mathcal End(\overline{\L})).
\]
In the following, we still denote by $\mathrm{ob}(\rho)$ the image of $\mathrm{ob}(\rho)\in H^2(\pi_1(X,\bar x), \mathrm{ad}(\bar{\rho}))$ via the induced injective map below
\[
H^2(\pi_1(X,\bar x),\mathrm{ad}(\bar{\rho}))\otimes \mathfrak a\longrightarrow H^2(X_{\rm et},\mathcal End(\overline{\L}))\otimes \mathfrak a.
\]
Thus $\mathrm{ob}(\rho)=0$ if and only $\rho$ can be deformed over $\Z/p^{n+1}\Z$, or equivalently, there is a $\Z/p^{n+1}\Z$-local system $\widetilde{\L}$ so that $\widetilde{\L}\otimes_{\Z/p^{n+1}\Z}\Z/p^n\Z\simeq \L$.

Let $(\bar E,\Fil_{tr},\bar{\phi})$ be the reduction modulo $p$ of $(E,\Fil_{tr},\phi)$, which is also the level-zero HDRF on $X$ corresponding to $\bar{\rho}$. So $(\bar E,\bar{\phi})$ is Frobenius-periodic vector bundle of period $1$ on $X$ such that $\overline{\L}\simeq \bar{E}^{\bar{\phi}=1}$. Moreover, the sheaf $\mathcal{E}nd(\bar E)$ of endomorphisms of $\bar E$ is a vector bundle on $X$, endowed with a Frobenius
\[
\Phi: \mathcal{E}nd(\bar E) \longrightarrow \mathcal{E}nd(\bar E), \quad f\mapsto \bar \phi\circ F^*(f)\circ \bar{\phi}^{-1}.
\]
One checks that the canonical map
\[
\mathcal End(\bar E)^{\Phi=1}\longrightarrow \mathcal End(\bar E^{\bar{\phi}=1}), \quad f\mapsto f|_{\bar{E}^{\bar{\phi}=1}}
\]
is an isomorphism, giving thus a short exact sequence in the \'etale topology:
\[
0\longrightarrow \mathcal End(\overline{\L})\longrightarrow \mathcal End(\bar E)\stackrel{1-\Phi}{\longrightarrow} \mathcal {E}nd(\bar E)\longrightarrow 0.
\]
Taking cohomology we get the exact sequence below
\begin{eqnarray*}
0\longrightarrow H^0(X_{\rm et},\mathcal End(\overline{\L}))\otimes \mathfrak a\stackrel{\alpha^0}{\longrightarrow} H^0(X,\mathcal End(\bar E))\otimes \mathfrak a\stackrel{1-\Phi}{\longrightarrow}H^0(X,\mathcal End(\bar E))\otimes \mathfrak a\stackrel{\beta^0}{\longrightarrow} \\
H^1(X_{\rm et},\mathcal End(\overline{\L}))\otimes \mathfrak a\stackrel{\alpha^1}{\longrightarrow} H^1(X,\mathcal End(\bar E))\otimes \mathfrak a\stackrel{1-\Phi}{\longrightarrow}H^1(X,\mathcal End(\bar E))\otimes \mathfrak a\stackrel{\beta^1}{\longrightarrow} \\  H^2(X_{\rm et},\mathcal End(\overline{\L}))\otimes \mathfrak a\stackrel{\alpha^2}{\longrightarrow} H^2(X,\mathcal End(\bar E))\otimes \mathfrak a\stackrel{1-\Phi}{\longrightarrow}H^2(X,\mathcal End(\bar E))\otimes \mathfrak a.
\end{eqnarray*}
On the other hand, like the deformation theory for representations of profinite groups (see Remark \ref{rem:deformation-of-repns}), there is an obstruction class $\mathrm{ob}(E)\in H^2(X,\mathcal End(\bar E))\otimes \mathfrak a $
so that $\mathrm{ob}(E)=0$ if and only if $E$ can be deformed to a vector bundle on $X_{n+1}$. Moreover the set $\mathrm{Def}(E/X_{n+1})$ of isomorphisms classes of deformations of $E$ over $X_n$ is a torsor under $H^1(X,\mathcal End(\bar E))\otimes \mathfrak a$.

\begin{proposition}\label{prop:obstruction-class} Keep the notation above. Then $\alpha^2(\mathrm{ob}(\rho))=\mathrm{ob}(E)$.
\end{proposition}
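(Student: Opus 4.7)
The plan is to represent both obstruction classes as Čech 2-cocycles on a common refinement of a Zariski cover of $X_{n+1}$ and an \'etale cover of $X$, and to verify that $\alpha^2$ identifies them at the level of representatives.

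First cover $X_{n+1}$ by a family $\mathfrak U=\{U_i\}_{i\in I}$ of small affine opens, each equipped with a Frobenius lifting $F_i:U_i\to U_i$. By Proposition~\ref{Frobenius-periodic bundles}, the restriction of $(E,\Fil_{tr},\phi)$ to $U_{i,n}:=U_i\otimes_{W_{n+1}}W_n$ corresponds to a Frobenius-periodic pair $(E|_{U_{i,n}},\phi_{F_i})$. Affineness of $U_i$ allows one to lift the vector bundle $E|_{U_{i,n}}$ to a vector bundle $\tilde E_i$ on $U_i$, and then to lift $\phi_{F_i}$ to an isomorphism $\tilde \phi_i:F_i^*\tilde E_i\stackrel{\sim}{\to}\tilde E_i$, producing a Frobenius-periodic lift on $U_i$. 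Theorem~\ref{equivalence in HT zero case} then attaches to $(\tilde E_i,\tilde \phi_i)$ a $\Z/p^{n+1}\Z$-local system $\tilde \L_i$ on $U_{i,1}$ lifting $\L|_{U_{i,1}}$.

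For $\mathrm{ob}(E)$, for each pair $i<j$ choose an $\mathcal O_{U_{ij}}$-linear isomorphism $\alpha_{ij}:\tilde E_i|_{U_{ij}}\stackrel{\sim}{\to}\tilde E_j|_{U_{ij}}$ lifting the identity on $E|_{U_{ij,n}}$, again possible by affineness. The 2-cochain $\eta^E_{ijk}:=\alpha_{jk}\circ\alpha_{ij}\circ\alpha_{ik}^{-1}-\mathrm{id}\in \mathcal End(\bar E)(U_{ijk})\otimes \mathfrak a$ is a Čech cocycle whose class is $\mathrm{ob}(E)$. For $\mathrm{ob}(\rho)$, I refine $\{U_{i,1}\}_i$ to an \'etale cover $\mathfrak V=\{V_\beta\to X\}_\beta$ with each $V_\beta\to U_{i(\beta),1}$, fine enough that for each pair $(\beta,\gamma)$ the two lifts $\tilde\L_{i(\beta)}|_{V_{\beta\gamma}}$ and $\tilde\L_{i(\gamma)}|_{V_{\beta\gamma}}$ of $\L|_{V_{\beta\gamma}}$ admit a lift-of-identity isomorphism $\beta_{\beta\gamma}$, which is always achievable by further \'etale localization since both sides are $\Z/p^{n+1}\Z$-lifts of a common $\Z/p^n\Z$-local system. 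The cocycle $\eta^\rho_{\beta\gamma\delta}:=\beta_{\gamma\delta}\circ\beta_{\beta\gamma}\circ\beta_{\beta\delta}^{-1}-\mathrm{id}\in \mathcal End(\overline\L)(V_{\beta\gamma\delta})\otimes \mathfrak a$ represents $\mathrm{ob}(\rho)$ in $H^2(X_{\mathrm{et}},\mathcal End(\overline\L))\otimes \mathfrak a$.

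The comparison is the crux of the proof: each $V_{\beta\gamma}$ lifts uniquely to an \'etale $X_{n+1}$-scheme $\tilde V_{\beta\gamma}$ inheriting a Frobenius lifting from $F_{i(\beta)}$, so Theorem~\ref{equivalence in HT zero case} transforms $\beta_{\beta\gamma}$ into a morphism $\tilde\alpha_{\beta\gamma}:\tilde E_{i(\beta)}|_{\tilde V_{\beta\gamma}}\stackrel{\sim}{\to}\tilde E_{i(\gamma)}|_{\tilde V_{\beta\gamma}}$ of HDRFs, whose underlying $\mathcal O$-linear bundle map still lifts the identity on $E|_{V_{\beta\gamma,n}}$. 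Under the defining sheaf inclusion $\mathcal End(\overline\L)\hookrightarrow \mathcal End(\bar E)$ of $\alpha^2$, the image of $\eta^\rho$ is precisely the cocycle $\{\tilde\alpha_{\gamma\delta}\tilde\alpha_{\beta\gamma}\tilde\alpha_{\beta\delta}^{-1}-\mathrm{id}\}$ in $\mathcal End(\bar E)$. Refining $\eta^E$ along $\mathfrak V\to \mathfrak U$ yields a 2-cocycle built from the pullbacks of the $\alpha_{ij}$'s; since any two lift-of-identity bundle isomorphisms on $V_{\beta\gamma}$ differ by a 1-cochain in $\mathcal End(\bar E)\otimes \mathfrak a$ whose Čech differential is exactly the difference of the resulting 2-cocycles, both represent the same class in $H^2(X,\mathcal End(\bar E))\otimes \mathfrak a$, giving $\alpha^2(\mathrm{ob}(\rho))=\mathrm{ob}(E)$. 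The main technical obstacle is this last step: one must verify that the equivalence of Theorem~\ref{equivalence in HT zero case}, constructed by gluing Katz's local construction over charts with chosen Frobenius liftings, is natural enough to transport a lift-of-identity isomorphism of local systems on $V_{\beta\gamma}$ into a genuine lift-of-identity bundle isomorphism on $\tilde V_{\beta\gamma}$, so that the identification of the Čech cocycles truly holds modulo a coboundary and not merely in the derived sense.
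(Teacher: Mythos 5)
There is a genuine gap at the crux of your argument. What you actually construct and compare is the \v{C}ech-theoretic obstruction to lifting the local system $\L$ to a $\Z/p^{n+1}\Z$-local system --- call it $\mathrm{ob}(\L)\in H^2(X_{\rm et},\mathcal End(\overline{\L}))\otimes\mathfrak a$ --- and you show (modulo the point you yourself flag at the end) that it maps to $\mathrm{ob}(E)$ under $\alpha^2$. But $\mathrm{ob}(\rho)$ is \emph{defined} group-cohomologically, as the class of the $2$-cocycle $(g_1,g_2)\mapsto\tilde\rho(g_1g_2)\tilde\rho(g_2)^{-1}\tilde\rho(g_1)^{-1}-\mathrm{id}$ attached to a set-theoretic lifting of $\rho$, viewed in $H^2(\pi_1(X,\bar x),\mathrm{ad}(\bar\rho))\otimes\mathfrak a\hookrightarrow H^2(X_{\rm et},\mathcal End(\overline{\L}))\otimes\mathfrak a$. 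The identification $\mathrm{ob}(\rho)=\mathrm{ob}(\L)$ is precisely what the paper spends the bulk of its proof establishing: it passes to a finite Galois cover $Y\to X$ trivializing $\L$, uses the identifications $Y\times_XY\simeq Y\times\Delta$ and $Y\times_XY\times_XY\simeq Y\times\Delta\times\Delta$, and via parallel transports translates the \v{C}ech $2$-cocycle $p_{13}^*(\tilde f)^{-1}p_{23}^*(\tilde f)p_{12}^*(\tilde f)-\mathrm{id}$ into the group cocycle $\tilde\rho(gh)^{-1}\tilde\rho(h)\tilde\rho(g)-\mathrm{id}$. In your write-up this identification is simply asserted (``the cocycle $\eta^\rho$ represents $\mathrm{ob}(\rho)$''), so the statement you prove is a different, though related, one; as it stands the proposal does not close the loop between the representation-theoretic obstruction and the sheaf-theoretic one.

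Two further remarks. First, the half of the argument you labor over is actually the easy half, and your route through it is heavier than necessary: since $E\simeq\L\otimes_{\Z/p^n\Z}\mathcal O_{X_n}$, any \'etale-local lift $\widetilde\L_i$ of $\L$ gives a local lift $\widetilde\L_i\otimes_{\Z/p^{n+1}\Z}\mathcal O_{X_{n+1}}$ of $E$, and lift-of-identity isomorphisms of local systems tensor to lift-of-identity bundle isomorphisms; functoriality of the obstruction cocycles then gives $\alpha^2(\mathrm{ob}(\L))=\mathrm{ob}(E)$ immediately, with no need to choose Frobenius liftings $F_i$, lift the periodic structure $\phi_{F_i}$, or invoke the full equivalence of Theorem \ref{equivalence in HT zero case} on each chart. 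Second, the ``main technical obstacle'' you flag (that the equivalence transports lift-of-identity isomorphisms of local systems to lift-of-identity bundle isomorphisms) is genuine but resolvable --- it follows from the compatibility of the functors \eqref{eq:functor-for-HT-equivalence} with reduction modulo $p^n$ --- yet you leave it open rather than resolving it, and in the simpler tensor-product formulation it disappears entirely.
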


\begin{proof} Recall first the construction of $\mathrm{ob}(E)\in H^2(X,\mathcal End(\bar E))\otimes \mathfrak a$. In the following, for $U\subset X$ an open subset, we denote by $U_n$ (resp. by $U_{n+1}$) the open subscheme of $X_{n}$ (resp. of $X_{n+1}$) corresponding to the open subset $|U_i|\subset |X|=|X_n|=|X_{n+1}|$. Let $\mathcal U=(U_i)_{i\in I}$ be an open covering such that one can lift $E_i:=E|_{U_{i,n}}$ to a vector bundle $\tilde{E}_i$ on $U_{i,n+1}$, and that for any $i,j\in I$, the two deformations $\tilde{E}_i|_{U_{ij,n+1}}$ and $\tilde{E}_j|_{U_{ij,n+1}}$ of $E|_{U_{ij,n}}$ are isomorphic: for example, choose $\mathcal U$ such that $E_i$ is a free $\mathcal O_{U_{i,n}}$-module, and take $\tilde{E}_i$ a free $\mathcal O_{U_{i,n+1}}$-module lifting $E_i$. For each $i,j\in I$,choose arbitrarily an isomorphism of $\mathcal O_{U_{ij,n+1}}$-modules
\[
f_{ij}:\tilde{E}_i|_{U_{ij,n+1}}\stackrel{\sim}{\longrightarrow}\tilde{E}_j|_{U_{ij,n+1}}, \quad \textrm{so that }f_{ij}|_{U_{ij,n}}=\mathrm{id}_{E|_{U_{ij,n}}}.
\]
Here $U_{ij}:=U_i\cap U_j$. For $i,j,l\in I$, set $U_{ijl}=U_i\cap U_j\cap U_l$ and
\[
g_{ijl}=f_{il}^{-1}|_{U_{ijl,n+1}}\circ f_{jl}|_{U_{ijl,n+1}}\circ f_{ij}|_{U_{ijl,n+1}}: \tilde{E}_{i}|_{U_{ijl,n+1}}\longrightarrow \tilde{E}_i|_{U_{ijl,n+1}}.
\]
Then $
g_{ijl}-\mathrm{id}\in \mathfrak a\cdot \mathrm{End}(\tilde{E}_i|_{U_{ijl,n+1}})\simeq \mathrm{End}(\bar{E}|_{U_{ijl}})\otimes \mathfrak a$,
and the map $(i,j,l)\mapsto g_{ijl}-\mathrm{id}$ is a $2$-cocycle of $\mathcal U$ with values in $\mathcal{E}nd(\bar{E})\otimes \mathfrak a$. The obstruction class $\mathrm{ob}(E)$ is the image of $[g-\mathrm{id}]\in H^2(\mathcal U,\mathcal End(\bar E))\otimes \mathfrak a$ via the natural map below
\[
H^2(\mathcal U, \mathcal End(\bar E))\otimes \mathfrak a\longrightarrow H^2(X,\mathcal End(\bar E))\otimes \mathfrak a,
\]
Since we can use \textcolor{black}{the \'etale topology to compute coherent cohomology}, in the construction of $\mathrm{ob}(E)$ above, one may use equally an \'etale covering $\mathcal U$ of $X$. Similarly, as $\L$ can be locally lifted to a $\Z/p^{n+1}$-local system for the finite \'etale topology on $X$, one may define exactly in the same way an obstruction class
\[
\mathrm{ob}(\L)\in H^2(X_{\rm fet},\mathcal{E}nd(\overline{\L}))\otimes \mathfrak a\hookrightarrow H^2(X_{\rm et},\mathcal{E}nd(\overline{\L}))\otimes \mathfrak a.
\]
As $E\simeq \L\otimes_{\Z/p^n\Z}\mathcal{O}_{X_n}$, we have $\alpha^2(\mathrm{ob}(\L))=\mathrm{ob}(E)$ by functoriality. So to show our proposition, it is enough to compare $\mathrm{ob}(\L)$ and $\mathrm{ob}(\rho)$ in
\[
H^2(\pi_1(X,\bar x), \mathrm{End}(\overline{\L}_{\bar x}))\otimes \mathfrak a\simeq H^2(X_{\rm fet},\mathcal End(\overline{\L}))\otimes \mathfrak a.
\]

Let $Y/X$ be a Galois \'etale covering of Galois group $\Delta:=\mathrm{Aut}(Y/X)$, such that $\L|_Y$ is constant. Let $\bar y$ be a geometric point of $Y$ over $\bar x$. We have $\pi_1(Y,\bar y)\subset \ker(\rho)$, and $\rho$ factors through the morphism $
\pi_1(X,\bar x)\rightarrow \Delta^{\rm op}$, which
sends $g\in \pi_1(X,\bar x)$ to the unique $\sigma^{g}\in \Delta$ such that $g\cdot \bar y=\sigma^{g}(\bar y)$. Moreover, we have
\begin{equation}\label{eq:identification1}
\coprod_{\sigma\in \Delta}Y_{\sigma}=Y\times \Delta\stackrel{\sim}{\longrightarrow} Y\times_X Y, \quad (y,\sigma)\mapsto (y,\sigma(g)),
\end{equation}
with $Y_{\sigma}=Y$ for every $\sigma$. Similarly,
\begin{equation}\label{eq:identification2}
\coprod_{(\sigma,\tau)\in \Delta\times \Delta}Y_{\sigma,\tau}=Y\times \Delta\times \Delta\stackrel{\sim}{\longrightarrow} Y\times_X Y\times_X Y, \quad (y,\sigma,\tau)\mapsto (y,\sigma(y), \tau(\sigma(y))),
\end{equation}
with $Y_{\sigma,\tau}=Y$ for every $(\sigma,\tau)\in \Delta\times\Delta$.
Since $\L|_Y$ is constant, it can be lifted to a constant $\Z/p^{n+1}\Z$-local system $\widetilde{\L}$. Let
\begin{equation} \label{eq:f-tilde}
\tilde f:p_1^*\widetilde{\L}\stackrel{\sim}{\longrightarrow}p^{*}_2\widetilde{\L}
\end{equation}
be an isomorphism of $\Z/p^{n+1}\Z$-modules on $Y\times_X Y$, lifting the canonical isomorphism $p_1^{*}\L\stackrel{\sim}{\ra}p_2^*\L$. Here $p_i:Y\times_XY\ra Y$ for $i=1,2$ are the two projections. Using \eqref{eq:identification1}, $\tilde f$ is identified with a collection of isomorphisms $(\tilde f_{\sigma})_{\sigma\in\Delta}$ on $Y$, with $\tilde f_{\sigma}:\widetilde{\L}\stackrel{\sim}{\rightarrow} \sigma^*\widetilde{\L}$ an isomorphism of $\Z/p^{n+1}\Z$-modules lifting the canonical one $f_{\sigma}:\L_Y\stackrel{\sim}{\ra}\sigma^*\L_Y$ (recall $\L_Y=\L|_Y$). On the other hand, let $g\in \pi_1(X,\bar x)$ with $\sigma(\bar y)=g\bar y$. Since  $Y\ra X$ is \'etale, every finite \'etale $Y$-scheme is finite \'etale over $X$. So $g\in \pi_1(X,\bar x)$ induces an isomorphism \[
\gamma_g:F_{Y,\bar y}\stackrel{\sim}{\longrightarrow}F_{Y,\sigma(\bar y)}
\]
of fiber functors $F_{Y,\bar y}$ and $F_{Y,\sigma(\bar y)}$ of the category $Y_{\rm fet}$ of finite \'etale $Y$-schemes, i.e., a parallel transport from $\bar y$ to $\sigma(\bar y)$. On the other hand, in terms of representation, $\sigma^*\L_Y$ corresponds to the representation (which is in fact trivial)
\[
\pi_1(Y,\bar y)\stackrel{\sigma_*}{\longrightarrow}\pi_1(Y,\sigma(\bar y))\stackrel{\rho_{\bar y}}{\longrightarrow}\mathrm{Aut}(\L_{Y,u(\bar y)}).
\]
Using the parallel transport $\gamma_g$, we obtain an isomorphism of profinite groups
\[
\iota:\pi_1(Y,\sigma(\bar y))=\mathrm{Aut}(F_{Y,\sigma(\bar y)})\stackrel{\sim}{\longrightarrow}\pi_1(Y,\bar y)=\mathrm{Aut}(F_{Y,\bar y}), \quad h\mapsto \gamma_g^{-1}\circ h\circ \gamma_g,
\]
and the commutative square below
\[
\xymatrix{\pi_1(Y,\sigma(\bar y))\ar[r]^{\rho_{\sigma(\bar y)}} \ar[d]_{\iota}& \mathrm{Aut}(\L_{Y,\sigma(\bar y)})\ar[d]^{\phi\mapsto \gamma_g^{-1}\phi\gamma_g} \\ \pi_1(Y,\bar y)\ar[r]^{\rho_{\bar y}} & \mathrm{Aut}(\L_{Y,\bar y})}.
\]
As $\L_Y=\L|_Y$, $\L_{Y,\bar y}$ and $\L_{Y,\sigma(\bar y)}$ are identified with $\L_{\bar x}$, so that the parallel transport $
\gamma_g(\L_Y):\L_{Y,\bar y}\rightarrow \L_{Y,\sigma(\bar y)}$
becomes the $\Z/p^n\Z$-linear map $\rho(g): \L_{\bar x}\ra \L_{\bar x}$. Moreover, viewed as subgroups of $\pi_1(X,\bar x)$, the map $\iota\circ \sigma_*:\pi_1(Y,\bar y)\rightarrow \pi_1(Y,\bar y)$ sends $h$ to $ghg^{-1}$.
Consequently, using the parallel transport $\gamma_g$, the isomorphism $f_{\sigma}:\L_Y\ra \sigma^*\L$ is identified with the $\Z/p^n\Z$-linear map $\rho(g):\L_{\bar x}\ra \L_{\bar x}$ over the morphism $\iota\circ \sigma_*$ above. In a similar way, the isomorphism $\tilde f_{\sigma}:\widetilde{\L}\ra \sigma^*\widetilde{\L}$ can be identified with a $\Z/p^{n+1}\Z$-linear isomorphism $
\rho(g)^{\sim}: \widetilde{\L}_{\bar y}\rightarrow \widetilde{\L}_{\bar y}$
lifting $\rho(g)$. As $\widetilde{\L}$ is constant, the isomorphism $\gamma_g(\widetilde{\L}):\widetilde{\L}_{\bar y}\stackrel{\sim}{\ra}\widetilde{\L}_{\sigma(\bar y)}$ does not depend on the choice of the parallel transport $\gamma_g$ from $\bar y$ to $\sigma(\bar y)$. As a result, $\rho(g)^{\sim}$ depends only on $\sigma$ (recall $g\in \pi_1(X,\bar x)$ is such that $\sigma(\bar y)=g\cdot \bar y$).

Consider the isomorphism $\tilde{f}$ in \eqref{eq:f-tilde}, and the $2$-cocycle
\[
p_{13}^{*}(\tilde f)^{-1}\circ p_{23}^*(\tilde f)\circ p_{12}^*(\tilde f)- \mathrm{id}\in \mathfrak a\mathrm{End}(\widetilde{\L}|_{Y\times Y\times Y})\simeq \mathcal{E}nd(\overline{\L})(Y\times Y\times Y)\otimes \mathfrak a
\]
relative to the \'etale covering $\mathcal U:=\{Y\ra X\}$, with $
p_{12},p_{23},p_{13}:Y\times_X Y\times_X Y\rightarrow Y\times_XY$
the three natural projections. The obstruction class $\mathrm{ob}(\L)$ is by definition the image of this cohomology class through the canonical map
\[
H^2(\mathcal U,\mathcal End(\overline{\L}))\otimes \mathfrak a\longrightarrow H^2(X_{\rm fet},\mathcal End(\overline{\L}))\otimes \mathfrak a.
\]
Using \eqref{eq:identification2}, the restriction to $Y_{\sigma,\tau}$
of $p_{13}^{*}(\tilde f)^{-1}\circ p_{23}^*(\tilde f)\circ p_{12}^*(\tilde f)-\mathrm{id}$ is
\[
\tilde f_{\tau\sigma}^{-1}\circ \sigma^*(\tilde f_{\tau})\circ \tilde f_{\sigma}-\mathrm{id}\in \mathfrak a\mathrm{End}(\widetilde{\L}|_{Y\times Y\times Y}).
\]
Let $g,h\in \pi_1(X,\bar x)$ with $\sigma(\bar y)=g\bar y$ and $\tau(\bar y)=h\bar y$. In terms of representations, the morphism above becomes
\[
\tilde{\rho}(gh)^{-1}\circ \tilde{\rho}(h)\circ \tilde\rho(g)- \mathrm{id}\in \mathfrak a\mathrm{End}(\widetilde{\L}_{\bar y})\simeq \mathrm{End}(\overline{\L}_{\bar x})\otimes \mathfrak a.
\]
Varying $g,h$, we obtain the $2$-cocycle defining $\mathrm{ob}(\rho)$ (Remark \ref{rem:deformation-of-repns}). As a result,
\[
\mathrm{ob}(\L)=\mathrm{ob}(\rho)\in H^2(\pi_1(X,\bar x),\mathrm{End}(\overline{\L}_{\bar x}))\otimes \mathfrak a\simeq H^2(X_{\rm fet},\mathcal End(\overline{\L}))\otimes \mathfrak a.
\]
This completes the proof of our proposition.
\end{proof}

\begin{corollary}\label{obstruction space}Keep the notation above. Suppose moreover that the base field $k$ is algebraically closed and that $X/k$ is proper. Then $\mathrm{ob}(\rho)=0$ if and only if $\mathrm{ob}(E)=0$.
\end{corollary}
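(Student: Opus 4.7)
The plan is to reduce the corollary to the injectivity of
$$\alpha^2 : H^2(X_{\rm et},\mathcal End(\overline{\L}))\otimes \mathfrak a\longrightarrow H^2(X,\mathcal End(\bar E))\otimes \mathfrak a$$
in the long exact sequence displayed just before Proposition \ref{prop:obstruction-class}. By that proposition one has $\alpha^2(\mathrm{ob}(\rho))=\mathrm{ob}(E)$, so the direction $\mathrm{ob}(\rho)=0\Rightarrow \mathrm{ob}(E)=0$ is immediate. For the converse it suffices, by exactness of the same sequence, to show that the preceding map
$$1-\Phi : H^1(X,\mathcal End(\bar E))\otimes \mathfrak a\longrightarrow H^1(X,\mathcal End(\bar E))\otimes \mathfrak a$$
is surjective, for then $\beta^1=0$ and $\alpha^2$ is injective.

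Since $X/k$ is proper and $\mathcal End(\bar E)$ is a coherent $\mathcal O_X$-module, $V:=H^1(X,\mathcal End(\bar E))$ is a finite-dimensional $k$-vector space, and since $\mathfrak a$ is one-dimensional over $\mathbb F_p$ it is enough to check that $1-\Phi$ is surjective on $V$. A short verification using the definition $\Phi(f)=\bar\phi\circ F^*(f)\circ\bar\phi^{-1}$ shows that $\Phi$ is a Frobenius-semilinear endomorphism of the coherent sheaf $\mathcal End(\bar E)$, so that the induced endomorphism of $V$ (still denoted $\Phi$) is $\sigma$-semilinear, where $\sigma$ is the Frobenius of $k$. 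The proof therefore reduces to the following classical fact, which is the main step: over an algebraically closed field $k$ of characteristic $p$, any $\sigma$-semilinear endomorphism $\Phi$ of a finite-dimensional $k$-vector space $V$ has $1-\Phi$ surjective.

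To establish this fact I would decompose $V=V_{\rm bij}\oplus V_{\rm nil}$, where $V_{\rm bij}=\bigcap_{n\geq 0}\Phi^n(V)$ is the largest $\Phi$-stable subspace on which $\Phi$ acts bijectively, and $V_{\rm nil}$ is a $\Phi$-stable complement on which $\Phi$ is nilpotent. On $V_{\rm nil}$ the operator $1-\Phi$ is invertible via the finite geometric series $\sum_{n\geq 0}\Phi^n$. On $V_{\rm bij}$ the morphism $v\mapsto v-\Phi(v)$ of affine $k$-schemes has identity differential at every point (the $p$-linearity of $\Phi$ makes its differential vanish), hence is \'etale between affine spaces of the same dimension, and is therefore surjective on $k$-points---this is essentially Lang's theorem applied to $\mathbb G_a^{\dim V_{\rm bij}}$ with the Frobenius-type endomorphism $\Phi$. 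Combining the two cases gives the surjectivity of $1-\Phi$ on $V$, and the corollary follows. The properness of $X/k$ enters only through the finite-dimensionality of $V$, while the algebraic closedness of $k$ enters only through Lang's theorem; the main (though mild) obstacle is tracking the Frobenius-twists carefully enough to confirm that $\Phi$ on $V$ is genuinely $\sigma$-semilinear in the sense required.
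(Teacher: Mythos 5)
Your reduction is exactly the paper's: the forward direction is immediate from $\alpha^2(\mathrm{ob}(\rho))=\mathrm{ob}(E)$ (Proposition \ref{prop:obstruction-class}), and the converse follows from the surjectivity of $1-\Phi$ on $H^1(X,\mathcal{E}nd(\bar E))$, which by exactness forces $\beta^1=0$ and $\alpha^2$ injective. The only difference is that where the paper cites \cite[Expos\'e XXII, Proposition 1.2]{SGA7} for this surjectivity of $1-\Phi$ for a Frobenius-semilinear endomorphism of a finite-dimensional vector space over an algebraically closed field, you sketch its standard proof (Fitting decomposition, geometric series on the nilpotent part, Lang--Steinberg on $\mathbb G_a^{\dim V_{\rm bij}}$ --- note it is the Lang-type argument, not ``\'etale between affine spaces'' alone, that yields surjectivity), so this is essentially the same proof.
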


\begin{proof} Clearly if $\mathrm{ob}(\rho)=0$ then $\mathrm{ob}(E)=0$. Conversely, suppose $\mathrm{ob}(E)=0$. By Proposition \ref{prop:obstruction-class}, to conclude that $\mathrm{ob}(\rho)=0$, it is enough to check that the map
\[
1-\Phi: H^1(X,\mathcal End(\bar E))\longrightarrow H^1(X,\mathcal End(\bar E))
\]
is surjective. But this is well-known because $\Phi$ is Frobenius-semilinear and the base field $k$ is algebraically closed (\cite[Expos\'e XXII, Proposition 1.2]{SGA7}).
\end{proof}

\begin{remark} Keep the notation above. We have $\mathrm{ob}(\rho)=0\in H^2(X_{\rm et},\mathcal End(\overline{\L}))$
for all representations $\rho:\pi_1(X,\bar x)\ra \mathrm{GL}_n(\Z/p^n\Z)$ in the following two cases:
\begin{enumerate}
\item $X$ is affine connected and smooth over a field of characteristic $p$.
\item $X$ is a proper connected smooth curve over an algebraically closed field of characteristic $p$. 
\end{enumerate}
\textcolor{black}{In fact, one has a slightly more general result as follows. Let $(R,\mathfrak m)\ra (R_0,\mathfrak m_0)$ be a surjective morphism of local Artinian rings as in Remark \ref{rem:deformation-of-repns}, such that the ideal $\mathfrak a=\ker(R\ra R_0)$ of $R$ is annihilated by $\mathfrak m$. Assume that their common residue field $\kappa$ is a \emph{finite field of characteristic $p$}.  Then, for $X$ a scheme satisfying one of the two conditions above, and for any continuous representation $\gamma:\pi_1(X,\bar x)\ra \mathbf{GL}_d(R_0)$, the obstruction 
\[
\mathrm{ob}(\gamma)\in H^2(\pi_1(X,\bar x), \mathrm{ad}(\bar{\gamma}))\otimes_{\kappa} \mathfrak a
\]  
for the existence of a deformation of $\gamma$ over $R$ \emph{vanishes}. Here $\bar{\gamma}$ denotes as usual the reduction modulo $\mathfrak m_0$ of $\gamma$.} Indeed, in both cases, it suffices to show that, for $M$ a finite-dimensional continuous $\mathbb F_p$-representation of $\pi_1(X,\bar x)$, 
\[
H^2(\pi_1(X,\bar x),M)=0.
\]
Write $M$ as the stalk at the geometric base point $\bar x$ of an $\mathbb F_p$-local system $\mathbb M$. Since the natural map below is always injective  
\[
H^2(\pi_1(X, \bar x),M)\longrightarrow H^2(X_{\rm et}, \M),
\]
we reduce our assertion to the well-known result that $H^2(X_{\rm et},\M)=0$ for any $\F_p$-local system $\M$. To see this, write $\M=\mathcal M^{\phi=1}$, with $(\mathcal M,\phi)$ a Frobenius-periodic vector bundle of period $1$ on $X$, giving a short exact sequence of Artin-Schreier-type in the \'etale topology
\[
0\longrightarrow \M\longrightarrow \mathcal M\stackrel{1-\phi}{\longrightarrow} \mathcal M\longrightarrow 0.
\]
Therefore, the assertion in the first case follows since $H^i(X,\mathcal M)=0$ for any $i\geq 1$. In the second case, we have still $H^i(X,\mathcal M)=0$ whenever $i\geq 2$ for the reason of dimension. Furthermore, since $k$ is algebraically closed, the map
\[
1-\phi: H^1(X,\mathcal M)\longrightarrow H^1(X,\mathcal M)
\]
is surjective (\cite[Expos\'e XXII, Proposition 1.2]{SGA7}). As a result, we get equally $H^2(X_{\rm et},\M)=0$ in the second case. \textcolor{black}{Consequently, for $X$ a proper connected smooth curve over an algebraically closed field of characteristic $p$ as in (2), and for an absolute irreducible representation $\gamma_0:\pi_1(X,\bar x)\ra \mathbf{GL}_n(\kappa)$, its deformation functor is pro-representable by a ring of formal power series. See also \cite[\S~3]{dJ} (especially \cite[Lemma 3.11]{dJ}) for the $\ell$-adic analog of this statement.\footnote{\textcolor{black}{We would like to thank one of the referees for drawing our attention to \cite{dJ}.}}}
\end{remark}

Next suppose $\alpha(\mathrm{ob}(\L))=\mathrm{ob}(E)=0$, so that $E$ can be deformed over $X_{n+1}$. Let $E_{n+1}$ be such a deformation: it is a vector bundle on $X_{n+1}$ equipped with an identification $\iota: E_{n+1}|_{X_n}\stackrel{\sim}{\rightarrow}E$. As $E$ is the initial term of a HDRF, we can apply to it the inverse Cartier transform $C_{n+1}^{-1}$, and get a vector bundle $C_{n+1}^{-1}(E_{n+1})$ on $X_{n+1}$, endowed with the composed isomorphism below
\[
C_{n+1}^{-1}(E_{n+1})|_{X_n}\simeq C_n^{-1}(E_{n+1}|_{X_n})\stackrel{F^*(\iota)}{\longrightarrow}C_n^{-1}(E)\stackrel{\phi}{\longrightarrow}E.
\]
In particular, the pair $(C_{n+1}^{-1}(E_{n+1}),\phi\circ F^*(\iota))$ gives a second deformation of $E$ over $X_{n+1}$. Thus there exists a unique $v\in H^1(X,\mathcal End(E))\otimes \mathfrak a$ with
\[
[(C_{n+1}^{-1}(E_{n+1}),\phi\circ F^*(\iota))]=[(E_{n+1},\iota)]+v \in \mathrm{Def}(E/X_{n+1}).
\]
Let $(E_{n+1}',\iota')$ be another deformation of $E$ over $X_{n+1}$, and $w\in H^1(X,\mathcal End(\bar E))\otimes \mathfrak a$ such that
\[
[E_{n+1}',\iota']=[E_{n+1},\iota]+w\in \mathrm{Def}(E/X_{n+1})
\]
Let $v'\in H^1(X,\mathcal End(\bar E))\otimes \mathfrak a$ be the element defined by the equality
\[
[(C_{n+1}^{-1}(E_{n+1}'),\phi\circ F^*(\iota))]=[(E_{n+1}',\iota')]+v' \in \mathrm{Def}(E/X_{n+1}).
\]
\begin{proposition} \label{prop:obstruction-class-when-liftable} Keep the notation and assumption above. In particular, $\mathrm{ob}(E)=0$, i.e., there is no obstruction to deform $E$. Then we have
\begin{enumerate}
\item $\beta^1(v)=\mathrm{ob}(\rho)$; and
\item $v'=v+\Phi(w)-w\in H^1(X,\mathcal End(\bar E))\otimes \mathfrak a$.
\end{enumerate}
\end{proposition}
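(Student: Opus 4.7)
The plan is to prove both parts by a Čech cohomology computation on a common étale cover $\{V_i\to X\}$ chosen so that $\L|_{V_i}$ is trivial, so that the corresponding local HDRF lift $(E_i',\phi_i')$ of $(E,\phi)|_{V_{i,n}}$ provided by Theorem \ref{equivalence in HT zero case} satisfies $E_i'\simeq E_{n+1}|_{V_i}$ as bundle deformations of $E|_{V_{i,n}}$, and so that HDRF automorphisms of $E_{n+1}|_{V_{ij}}$ lifting the identity on $(E,\phi)|_{V_{ij,n}}$ exist; such refinements are possible because any two bundle deformations are Zariski-locally isomorphic and because the sheaf $\mathcal End(\bar E)$ is étale-surjective under $1-\Phi$. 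Both statements will then follow by producing explicit Čech representatives of $v$ and $\mathrm{ob}(\rho)$ and linking them through the connecting map $\beta^1$ of the Artin--Schreier-type sequence $0\to \mathcal End(\overline{\L})\to \mathcal End(\bar E)\xrightarrow{1-\Phi}\mathcal End(\bar E)\to 0$.

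I would dispatch (2) first, as a torsor calculation in $\mathrm{Def}(E/X_{n+1})$. By Lemma \ref{flat basis} and Remark \ref{rem:remove-extra-assumptions}, $C_{n+1}^{-1}$ is an auto-equivalence of bundle deformations whose underlying bundle functor reduces modulo $p$ to the Frobenius pullback $F^*$. Twisting $E_{n+1}$ by $w\in H^1(X,\mathcal End(\bar E))\otimes\mathfrak a$ to form $E_{n+1}'=E_{n+1}+w$ therefore twists $C_{n+1}^{-1}(E_{n+1})$ by $F^*w$, which via $\bar\phi$ is precisely $\Phi(w)$ as a class on the $E$-side. Subtracting $[E_{n+1}']=[E_{n+1}]+w$ from $[C_{n+1}^{-1}(E_{n+1}')]=[C_{n+1}^{-1}(E_{n+1})]+\Phi(w)$ in the torsor gives $v'=v+\Phi(w)-w$.

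For (1), on the chosen cover, fixing identifications $E_i'=E_{n+1}|_{V_i}$ turns $\phi_i'$ into a local splitting $C_{n+1}^{-1}(E_{n+1})|_{V_i}\xrightarrow{\sim}E_{n+1}|_{V_i}$ of $\phi$, and the differences $(\phi_j')^{-1}\circ\phi_i'=1+p^n g_{ij}$ produce, after conjugating by $\bar\phi$, a $1$-cocycle $(\tilde g_{ij})$ representing $v$. Lifting the identity $\L|_{V_{ij}}=\L|_{V_{ij}}$ to isomorphisms $\widetilde{\L}_i|_{V_{ij}}\simeq \widetilde{\L}_j|_{V_{ij}}$ and translating via Theorem \ref{equivalence in HT zero case}, one obtains HDRF automorphisms $h_{ij}=1+p^n\alpha_{ij}$ of $E_{n+1}|_{V_{ij}}$ satisfying the compatibility $\phi_j'\circ C_{n+1}^{-1}(h_{ij})=h_{ij}\circ\phi_i'$. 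Expanding this to first order in $p^n$ and using that $C_{n+1}^{-1}$ acts on an automorphism $1+p^n\alpha$ as $1+p^n\Phi(\alpha)$ on the $E$-side yields the key identity
\[
(1-\Phi)(\alpha_{ij})=\tilde g_{ij}.
\]
Hence the $\alpha_{ij}$ are precisely the local lifts of the $v$-cocycle under $1-\Phi$ used in the construction of the connecting map, so $\beta^1(v)$ is represented by $\delta\alpha_{ijk}=\alpha_{ij}+\alpha_{jk}-\alpha_{ik}$. On the other hand, by Proposition \ref{prop:obstruction-class}, $\mathrm{ob}(\rho)=\mathrm{ob}(\L)$ is represented by $h_{ik}^{-1}h_{jk}h_{ij}-1=p^n\delta\alpha_{ijk}$, which is automatically $\Phi$-invariant (since $(1-\Phi)\delta\alpha=\delta(1-\Phi)\alpha=\delta\tilde g=0$) and so lies in $\mathcal End(\overline{\L})$. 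The two representatives coincide, yielding $\mathrm{ob}(\rho)=\beta^1(v)$.

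The main obstacle I anticipate is the careful bookkeeping of the various identifications at play: between the bundle deformation $E_{n+1}|_{V_i}$ and the locally constructed HDRF-bundle $E_i'$, between $C_{n+1}^{-1}(E_{n+1})$ and $F^*E_{n+1}$ in the absence of a global Frobenius lift, and between Frobenius-pullback data on $F^*\bar E$ and its $\bar\phi$-twist producing the $\Phi$-semilinear data on $\mathcal End(\bar E)$. Once these identifications are pinned down and the cover has been refined enough to support all of them simultaneously, both $\beta^1(v)$ and $\mathrm{ob}(\rho)$ are represented by the same 2-cocycle $\delta\alpha_{ijk}$, and (1) follows; combined with the torsor computation above, this completes the proof of the proposition.
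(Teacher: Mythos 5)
Your argument is correct and follows essentially the same route as the paper: part (2) is the paper's computation with the local splittings $\phi_i$ and comparison isomorphisms $g_i$, recast at the torsor level via the fact that $C_{n+1}^{-1}$ twists deformation classes by $\Phi$, and part (1) is the paper's identification of $\beta^1(v)$ with $\mathrm{ob}(\L)=\mathrm{ob}(\rho)$ run in the opposite direction (you start from isomorphisms of the local liftings $\widetilde{\L}_i$ and extract the relation $(1-\Phi)(\alpha_{ij})=\tilde g_{ij}$, whereas the paper first solves this Artin--Schreier-type equation for the $v$-cocycle and then produces the $\tilde f_{ij}$), with the same \'etale-refinement/hypercovering caveat the paper also records. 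Up to sign and ordering conventions in the \v{C}ech representatives (your $(\phi_j')^{-1}\circ\phi_i'$ versus the paper's $\phi_j\circ\phi_i^{-1}$), the computations coincide.
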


\begin{proof} (1) Recall briefly the construction of $v\in H^1(X,\mathcal End(\bar E))\otimes \mathfrak a$. As any two deformations of $E$ over $X_{n+1}$ are Zariski-locally isomorphic, one can find an open covering $\mathcal U=(U_i)$ of $X$, and an isomorphism $\phi_i: C_{n+1}^{-1}(E_{n+1})|_{U_i}=C_{n+1}^{-1}(E_{n+1}|_{U_i})\stackrel{\sim}{\rightarrow}E_{n+1}|_{U_i}$ for every $i$,
inducing $\phi|_{U_i}$ modulo $p^n$. Then $v$ is the class of the $1$-cocycle below
\[
(i,j)\mapsto \varphi_{ij}:=\phi_j|_{U_{ij}}\circ \phi_i^{-1}|_{U_{ij}}-\mathrm{id}\in \mathfrak a\mathrm{End}(E_{n+1}|_{U_{ij}})\simeq \mathrm{End}(\bar E|_{U_{ij}})\otimes \mathfrak a.
\]
In the following we consider a special case: for the general case one needs a further \'etale covering $U_{ij}^w$ of $U_{ij}$ for each $i,j$ (so we need the notion of hypercoverings of $X$). Suppose that there exist $f_{ij}\in \mathrm{End}(\bar E|_U{_{ij}})\otimes \mathfrak a$ for all $i,j$, with $\varphi_{ij}=f_{ij}-\Phi(f_{ij})=f_{ij}-\bar{\phi}\circ F^*(f_{ij})\circ \bar{\phi}^{-1}$,
or equivalently,
\[
\phi_j|_{U_{ij}}\circ \phi_i^{-1}|_{U_{ij}}  =  (\mathrm{id}+\bar{\phi}\circ F^*(f_{ij})\circ \bar{\phi}^{-1})^{-1}\circ (\mathrm{id}+f_{ij})\in \mathrm{id}+\mathfrak a\mathrm{End}(E_{n+1}|_{U_{ij}}).
\]
Therefore $
C_{n+1}^{-1}(\mathrm{id}+f_{ij})\circ (\phi_j|_{U_{ij}})=(\mathrm{id}+f_{ij})\circ (\phi_i|_{U_{ij}})$. As a result, the morphism $
\mathrm{id}+f_{ij}\in \mathrm{id}+\mathrm{End}(\bar E|_{U_{ij}})\otimes \mathfrak a\simeq \mathrm{id}+\mathfrak a\mathrm{End}(E_{n+1}|_{U_{ij}})$
gives an isomorphism of $1$-periodic HDRFs
\[
\mathrm{id}+ f_{ij}: (E_{n+1}|_{U_{ij}},\Fil_{tr},\phi_i)\stackrel{\sim}{\longrightarrow} (E_{n+1}|_{U_{ij}},\Fil_{tr},\phi_j).
\]
Let $\widetilde{\L}_i=(E_{n+1}|_{U_i})^{\phi_i=1}$ which is a $(\Z/p^{n+1}\Z)$-local system lifting $\L=E^{\phi=1}$ on $U_i$. So we deduce an isomorphism $\tilde f_{ij}: \widetilde{\L}_i|_{U_{ij}}\stackrel{\sim}{\rightarrow} \widetilde{\L}_j|_{U_{ij}}$ of $\Z/p^{n+1}\Z$-local systems.
By definition $\beta^1(v)$ is the image in $H^2(X_{\rm et},\mathcal End(\overline{\L}))\otimes \mathfrak a$ of the $2$-cocycle
\[
(i,j,l)\mapsto -f_{il}+f_{ij}+f_{jl}=\tilde{f}_{il}^{-1}\circ \tilde{f}_{jl}\circ \tilde{f}_{ij}-\mathrm{id} \in \mathfrak a\mathrm{End}(\widetilde{\L}_i|_{U_{ijl}})\simeq \mathrm{End}(\overline{\L}|_{U_{ijl}})\otimes \mathfrak a,
\]
which is nothing but the class $\mathrm{ob}(\L)=\mathrm{ob}(\rho)$.

(2) Let $\mathcal U=(U_i)_{i\in I}$ be an open covering of $X$ such that for each $i$ there exists an isomorphism $
g_i: E_{n+1}'|_{U_i}\stackrel{\sim}{\rightarrow}E_{n+1}|_{U_i}$
of vector bundles reducing identity on $E|_{U_i}$. Then $w$ is the image of the following $1$-cocycle
\[
(i,j)\mapsto g_j|_{U_{ij}}\circ g_i^{-1}|_{U_{ij}}-\mathrm{id}\in \mathfrak a\mathrm{End}(E_{n+1}|_{U_{ij}})\simeq \mathrm{End}(E_{n+1}|_{U_{ij}})\otimes \mathfrak a.
\]
Shrinking $\mathcal U$ if necessary, we suppose as in (1) that there exists for each $i$ an isomorphism
\[
\phi_i: C_{n+1}^{-1}(E_{n+1})|_{U_i}=C_{n+1}^{-1}(E_{n+1}|_{U_i})\stackrel{\sim}{\longrightarrow}E_{n+1}|_{U_i}
\]
inducing $\phi|_{U_i}$ modulo $\mathfrak a$. Then one deduces an isomorphism
\[
\phi_i'=g_i^{-1}\circ \phi_i\circ C_{n+1}^{-1}(g_i):C_{n+1}^{-1}(E_{n+1}'|_{U_i})\stackrel{\sim}{\longrightarrow}E_{n+1}'|_{U_i}.
\]
which induces equally $\phi|_{U_i}$ modulo $\mathfrak a$. So
\[
\phi_j'|_{U_{ij}}\circ {\phi_i'}^{-1}|_{U_{ij}}=g_j^{-1}\circ \phi_j\circ C_{n+1}^{-1}(g_jg_i^{-1})\circ \phi_i^{-1}\circ g_i
\]
yielding
\[
g_j\circ (\phi_j'\circ {\phi_i'}^{-1})\circ g_j^{-1}=\phi_j\circ C_{n+1}^{-1}(g_jg_i^{-1})\circ \phi_j^{-1}\circ (\phi_j\circ \phi_i^{-1})\circ (g_i\circ g_j^{-1}).
\]
Varying $i,j$ we find $v'=v+\Phi(w)-w\in H^1(X,\mathcal End(\bar E))\otimes \mathfrak a$, as wanted.
\end{proof}

Finally, suppose $\mathrm{ob}(\rho)=0$ and let $
\tilde{\rho}, \tilde{\rho}':\pi_1(X,\bar x)\rightarrow \mathbf{GL}_d(\Z/p^{n+1}\Z)$
be two deformations of $\rho$ over $\Z/p^{n+1}\Z$. By Remark \ref{rem:deformation-of-repns}, there exists a unique
\[
w_{\rm Gal}\in H^1(\pi_1(X,\bar x),\mathrm{End}(\bar{\rho}))\otimes \mathfrak a\hookrightarrow H^1(X_{\rm et},\mathcal End(\overline{\L}))\otimes \mathfrak a
\]
with $[\tilde{\rho}']=[\tilde{\rho}]+w_{\rm Gal}$. 
Let $(E_{n+1},\Fil_{tr},\phi_{n+1})$ and $(E_{n+1}',\Fil_{tr},\phi_{n+1}')$ be the $1$-periodic HDRF corresponding to $\tilde{\rho}$ and $\tilde{\rho}'$ respectively. Thus $E_{n+1}$ and $E_{n+1}'$ are two deformations of $E$ over $X_{n+1}$. Let
\[
w_{\rm coh}\in H^1(X,\mathcal End(\bar{E}))\otimes \mathfrak a
\]
be the unique element such that $[E_{n+1}']=[E_{n+1}]+w_{\rm coh}\in \mathrm{Def}(E/X_{n+1})$.

\begin{proposition}\label{prop:def-spaces-and-auto} Keep the notations above. In particular, $\tilde{\rho}$ and $\tilde{\rho}'$ are two deformations of $\rho$ over $\Z/p^{n+1}\Z$, with corresponding level-zero HDRFs $(E_{n+1},\Fil_{tr},\phi_{n+1})$ and $(E_{n+1}',\Fil_{tr},\phi_{n+1}')$ respectively. Then
\begin{enumerate}
\item we have $\alpha^1(w_{\rm Gal})=w_{\rm coh}$;
\item if moreover $w_{\rm coh}=0$, let $f:E_{n+1}'\stackrel{\sim}{\ra}E_{n+1}$ be an isomorphism of vector bundles reducing to $\mathrm{id}_{E}$ modulo $\mathfrak a$, and set
\[
g= f\circ \phi_{n+1}'\circ C_{n+1}^{-1}(f^{-1})\circ \phi_{n+1}^{-1}-\mathrm{id} \in \mathfrak a\mathrm{End}(E_{n+1})\simeq \mathrm{End}(\bar E)\otimes \mathfrak a,
\]
then $w_{\mathrm{Gal}}=\beta^0(g)$; and
\item the map $\alpha^0$ identifies the automorphism group of the deformations $\tilde{\rho}$ with $H^0(X,\mathcal End(\bar E))^{\Phi=1}\otimes \mathfrak a$, that is, the group of automorphisms of $E_{n+1}\in \mathrm{Def}(E/X_{n+1})$ that commute with $\phi_{n+1}$.
\end{enumerate}
\end{proposition}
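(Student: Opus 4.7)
The strategy is to translate deformation data for $\tilde{\rho}$ and $\tilde{\rho}'$ into \'etale-local cohomological data on $X$ via the equivalence of categories established in Theorem~\ref{equivalence in HT zero case}, and then read off the relevant images in the long exact sequence attached to the short exact sequence $0 \to \mathcal End(\overline{\L}) \to \mathcal End(\bar E) \xrightarrow{1-\Phi} \mathcal End(\bar E) \to 0$ recalled before the proposition. Let $\widetilde{\L}, \widetilde{\L}'$ denote the $\Z/p^{n+1}\Z$-local systems corresponding to $\tilde{\rho}, \tilde{\rho}'$, and fix an \'etale covering $\mathcal V = (V_i)_{i \in I}$ of $X$ on which both $\widetilde{\L}$ and $\widetilde{\L}'$ become constant.

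For (1), I would choose for each $i$ an isomorphism $\tilde f_i : \widetilde{\L}'|_{V_i} \xrightarrow{\sim} \widetilde{\L}|_{V_i}$ reducing to the identity modulo $\mathfrak a$, so that the $1$-cocycle $(i,j) \mapsto \tilde f_j \circ \tilde f_i^{-1} - \mathrm{id} \in \mathcal End(\overline{\L})(V_{ij}) \otimes \mathfrak a$ relative to $\mathcal V$ represents $w_{\mathrm{Gal}}$. Under the equivalence of Theorem~\ref{equivalence in HT zero case}, each $\tilde f_i$ extends to an isomorphism of $1$-periodic HDRFs $(E_{n+1}'|_{V_i}, \Fil_{tr}, \phi_{n+1}') \xrightarrow{\sim} (E_{n+1}|_{V_i}, \Fil_{tr}, \phi_{n+1})$, and in particular an isomorphism of the underlying vector bundles. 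Viewed in $\mathcal End(\bar E)(V_{ij}) \otimes \mathfrak a$, the same cocycle then computes the obstruction to gluing the $\tilde f_i$ into a global isomorphism $E_{n+1}' \xrightarrow{\sim} E_{n+1}$ of vector bundles, i.e., represents $w_{\mathrm{coh}}$. Since \'etale and Zariski cohomology agree for the coherent sheaf $\mathcal End(\bar E)$, this yields $\alpha^1(w_{\mathrm{Gal}}) = w_{\mathrm{coh}}$.

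For (2), assume $w_{\mathrm{coh}} = 0$ and let $f : E_{n+1}' \xrightarrow{\sim} E_{n+1}$ be a global isomorphism reducing to $\mathrm{id}_E$. With $\tilde f_i$ as above, set $h_i := f|_{V_i} \circ \tilde f_i^{-1} \in \mathrm{Aut}(E_{n+1}|_{V_i})$ and write $h_i = \mathrm{id} + u_i$ with $u_i \in \mathcal End(\bar E)(V_i) \otimes \mathfrak a$. Since $\tilde f_i$ intertwines $\phi_{n+1}'$ with $\phi_{n+1}$, the definition of $g$ rewrites on $V_i$ as
\[
g|_{V_i} = h_i \circ \phi_{n+1} \circ C_{n+1}^{-1}(h_i^{-1}) \circ \phi_{n+1}^{-1} - \mathrm{id}.
\]
As $u_i$ lies in $\mathfrak a \, \mathrm{End}(E_{n+1}|_{V_i})$ and $C_{n+1}^{-1}$ reduces to Frobenius pullback modulo $\mathfrak a$, a first-order expansion yields the key local identity $g|_{V_i} = u_i - \Phi(u_i) = (1-\Phi)(u_i)$. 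Therefore $(u_i)_i$ are the \'etale-local lifts of $g$ used to compute the connecting map, and $\beta^0(g)$ is represented by the $1$-cocycle $u_j - u_i \in \mathcal End(\overline{\L})(V_{ij}) \otimes \mathfrak a$. Expanding $h_i = f \tilde f_i^{-1}$ and $h_j = f \tilde f_j^{-1}$ to first order in $\mathfrak a$ identifies $u_j - u_i$ with the $1$-cocycle of the previous paragraph representing $w_{\mathrm{Gal}}$ (up to the sign conventions in Remark~\ref{rem:deformation-of-repns}(2)), whence $\beta^0(g) = w_{\mathrm{Gal}}$. The hardest step is the local identity $g|_{V_i} = (1-\Phi)(u_i)$, which crucially uses both the HDRF-compatibility of $\tilde f_i$ and the reduction of $C_{n+1}^{-1}$ modulo $\mathfrak a$ to ordinary Frobenius pullback.

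For (3), the group of automorphisms of $\tilde{\rho}$ reducing to the identity on $\rho$ consists of $\mathrm{id} + s$ with $s \in \mathrm{End}_{\pi_1(X,\bar x)}(\overline{\L}_{\bar x}) \otimes \mathfrak a$, which by Remark~\ref{rem:deformation-of-repns}(3) is naturally $H^0(\pi_1(X,\bar x), \mathrm{ad}(\bar{\rho})) \otimes \mathfrak a \simeq H^0(X_{\rm et}, \mathcal End(\overline{\L})) \otimes \mathfrak a$. On the HDRF side, an automorphism of $E_{n+1}$ reducing to $\mathrm{id}_E$ has the form $\mathrm{id} + t$ with $t \in H^0(X, \mathcal End(\bar E)) \otimes \mathfrak a$; the same first-order calculation as in (2) shows that commutativity with $\phi_{n+1}$ is equivalent to $\Phi(t) = t$. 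Exactness of the long exact sequence then identifies $H^0(X_{\rm et}, \mathcal End(\overline{\L}))$ via $\alpha^0$ with $\ker(1 - \Phi \mid H^0(X, \mathcal End(\bar E))) = H^0(X, \mathcal End(\bar E))^{\Phi = 1}$, which gives the asserted identification compatibly with the equivalence of categories.
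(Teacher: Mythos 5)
Your proposal is correct and is essentially the argument the paper omits: its ``proof'' of Proposition~\ref{prop:def-spaces-and-auto} merely says it is similar to those of Propositions~\ref{prop:obstruction-class} and~\ref{prop:obstruction-class-when-liftable}, and your \'etale-local cocycle comparison via the equivalence of Theorem~\ref{equivalence in HT zero case}, with the key first-order identity $g|_{V_i}=(1-\Phi)(u_i)$ feeding into the long exact sequence, is exactly the intended (omitted) argument. The only loose end is the sign/orientation convention of Remark~\ref{rem:deformation-of-repns}(2), which the paper itself leaves ambiguous and which, as you note, affects only bookkeeping and not the substance.
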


\begin{proof} The proof is similar to the ones above, so we omit the details here.
\end{proof}

\begin{corollary}\label{deformation space}Keep the notations of Proposition \ref{prop:def-spaces-and-auto}. Suppose moreover that the base field $k$ is algebraically closed and that $X/k$ is proper and smooth. Then the two representations $\tilde{\rho}$ and $\tilde{\rho}'$ are isomorphic as deformations of $\rho$, or equivalently, the two HDRFs $(E_{n+1},\Fil_{tr},\phi_{n+1})$ and $(E_{n+1},\Fil_{tr},\phi_{n+1}')$ are isomorphic as deformations of $(E,\Fil_{tr},\phi)$, if and only if $E_{n+1}$ and $E_{n+1}'$ have the same isomorphism class in $\mathrm{Def}(E/X_{n+1})$. In particular the natural map
\[
\left\{\begin{array}{c}\textrm{isomorphism classes}\\ \textrm{of deformations of } \\ (E,\Fil_{tr},\phi) \textrm{ over }X_{n+1}\end{array}\right\} \longrightarrow \mathrm{Def}(E/X_{n+1}), \quad (E_{n+1},\Fil_{tr},\phi_{n+1})\mapsto E_{n+1}
\]
is injective and identifies the first set, or equivalently the set of isomorphism classes of deformations of $\rho$ over $\Z/p^{n+1}\Z$, with a subset of $\mathrm{Def}(E/X_{n+1})$, and the latter is a torsor under $H^1(X,\mathcal{E}nd(E))^{\Phi=1}\otimes \mathfrak a$. Moreover this identification is compatible with the natural isomorphism below
\[
H^1(X,\mathcal{E}nd(\overline{\L}))\otimes \mathfrak a \stackrel{\sim}{\longrightarrow} H^1(X,\mathcal{E}nd(\bar{E}))^{\Phi=1}\otimes \mathfrak a.
\]
\end{corollary}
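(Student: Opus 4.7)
The plan is to deduce the corollary directly from Proposition \ref{prop:def-spaces-and-auto}, combined with the long exact \'etale cohomology sequence preceding Proposition \ref{prop:obstruction-class} and Lang's theorem (\cite[Expos\'e XXII, Proposition 1.2]{SGA7}) on the surjectivity of $1-\Phi$ for Frobenius-semilinear endomorphisms of finite-dimensional vector spaces over the algebraically closed field $k$.

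The forward direction is formal: if $\tilde{\rho}$ and $\tilde{\rho}'$ are isomorphic as deformations of $\rho$, then $w_{\rm Gal}=0$, so Proposition \ref{prop:def-spaces-and-auto}(1) gives $w_{\rm coh}=\alpha^1(w_{\rm Gal})=0$, meaning $E_{n+1}$ and $E_{n+1}'$ represent the same class in $\mathrm{Def}(E/X_{n+1})$. Conversely, assume $w_{\rm coh}=0$. Proposition \ref{prop:def-spaces-and-auto}(2) then writes $w_{\rm Gal}=\beta^0(g)$ for a suitable element $g\in\mathrm{End}(\bar E)\otimes\mathfrak a=H^0(X,\mathcal End(\bar E))\otimes\mathfrak a$. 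It therefore suffices to show that $\beta^0=0$, which by exactness of the long sequence is equivalent to the surjectivity of
\[
1-\Phi\colon H^0(X,\mathcal End(\bar E))\otimes\mathfrak a\longrightarrow H^0(X,\mathcal End(\bar E))\otimes\mathfrak a.
\]
Since $X/k$ is proper and $\mathcal End(\bar E)$ is coherent, $H^0(X,\mathcal End(\bar E))$ is a finite-dimensional $k$-vector space; since $k$ is algebraically closed and $\Phi$ is Frobenius-semilinear, Lang's theorem yields the required surjectivity. Hence $w_{\rm Gal}=0$ and $\tilde{\rho}\cong\tilde{\rho}'$.

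For the torsor statement, Theorem \ref{equivalence in HT zero case} together with Remark \ref{rem:deformation-of-repns}(2) shows that the set of isomorphism classes of deformations of $(E,\Fil_{tr},\phi)$ is a torsor under $H^1(\pi_1(X,\bar x),\mathrm{ad}(\bar\rho))\otimes\mathfrak a$, which coincides with $H^1(X_{\rm et},\mathcal End(\overline{\L}))\otimes\mathfrak a$ because $\mathcal End(\overline{\L})$ is a locally constant sheaf of finite abelian groups (so that its first \'etale cohomology classifies torsors representable by finite \'etale covers). By Proposition \ref{prop:def-spaces-and-auto}(1), the forgetful map to $\mathrm{Def}(E/X_{n+1})$ is equivariant for $\alpha^1$. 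Injectivity has just been established, and by exactness of the long sequence the image of $\alpha^1$ equals $\ker\bigl(1-\Phi\colon H^1\to H^1\bigr)=H^1(X,\mathcal End(\bar E))^{\Phi=1}$, with $\alpha^1$ itself injective (again because $\beta^0=0$). Consequently, the image of our map is a torsor under $H^1(X,\mathcal End(\bar E))^{\Phi=1}\otimes\mathfrak a$, compatibly with the natural isomorphism stated at the end of the corollary.

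I do not foresee any real obstacle: everything follows from Proposition \ref{prop:def-spaces-and-auto}, the long Artin--Schreier sequence, and the classical Lang surjectivity. The only point deserving a small verification is the identification of group cohomology with \'etale cohomology in degree one for the locally constant sheaf $\mathcal End(\overline{\L})$, which is standard.
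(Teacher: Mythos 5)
Your proposal is correct and follows exactly the route the paper intends (the corollary is left without an explicit proof precisely because it is the combination of Proposition \ref{prop:def-spaces-and-auto}(1)--(2), the long exact Artin--Schreier-type sequence, and the surjectivity of $1-\Phi$ on the finite-dimensional spaces $H^0$ and $H^1$ from \cite[Expos\'e XXII, Proposition 1.2]{SGA7}, as already used for Corollary \ref{obstruction space}). The only point you flag, identifying $H^1(\pi_1(X,\bar x),\mathrm{ad}(\bar\rho))$ with $H^1(X_{\rm et},\mathcal End(\overline{\L}))$ in degree one, is indeed standard and consistent with the comparisons the paper uses.
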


\section{Galois action on representations of $\pi_1$ in the finite field case}\label{section:Galois-action}

\if false
Let $k$ be a field, and $k^{sep}$ a separable closure of $k$. Write $G_k=\mathrm{Gal}(k^{sep}/k)$ for the absolute Galois group of $k$. Let $X$ be a geometrically connected smooth variety over $k$. Set $\bar X=X\otimes k^{sep}$. Then each $\sigma\in G_k$ induces an automorphism
\[
1\otimes \sigma: \bar X\stackrel{\sim}{\longrightarrow} \bar X.
\]
of the scheme $\bar X$, and thus an isomorphism by the functoriality of $\pi_1$
\begin{equation}\label{eq:auto-of-geo-pi1}
\pi_1^{geo}(\bar X)\stackrel{\sim}{\longrightarrow}\pi_1^{geo}(\bar X).
\end{equation}
Because of the different choices of geometric base point, the isomorphism \eqref{eq:auto-of-geo-pi1} is only well-defined up to an inner automorphism of $\pi_1^{geo}(X)$. Let $\alpha(\sigma)$ be the image of the automorphism \eqref{eq:auto-of-geo-pi1} in the group
\[
\Out(\pi_1^{geo}(X)):=\Aut(\pi_1^{geo}(X))/\mathrm{Inn}(\pi_1^{geo}(X))
\]
of outer automorphisms of $\pi_1^{geo}(X)$. We have
\[
\alpha(\sigma\tau)=\alpha(\tau)\alpha(\sigma), \quad \forall \ \sigma,\tau\in G_k.
\]
In this way we get a group homomorphism
\[
\alpha=\alpha_{X/k}:G_k^{\rm op}\longrightarrow \Out(\pi_1(\bar X)),
\]
which we refer to as the \emph{outer Galois (right) action} of $G_k$ on $\pi_1^{geo}(X)$. Here for $G$ a group, we denote by $G^{\rm op}$ its \emph{opposite group}, i.e.,  the group whose underlying set is the same as that of $G$ but with the new multiplication $g*h = h\cdot g$, i.e. multiply as if we were in $G$ but reverse the order. It is a fundamental question in arithmetic geometry to understand this outer Galois action $\alpha_{X/k}$.

Let $R$ be a topological commutative ring. Let
\[
\rho:\pi_1^{geo}(X)\longrightarrow \mathrm{GL}_r(R)
\]
be an object in the category $\Rep_R(\pi_1^{geo}(X))$ of $R$-linear representations of $\pi_1^{geo}(X)$ in finite free $R$-modules. Let $\sigma\in G_k$, and $\tilde{\sigma}\in \Aut(\pi_1^{geo}(X))$ a lifting of $\alpha(\sigma)\in \Out(\pi_1^{geo}(X))$. Consider the representation
\[
\rho\circ \tilde{\sigma}: \pi_1^{geo}(X)\longrightarrow \mathrm{GL}_r(R).
\]
Since the difference between two liftings of $\alpha(\sigma)$ is an inner automorphism of $\pi_1^{geo}(X)$, the isomorphism class $[\rho\circ \tilde{\sigma}]$ of $\rho\circ \tilde{\sigma}$ does not depend on the choice of $\tilde{\sigma}$. By abuse of notation, let us denote this isomorphism class by $\rho\circ \alpha(\sigma)$.  Then, the outer Galois representation \eqref{eq:auto-of-geo-pi1} induces a right action of $G_k$ by $\alpha$ on (the set of isomorphism classes of)
$\Rep_R(\pi_1^{geo}(X))$ by sending $\rho$ to
$$
\rho^{\sigma}:=\rho\circ \alpha(\sigma).
$$
On the other hand, if we are given an action of $G_k$ on $R$, i.e., a continuous morphism $\phi: G_k\to \Aut(R)$, there is an induced left action of $G_k$ on $\Rep_R(\pi_1^{geo}(X))$ by mapping $\rho$ to
\begin{equation}
^{\sigma}\rho:=\beta(\sigma)\circ \rho,
\end{equation}
where $\beta(\sigma)$ is the induced Galois action of $\phi(\sigma)$ on $\GL_r(R)$. As strongly suggested by the $p$-adic Hodge theory, one hopes that, when the pair $(R,\phi)$ as above is chosen properly, there exist some nice algebro-geometric parametrization of the category $\Rep_R(\pi_1^{geo}(X))$ (in terms of certain Higgs bundles for example) and some coupling between these two Galois actions, so that we can deduce knowledge of outer Galois representation \eqref{eq:outer} from the diagonal action
\[
\Rep_R(\pi_1^{geo}(X))\ni\rho\mapsto \sigma^{*}(\rho):={}^{\sigma^{-1}}\rho ^{\sigma}
\]
of $G_k$ on representations of the geometric fundamental groups.

\newpage

\fi

Let $p$ be a prime number, and $\bar \F_p$ a fixed algebraic closure of $\F_p$, a finite field with $p$ elements. Let $k\subset \bar{\mathbb F}_p$ be a finite field. Let $n\in \mathbb Z_{>0}\cup\{\infty\}$, and $X_n$ a smooth connected scheme formal over $W_n:=W_n(k)$, with $X=X_n\otimes_{W}k$ its closed fiber. Set $\bar X_n:=X_n\hat{\otimes}_{W_n(k)}W_n(\bar{\mathbb F}_p)$. So $\bar X_1=X\otimes_k \bar{\mathbb F}_p$, and $\pi_1(\bar X_1)=\pi_1^{geo}(X)$. Moreover, every element $\sigma\in \mathrm{Gal}(\bar{\F}_p/k)$ defines two automorphisms
\[
\bar X_n\stackrel{\sim}{\longrightarrow} \bar X_n, \quad \textrm{and}\quad \bar X_1\stackrel{\sim}{\longrightarrow} \bar X_1,
\]
both written by $1\otimes \sigma$ in the sequel. The latter gives also an automorphism
\begin{equation}\label{eq:tilde-sigma}
\tilde{\sigma}=(1\otimes \sigma)_*:\pi_1^{geo}(X)\stackrel{\sim}{\longrightarrow}\pi_1^{geo}(X).
\end{equation}
which is only well-defined up to an inner automorphism of $\pi_1^{geo}(X)$ (since we ignore the choice of base points in the definition of $\pi_1^{geo}(X)$). Let $f\geq 1$ be an integer. 

\textcolor{black} {
\begin{construction}[$\rho^{\sigma}$ and ${}^{\sigma}\rho$]Let $
\rho: \pi_1^{geo}(X)\rightarrow \mathrm{GL}_r(W_n(\mathbb F_{p^f}))$
be a continuous representation of $\pi_1^{geo}(X)$. One can attach to $\rho$ another two continuous representations of $\pi_1^{geo}(X)$ as follows.  \\
\indent (1) Composing $\rho$ with $\tilde{\sigma}$ in \eqref{eq:tilde-sigma}, we get a second representation
\begin{equation*}\label{eq:primary-action}
\rho\circ \tilde{\sigma}: \pi_1^{geo}(X)\stackrel{\tilde{\sigma}}{\longrightarrow} \pi_1^{geo}(X)\stackrel{\rho}{\longrightarrow} \mathrm{GL}_r(W_n(\mathbb F_{p^f}))
\end{equation*}
of $\pi_1^{geo}(X)$, whose isomorphism class only depend on $\sigma$: recall that the difference between two choices of the automorphism $\tilde{\sigma}$ is an inner automorphism of $\pi_1^{geo}(X)$. By abuse of notation, let us denote the representation $\rho\circ \tilde{\sigma}$ above by $\rho^{\sigma}$. \\
\indent (2) On the other hand, $\sigma$ acts on $\F_{p^f}\subset\bar \F_p$ and thus on $\mathrm{GL}_r(W_n(\F_{p^f}))$, so from $\rho$ we can deduce another representation of $\pi_1^{geo}(X)$, denoted by ${}^{\sigma}\rho$ in the sequel:
\[
\pi_1^{geo}(X)\stackrel{\rho}{\longrightarrow} \mathrm{GL}_r(W_n(\mathbb F_{p^f})) \stackrel{\sigma}{\longrightarrow}\mathrm{GL}_r(W_n(\mathbb F_{p^f})).
\]
\end{construction}}

Let 
\[
\rho: \pi_1^{geo}(X)\longrightarrow \mathrm{GL}_r(W_n(\mathbb F_{p^f}))
\] 
be a representation of $\pi_1^{geo}(X)$. By Theorem \ref{equivalence in HT zero case} and Corollary \ref{cor:HT-equivalence-in-formal-case}, it corresponds to an $f$-periodic HDRF of level zero on $X_n$:
\[
(E,0, \Fil_{tr},\ldots, \Fil_{tr},\phi).
\]
We would like to express $\rho^{\sigma}, {}^{\sigma}\rho$ and ${}^{\sigma}\rho^{\sigma}:={}^{\rho}(\rho^{\rho})$ in terms of $(E,0, \Fil_{tr},\ldots, \Fil_{tr},\phi)$. Recall that, if we set $(E_1,0):=\Gr_{\Fil_{tr}}(C_n^{-1}(E,0))$, the tuple
\[
(E_1,0,\Fil_{tr},\ldots, \Fil_{tr},C_n^{-1}(\phi))
\]
is an $f$-periodic HDRF of level zero, called the \textit{shift} of $(E,0,\Fil_{tr},\ldots, \Fil_{tr},\phi)$: see the paragraph before \cite[Corollary 3.11]{LSZ}. Observe that once we shift $f$-times an $f$-periodic HDRF, we get an $f$-periodic HDRF isomorphic to the one that we start with.

\begin{proposition}\label{right action in the finite field case}
Keep the notation as above.
\begin{enumerate}
\item The $f$-periodic HDRF of level zero on $\bar X_n$ corresponding to $^{\sigma}\rho^{\sigma}$ is given by $(1\otimes \sigma)^*(E,0,\Fil_{tr},\ldots,\Fil_{tr},\phi)$.
\item \emph(see also \cite[Corollary 3.11]{LSZ}\emph) Let $\sigma_0\in \Aut(\bar{\mathbb F}_p)$ be the Frobenius automorphism (sending an element of $\bar{\mathbb F}_p$ to its $p$-th power). Then, the periodic bundle corresponding to $^{\sigma_0}\rho $ is the shift of $(E,0,\Fil_{tr},\ldots,\Fil_{tr},\phi)$.
\end{enumerate}
\end{proposition}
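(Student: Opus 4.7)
The plan is to verify both statements as consequences of naturality of the Hitchin-Simpson-type equivalence in Theorem~\ref{equivalence in HT zero case} (and Corollary~\ref{cor:HT-equivalence-in-formal-case}), reducing locally to the Frobenius-periodic vector bundle picture provided by Proposition~\ref{Frobenius-periodic bundles}.

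For~(1), I would first observe that $1\otimes\sigma\colon\bar X_n\to\bar X_n$ is a $W_n$-scheme morphism (even though it is not $W_n(\bar{\F}_p)$-linear). Both $C_n^{-1}$ and $\Gr_{\Fil_{tr}}$ are natural under such $W_n$-morphisms, so $(1\otimes\sigma)^*$ preserves the category $\mathrm{HDR}_{0,f}$. On the representation side, pullback of the associated \'etale local system corresponds on $\pi_1^{geo}(X)$ to precomposition with $\tilde\sigma$, contributing the factor $\rho^\sigma=\rho\circ\tilde\sigma$. Simultaneously, the $W_n(\F_{p^f})$-structure is inherited from the constant subring $W_n(\F_{p^f})\subset\mathcal O_{\bar X_n}$, on which $(1\otimes\sigma)^\#$ acts by $\sigma$; this forces a compensating coefficient twist that is exactly the ${}^\sigma$-prefix, producing ${}^\sigma\rho^\sigma$ in total. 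To make the coefficient twist precise, I would pass locally to Proposition~\ref{Frobenius-periodic bundles}: choose a Frobenius lifting $F$ on a small open $\mathcal U\subset\bar X_n$, form $F':=(1\otimes\sigma)^*F$ on $(1\otimes\sigma)^{-1}(\mathcal U)$, and trace Katz's $\psi_F$-invariant trivialization (over a suitable Galois cover) through the pullback; it becomes a $\psi_{F'}$-invariant trivialization with the $\F_{p^f}$-scalars identified through $\sigma$.

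For~(2), I would again pass to the Frobenius-periodic picture via Proposition~\ref{Frobenius-periodic bundles}, working Zariski-locally with a Frobenius lifting $F$. The shift of $(E,0,\Fil_{tr},\ldots,\Fil_{tr},\phi)$ then corresponds to the Frobenius-periodic bundle $(F^*E, F^*\phi_F)$. The natural $\mathcal O$-linear map $\iota\colon E\to F^*E$, $e\mapsto 1\otimes e$, sends $\phi_F$-invariant sections to $F^*\phi_F$-invariant sections and, as a morphism of underlying \'etale local systems, is an isomorphism, since absolute Frobenius is a universal homeomorphism and therefore acts trivially on $\pi_1^{geo}(X)$. However, for $a\in\F_{p^f}$ and $e\in E$ one has $\iota(ae)=F^\#(a)(1\otimes e)=a^p(1\otimes e)=\sigma_0(a)\iota(e)$, so $\iota$ is $\sigma_0$-semilinear over $W_n(\F_{p^f})$. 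Transporting the natural $W_n(\F_{p^f})$-structure on $(F^*E)^{F^*\phi_F=1}$ through $\iota$ thus yields the $\sigma_0$-twist of the structure on $E^{\phi_F=1}$, which is exactly ${}^{\sigma_0}\rho$. The gluing of local identifications follows the same pattern as in the proof of Theorem~\ref{equivalence in HT zero case}.

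The main obstacle I anticipate is the globalization in~(1): one must verify that over overlaps of small opens with different Frobenius liftings the $\sigma$-twist on coefficients is compatible with the Taylor-series gluing isomorphism from Lemma~\ref{lem:existence-of-connection}(2). This amounts to the intertwining relation $(1\otimes\sigma)^\#\circ F^\# = F'^\#\circ(1\otimes\sigma)^\#$ between the two Frobenius liftings, together with the fact that the differences $F^f(\underline t)-F'^f(\underline t)$ transform predictably under $(1\otimes\sigma)^\#$. A parallel compatibility check between Frobenius-pullback and Taylor gluing is needed for~(2). Once these routine verifications are in place, both statements follow from the patch-level calculations above.
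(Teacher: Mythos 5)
Your proposal is correct in substance, but it runs along a different track from the paper's proof, so a comparison is worthwhile. The paper proves both parts by unwinding the \emph{quasi-inverse} of the equivalence of Theorem~\ref{equivalence in HT zero case}: it writes $E$ as the descent of $M\otimes_{W_n(\F_{p^f})}\mathcal O_Y$ along a finite \'etale Galois cover $Y\to\bar X_n$ trivializing $\rho$, pulls back the cover along $1\otimes\sigma$ (for part (1)) or twists the coefficients by $\sigma_0$ (for part (2)), and compares Frobenius structures using liftings of the special form $\bar F=F\otimes\sigma_0$ with $F$ defined on small opens of $X_n$ over $W_n(k)$ --- a choice for which $\bar F$ commutes with $1\otimes\sigma$, so the compatibility you defer in (1) becomes immediate. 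You instead work with the forward functor $E\mapsto E^{\phi=1}$ and exhibit explicit semilinear comparison maps: sheaf pullback along $1\otimes\sigma$ accounts for the precomposition $\rho\circ\tilde\sigma$, while the $\sigma$- (resp.\ $\sigma_0$-) semilinearity on the constants $W_n(\F_{p^f})\subset\mathcal O_{\bar X_n}$ of the canonical maps $s\mapsto 1\otimes s$ into $(1\otimes\sigma)^*E$ (resp.\ of $\iota\colon E\to F^*E$) produces the coefficient twists ${}^{\sigma}$ and ${}^{\sigma_0}$; this makes the origin of the twists more transparent than the paper's descent bookkeeping, at the cost of having to check by hand that the local identifications are compatible with the Taylor gluing of Lemma~\ref{lem:existence-of-connection}(2). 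Those deferred checks are indeed routine, and the key input is already available in the proof of Theorem~\ref{equivalence in HT zero case}: $\phi_F$-invariant sections are flat for the canonical connection, so the series $\sigma_{F,F'}$ fixes them and your patch-level identifications glue; likewise $F'=(1\otimes\sigma)^{-1}\circ F\circ(1\otimes\sigma)$ is again a Frobenius lifting because $\Gal(\bar\F_p/\F_p)$ is abelian. Two small points to tighten: in (2) the bijectivity of $\iota$ on invariants is better justified by Katz's \'etale-local invariant bases ($\iota$ carries an invariant basis of $E$ to an invariant basis of $F^*E$, and $\sigma_0$ is bijective on $W_n(\F_{p^f})$) rather than by the universal-homeomorphism remark, which concerns the comparison $F^{-1}\L\simeq\L$ and is not what is needed for a map over the identity of $\bar X_n$; and the semilinearity should be stated integrally, namely $F^{\#}$ restricts on the constants $W_n(\F_{p^f})$ to the Witt-vector Frobenius $\sigma_0$, not merely $a\mapsto a^p$ modulo $p$.
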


\begin{proof} It suffices to consider the case where $n\in \mathbb Z_{>0}$: the case where $n=\infty$ is obtained by taking limits. Let $M:=W_{n}(\mathbb F_{p^f})^r$, endowed with the action of $\pi_1^{geo}(X)$ given by $\rho$. Let $Y\ra \bar X_n$ be a finite \'etale Galois cover such that $\rho$ factors through the quotient $\pi_1^{geo}(X)\twoheadrightarrow \Aut(Y/\bar X_n)^{\rm op}$. So $\Aut(Y/\bar X_n)$ acts on the right on $M$. From the proof of Theorem \ref{equivalence in HT zero case},  the vector bundle $E$ is the quotient of $M\otimes_{W_n(\mathbb F_{p^{f}})}\mathcal O_Y$ by the following right action of $\Aut(Y/\bar X_n)$:
\[
(m\otimes a)\cdot \gamma:= (m\cdot \gamma)\otimes (\gamma^{*}a), \quad \forall m\in M, \ a\in \mathcal O_Y, \ \gamma\in \Aut(Y/\bar X_n).
\]

(1) By the choice of the Galois cover $Y/\bar X_n$, the representation ${}^{\sigma}\rho^{\sigma}$ factors through $\pi_1^{geo}(X)\twoheadrightarrow \Aut(Y'/\bar X_n)^{\rm op}$, with $Y'\ra \bar X_n$ the base change of $Y\ra \bar X_n$ by the morphism $\sigma:\bar X_n\ra \bar X_n$. Let $(E',0,\Fil_{tr},\ldots, \Fil_{tr},\phi')$ be the $f$-periodic HDRF of level zero corresponding to ${}^{\sigma}\rho^{\sigma}$. Then $E'$ is the quotient of
\[
\left(M\otimes_{W_n(\mathbb F_{p^f}),\sigma}W_n(\mathbb F_{p^f})\right)\otimes_{W_n(\mathbb F_{p^f})}\mathcal O_{Y'}
\]
by a similar right action of $\Aut(Y'/\bar X_n)$ as above. So $E'=\sigma^{*}E$. Furthermore, let $U\subset X_n$ be an open subset of $X_n$ endowed with a Frobenius lifting $F=F_U$, then $\bar F:=F\otimes \sigma_0$ is a Frobenius lifting on $\bar U:=U\otimes_{W_n(k)}W_n(\bar F_p)$, where $\sigma_0$ denotes the Frobenius on $W_n(\bar F_p)$. Write $Y_U$ (resp. $Y_U'$) be the open of $Y$ (resp. of $Y'$) above $\bar U\subset \bar X_n$. Since $Y_U\ra \bar U$ (resp. $Y_U'\ra \bar U$) is finite \'etale, the Frobenius lifting $\bar F$ on $\bar U$ lifts uniquely to $Y_U$ (resp. to $Y_U'$). By the uniqueness, the horizontal morphisms in the diagram below are compatible with Frobenius on both sides:
\[
\xymatrix{Y_U'\ar[r]\ar[d] & Y_U\ar[d] \\ \bar U\ar[r]^{\sigma} & \bar U}
\]
Thus the Frobenius periodic vector bundle $(E_U',\phi'_U)$ is the pullback by $1\otimes \sigma$ of $(E_U,\phi_U)$. Hence $(E',0,\Fil_{tr},\ldots, \Fil_{tr},\phi')$ is the pullback of $(E,0,\Fil_{tr},\ldots, \Fil_{tr},\phi)$.

(2): The proof is similar to the one above. Let $(E'',0,\Fil_{tr},\ldots, \Fil_{tr},\phi'')$ be the HDRF of level zero corresponding to ${}^{\sigma_0}\rho$. Then $E''$ is the quotient of
\[
\left(M\otimes_{W_n(\mathbb F_{p^f}),\sigma_0}W_n(\mathbb F_{p^f})\right)\otimes_{W_n(\mathbb F_{p^f})}\mathcal O_{Y}
\]
by the induced right action of $\Aut(Y/\bar X_n)$. Let $U\subset X_n$ be a small open subset endowed with a Frobenius lifting $F$. Thus $\bar U=U\otimes_{W_n(k)}W_n(\bar F_p)$ is equipped with the Frobenius lifting $\bar F=F\otimes \sigma_0$. Like above, the open subset $Y_U$ of $Y$ above $\bar U\subset \bar X_n$ has also a Frobenius lifting, written still by $\bar F$. Then, we have a natural isomorphism of $\mathcal O_Y$-modules
\[
\left(M\otimes_{W_n(\mathbb F_{p^f}),\sigma_0}W_n(\mathbb F_{p^f})\right)\otimes_{W_n(\mathbb F_{p^f})}\mathcal O_{Y}\stackrel{\sim}{\longrightarrow}\left(M\otimes_{W_n(\mathbb F_{p^f})}\mathcal O_{Y_U}\right)\otimes_{\mathcal O_{Y_U},\bar F}\mathcal O_{Y_U}.
\]
In particular, $E''|_{U}\stackrel{\sim}{\ra}\bar F^*(E|_U)$. If $F'$ is a second Frobenius lifting on $U$, with $\bar F'=F\otimes \sigma_0$ the induced Frobenius lifting on $\bar U$, we obtain similarly an isomorphism $E''|_U\stackrel{\sim}{\ra} \bar {F'}^*(E|_U)$. The resulting isomorphism $F^*(E|_U)\stackrel{\sim}{\ra} {F'}^*(E|_{U})$ is the one defined by using the canonical connection on $E$. In other words, there is a natural isomorphism $E''\stackrel{\sim}{\ra} E_1$ of vector bundle on $\bar X_n$ (here we use the notations in the diagram \eqref{eq:HDRF-over-Xn}). Moreover, if we denote by $\phi_F$ the Frobenius on $E|_U$ relative to $\bar F$, then the corresponding Frobenius periodic vector bundle $(E_U'',\phi_{F}'')$ is the pullback by $\bar F$ of $(E_U,\phi_F)$, from where one deduces that the HDRF $(E'',0,\Fil_{tr},\ldots, \Fil_{tr},\phi'')$ is the same as $(E_1,0,\Fil_{tr},\ldots, \Fil_{tr},C_n^{-1}(\phi))$, that is, the shift of $(E,0,\Fil_{tr},\ldots,\Fil_{tr},\phi)$.
\end{proof}


\begin{corollary} \label{main result} Keep the notations as above. Assume that $k\subset \bar{\mathbb F}_p$ is a finite field of $q=p^m$ elements. Let $(E,0,\Fil_{tr},\ldots, \Fil_{tr},\phi)$ be an $f$-periodic HDRF of level zero on $X_n$, with $\rho$ the corresponding $W_{n}(\mathbb F_{p^f})$-representation of $\pi_1^{geo}(X)$. Let $\sigma\in \mathrm{Gal}(\bar{\mathbb F}_p/k)$ be the topological generator sending $a\in \bar{\mathbb F}_p$ to $a^{q}$.
Then, the periodic HDRF corresponding to $\rho^{\sigma}$ is $(1\otimes\sigma)^*(E,0,\Fil_{tr},\ldots,\Fil_{tr},\phi)$ shifted $Nf-m$ times, where $N\in \mathbb N$ is \textcolor{black}{any} integer such that $Nf-m\geq 0$: \textcolor{black}{recall that once we shift $f$-times an $f$-periodic HDRF, we get an $f$-periodic HDRF isomorphic to the one that we start with.}
\end{corollary}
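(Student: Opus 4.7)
The plan is to deduce the corollary from Proposition \ref{right action in the finite field case} by writing $\rho^{\sigma}$ as a composition of the two Galois operations whose effects on HDRFs we already understand, namely
\[
\rho^{\sigma} \;=\; {}^{\sigma^{-1}}\!\bigl({}^{\sigma}\rho^{\sigma}\bigr),
\]
and then exploiting the $f$-periodicity of the shift operation to convert a negative number of shifts into a non-negative one.

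First I would compute the HDRF attached to ${}^{\sigma}\rho^{\sigma}$. Since this is precisely the type of diagonal Galois twist treated in Proposition \ref{right action in the finite field case}(1), its associated $f$-periodic HDRF of level zero is
\[
(1\otimes\sigma)^{*}(E,0,\Fil_{tr},\ldots,\Fil_{tr},\phi).
\]
Call this HDRF $H'$ and write $H$ for the HDRF attached to $\rho$ itself.

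Next I would analyse the effect of the left operation ${}^{\sigma}(-)$ on HDRFs. Writing $\sigma=\sigma_{0}^{m}$ where $\sigma_{0}$ is the arithmetic Frobenius $a\mapsto a^{p}$, Proposition \ref{right action in the finite field case}(2) applied iteratively gives that ${}^{\sigma}\tau$ corresponds to the HDRF obtained from the HDRF of $\tau$ by shifting $m$ times, for any $\tau\in\Rep_{W_{n}(\F_{p^{f}})}(\pi_{1}^{geo}(X))$. Applying this to $\tau=\rho^{\sigma}$, whose HDRF I denote $H''$, we obtain
\[
H' \;=\; \text{HDRF}({}^{\sigma}\rho^{\sigma}) \;=\; \text{shift}^{m}(H'').
\]

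Finally I would invoke the $f$-periodicity of the shift: shifting an $f$-periodic HDRF $f$ times produces a canonically isomorphic HDRF. Hence for any integer $N$ with $Nf-m\geq 0$, applying $\text{shift}^{Nf-m}$ to both sides of the equation above yields
\[
\text{shift}^{Nf-m}(H') \;=\; \text{shift}^{Nf}(H'') \;\cong\; H'',
\]
which is exactly the claim, once $H'$ is substituted by $(1\otimes\sigma)^{*}(E,0,\Fil_{tr},\ldots,\Fil_{tr},\phi)$. The only delicate point I foresee is checking that the iteration of Proposition \ref{right action in the finite field case}(2) is legitimate, i.e.\ that the shift operation commutes with the reindexing of $W_{n}(\F_{p^{f}})$-structure induced by successive applications of $\sigma_{0}$; but this is formal, because the construction of $^{\sigma_{0}}(-)$ only changes the scalar action on $M$ and is compatible with composition of Galois automorphisms. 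Everything else is a direct combination of the two parts of the preceding proposition.
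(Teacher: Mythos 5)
Your proposal is correct and is essentially the argument the paper intends: the corollary is deduced directly by combining both parts of Proposition \ref{right action in the finite field case} (computing the HDRF of ${}^{\sigma}\rho^{\sigma}$ in two ways, with $\sigma=\sigma_0^m$) and then using the $f$-periodicity of the shift to convert the $m$-fold ``un-shift'' into $Nf-m$ forward shifts. The paper gives no separate proof of Corollary \ref{main result}, and your reasoning, including the formal compatibility of ${}^{\sigma_0}(-)$ with composition needed to iterate part (2), fills in exactly the intended steps.
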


\begin{remark}\label{rem:anabelian-approach}Let $k$ be a field, and $k^{sep}$ a separable closure of $k$. Let $X$ be a geometrically connected smooth variety over $k$. Set $\bar X=X\otimes k^{sep}$. As in the beginning of this section, each $\sigma\in \mathrm{Gal}(k^{sep}/k)$ induces an automorphism
\begin{equation*}
\pi_1^{geo}(\bar X)\stackrel{\sim}{\longrightarrow}\pi_1^{geo}(\bar X),
\end{equation*}
whose image, written $\alpha(\sigma)$, in the group
\[
\Out(\pi_1^{geo}(X)):=\Aut(\pi_1^{geo}(X))/\mathrm{Inn}(\pi_1^{geo}(X))
\]
of outer automorphisms of $\pi_1^{geo}(X)$ is well-defined. We have
\[
\alpha(\sigma\tau)=\alpha(\tau)\alpha(\sigma), \quad \forall \ \sigma,\tau\in G_k.
\]
In this way we get a group homomorphism
\[
\alpha=\alpha_{X/k}:G_k^{\rm op}\longrightarrow \Out(\pi_1(\bar X)),
\]
which we refer to as the \emph{outer Galois (right) action} of $G_k$ on $\pi_1^{geo}(X)$. Here for $G$ a group, we denote by $G^{\rm op}$ its \emph{opposite group}, i.e.,  the group whose underlying set is the same as that of $G$ but with the new multiplication $g*h = h\cdot g$, i.e. multiply as if we were in $G$ but reverse the order. It is a fundamental question in arithmetic geometry to understand this outer Galois action $\alpha_{X/k}$. The reformulation of the Galois action on the representations of geometric fundamental groups in terms of HDRFs might shed light on the study of the outer Galois representations of algebraic varieties defined over finite fields. \end{remark}

\end{document}